\documentclass[12pt]{article}
\usepackage{makeidx}
\usepackage{amsfonts}
\usepackage{amssymb,indentfirst}
\usepackage{amsmath}
\usepackage{lineno}
\usepackage[numbers,sort&compress]{natbib}
\usepackage[colorlinks,linkcolor=blue,citecolor=blue,urlcolor=blue]{hyperref}
\usepackage{graphicx,indentfirst,tabularx}

\newtheorem{theorem}{Theorem}

\newtheorem{definition}[theorem]{Definition}
\newtheorem{example}[theorem]{Example}

\newtheorem{lemma}[theorem]{Lemma}

\newtheorem{remark}[theorem]{Remark}

\newenvironment{proof}[1][Proof]{\noindent\textbf{#1.} }{\ \rule{0.5em}{0.5em}}
\input{tcilatex}
\begin{document}

\title{The Taylor Integral and a Generalization of the Discrete Fourier
Transform}
\author{Athanasios Christou Micheas \thanks{%
Department of Statistics, University of Missouri, 146 Middlebush Hall,
Columbia, MO 65211-6100, USA, email: micheasa@missouri.edu}}
\maketitle

\begin{abstract}
We propose a new integral based on Taylor measures, study its properties
extensively, and we illustrate that it includes many concepts from
mathematics as special cases. In particular, the new integral emerges as a
generalization of the discrete Fourier transform, and we identify general
conditions for it to be invertible when applied to any real or complex
sequence. Applications to the mathematical sciences are also presented.
\end{abstract}

\textbf{Keywords}: Computer Graphics, Cryptography, Difference Equations;
Discrete Fourier Transform; Discrete Taylor Transformation; Taylor Measure;
Taylor Integral; Vandermonde matrix inversion

\textbf{MSC Classification}: Primary: 28A25, 65T50, Secondary: 40A05, 39A05,
94A60, 68U05

\section{Introduction}

The concept of a Taylor measure was defined and studied in \cite%
{micheas2025taylor}. We denote the collection of all signed, finite Taylor
measures by%
\begin{equation}
\mathcal{T}^{\mathcal{F}}=\left\{ T_{\gamma ,\mathbf{a}}:T_{\gamma ,\mathbf{a%
}}(B)=\sum\limits_{n\in B}a_{n}\frac{\gamma ^{n}}{n!}\text{, }B\in \mathcal{B%
}(\mathbb{N}),\text{ }a_{n},\gamma \in \mathbb{M},\text{ with }T_{\gamma ,%
\mathbf{a}}(\mathbb{N})<+\infty \right\} .  \label{FiniteTaylorMeasureSpace}
\end{equation}%
where $a_{n},\gamma $ satisfy one of the conditions of Theorem 1 in \cite%
{micheas2025taylor} to guarantee finiteness, and we write $\mathbb{M}$ to
denote the real numbers $\Re $ or the complex numbers $\mathbb{C}$, and $%
\mathcal{B}(\mathbb{M})$ to denote the Borel sets over $\mathbb{M},$ defined
via the usual norms. Properties of the space $\mathcal{T}^{\mathcal{F}}$ and
first applications were presented in \cite{micheas2025taylor}. In \cite%
{micheas2026multivariate}, theory and applications of the Taylor measure
were presented to functional analysis.

In this paper we exemplify Taylor measures by defining and investigating
integrals based on integrators from $\mathcal{T}^{\mathcal{F}},$ and we use
the generic term Taylor integral (TI) to describe all the constructions
presented below. The TI is shown to be a unifying framework that contains
many important concepts from mathematics as special cases, including
Dirichlet's $\eta $ function, discrete Fourier transform, Euler's totient
function, generating functions, hypergeometric series, Riemann's $\zeta $
function, and more. Furthermore, even though most of the exposition in the
paper involves real numbers it is straightforward to give all the
definitions, proofs and examples, in terms of complex numbers.

The paper proceeds as follows; in Section 2 we introduce various forms of
the Taylor integral and study its properties extensively. A characterization
of absolutely convergent sequences via TIs is also provided. In Section 3 we
introduce and study the discrete Taylor transformation (DTT), a special case
of the TI, as well as the inverse discrete Taylor transformation, and give
examples in different mathematical contexts. This TI emerges as a unifying
framework of several mathematical tools, such as the discrete Fourier
transform, and ordinary or exponential generating functions. Importantly, we
prove that the discrete Taylor transformation can uniquely identify $\mathbb{%
M}$-valued sequences and in addition, we identify the conditions required
for the DTT to be invertible. An alternative method to Vandermonde matrix
inversion is also provided, via Monte Carlo. In Section 4, we illustrate how
to use the TI to study significant applications in mathematics, including
cryptography and computer graphics. Concluding remarks are given in the last
section.

\section{Construction of the Taylor Integral}

We explore several constructions of the Taylor integral, including
integration with respect to specific choices of signed Taylor measure, that
yield important special cases. In what follows, we will assume that the
integrands are real or complex sequences $s_{n},$ $n\in \mathbb{N},$ and we
write $\mu _{p}$ to denote Lebesgue measure defined over the measurable
space $(\mathbb{M},\mathcal{B}(\mathbb{M}))$. We will write $|z|$ to denote
absolute value if $z\in \Re $ or the complex modulus ($|z|=z\overline{z},$
where $\overline{z}$ denotes the conjugate of the complex number $z)$ when $%
z\in \mathbb{C}.$

Take $\mathbb{M}=\mathbb{R},$ and consider any finite, signed Taylor measure
$T_{\gamma ,\mathbf{a}}\in \mathcal{T}^{\mathcal{F}},$ $\gamma \in \Re ,$ $%
\gamma \neq 0,$ $\mathbf{a}=$ $(a_{0},a_{1},...)\in \Re ^{\infty },$ and any
real sequence $s_{n}.$ Using Jordan decomposition theorem (e.g., \cite%
{micheas2018theory}, Theorem 3.8) for the signed measure $T_{\gamma ,\mathbf{%
a}}$, there exist two mutually singular measures denoted by $T_{\gamma ,%
\mathbf{a}}^{+}$ and $T_{\gamma ,\mathbf{a}}^{-}$ such that%
\begin{equation}
T_{\gamma ,\mathbf{a}}=T_{\gamma ,\mathbf{a}}^{+}-T_{\gamma ,\mathbf{a}}^{-},
\label{SignedPlusMinus}
\end{equation}%
with this decomposition being unique. More precisely, if $\left\{
A^{+},A^{-}\right\} $ is a Hahn decomposition (e.g., \cite{micheas2018theory}%
, Theorem 3.7) of $T_{\gamma ,\mathbf{a}},$ we have%
\begin{eqnarray*}
T_{\gamma ,\mathbf{a}}^{+}(B) &=&T_{\gamma ,\mathbf{a}}(B\cap A^{+}),\text{
and} \\
T_{\gamma ,\mathbf{a}}^{-}(B) &=&-T_{\gamma ,\mathbf{a}}(B\cap A^{-}),
\end{eqnarray*}%
with $0\leq T_{\gamma ,\mathbf{a}}^{+}(B),T_{\gamma ,\mathbf{a}%
}^{-}(B)<+\infty ,$ $\forall B\in \mathcal{B}(\mathbb{N}).$ We begin with
the definition of the Taylor integral with respect to a Taylor measure.

\begin{definition}[Taylor Integral]
\label{TaylorMeasureIntegral}Let $T_{\gamma ,\mathbf{a}}\in \mathcal{T}^{%
\mathcal{F}}$, and consider a sequence $s_{n}:\mathbb{N}\rightarrow \Re ,$
with $\mathbf{s}=[s_{0},$ $s_{1},$ $...].$ The positive Taylor integral is
defined by
\begin{equation}
P_{\gamma ,\mathbf{a}}^{\mathbf{s},B}=\int\limits_{B}s_{n}T_{\gamma ,\mathbf{%
a}}^{+}(dn):=\sum\limits_{n\in B}s_{n}T_{\gamma ,\mathbf{a}}^{+}(\{n\}),
\label{PosInt1}
\end{equation}%
and the negative Taylor integral is defined by%
\begin{equation}
N_{\gamma ,\mathbf{a}}^{\mathbf{s},B}=\int\limits_{B}s_{n}T_{\gamma ,\mathbf{%
a}}^{-}(dn):=\sum\limits_{n\in B}s_{n}T_{\gamma ,\mathbf{a}}^{-}(\{n\}),
\label{NegInt1}
\end{equation}%
for all $B\in \mathcal{B}(\mathbb{N}),$ with $T_{\gamma ,\mathbf{a}%
}^{+},T_{\gamma ,\mathbf{a}}^{-}$ satisfying Equation (\ref{SignedPlusMinus}%
). The Taylor integral of $\mathbf{s}$\ with respect to $T_{\gamma ,\mathbf{a%
}}$ is defined by%
\begin{equation}
I_{\gamma ,\mathbf{a}}^{\mathbf{s},B}=\int\limits_{B}s_{n}T_{\gamma ,\mathbf{%
a}}(dn):=P_{\gamma ,\mathbf{a}}^{\mathbf{s},B}-N_{\gamma ,\mathbf{a}}^{%
\mathbf{s},B}=\sum\limits_{n\in B}s_{n}\left[ T_{\gamma ,\mathbf{a}%
}^{+}(\{n\})-T_{\gamma ,\mathbf{a}}^{-}(\{n\})\right] =\sum\limits_{n\in
B}s_{n}T_{\gamma ,\mathbf{a}}(\{n\}),  \label{Int1signedmeas}
\end{equation}%
and we say that the integral $I_{\gamma ,\mathbf{a}}^{\mathbf{s},B}$ exists
when one of the integrals $P_{\gamma ,\mathbf{a}}^{\mathbf{s},B}$ or $%
N_{\gamma ,\mathbf{a}}^{\mathbf{s},B}$ is finite, or when both are infinite
they are such that we do not have the indeterminate form $\infty -\infty $.
The sequence $\mathbf{s}$ is called integrable with respect to $T_{\gamma ,%
\mathbf{a}},$ if%
\begin{equation*}
\int\limits_{\mathbb{N}}|s_{n}|\left\Vert T_{\gamma ,\mathbf{a}}\right\Vert
(dn)<+\infty ,
\end{equation*}%
where $\left\Vert T_{\gamma ,\mathbf{a}}\right\Vert =T_{\gamma ,\mathbf{a}%
}^{+}+T_{\gamma ,\mathbf{a}}^{-},$ the total variation, and we write $%
\mathbf{s}$ is integrable $[T_{\gamma ,\mathbf{a}}]$ (with respect to $%
T_{\gamma ,\mathbf{a}}$).\newline
When $B$ is countably infinite, the sum of Equation (\ref{Int1signedmeas})
converges under similar assumptions required for the Taylor measure $%
T_{\gamma ,\mathbf{a}}(B)$ to be finite; $I_{\gamma ,\mathbf{a}}^{\mathbf{s}%
,B}$ is finite provided that one of the following conditions hold:\newline
a) $s_{n}a_{n}$ are uniformly bounded, i.e., $\left\vert
s_{n}a_{n}\right\vert \leq M,$ $\forall n\in \mathbb{N}$, for some $M>0,$ or%
\newline
b) $s_{n}a_{n}$ are asymptotically equivalent to $Ma^{n}$, i.e., $%
s_{n}a_{n}\thicksim Ma^{n},$ for some $M,a\in \Re $, or\newline
c) if $s_{n}a_{n}\neq 0,$ $\forall n>n_{0}\in \mathbb{N},$ then $\underset{%
n\rightarrow +\infty }{\lim }\frac{\left\vert \gamma
s_{n+1}a_{n+1}\right\vert }{(n+1)\left\vert s_{n}a_{n}\right\vert }<1,$
guarantees absolute convergence (and therefore convergence).
\end{definition}

The Taylor integral of $\mathbf{s}$\ with respect to $T_{\gamma ,\mathbf{a}%
}, $ as defined by (\ref{Int1signedmeas}), satisfies%
\begin{equation}
I_{\gamma ,\mathbf{a}}^{\mathbf{s},B}=\sum\limits_{n\in B}s_{n}T_{\gamma ,%
\mathbf{a}}(\{n\})=\sum\limits_{n\in B}s_{n}a_{n}\frac{\gamma ^{n}}{n!}%
=T_{\gamma ,\mathbf{s}\cdot \mathbf{a}}(B)=I_{\gamma ,\mathbf{s}\cdot
\mathbf{a}}^{\mathbf{1},B},  \label{Int1}
\end{equation}%
for all $B\in \mathcal{B}(\mathbb{N}),$ where $\mathbf{s}\cdot \mathbf{a}%
=[s_{0}a_{0},s_{1}a_{1},...]$ is interpreted as dimension-wise
multiplication of the two sequences $\mathbf{s}$ and $\mathbf{a},$ and $%
\mathbf{1}=[1,1,...]$ denotes the unit vector$.$ We write $\left\vert
\mathbf{s}\right\vert $ for the sequence of absolute values $(\left\vert
s_{0}\right\vert ,\left\vert s_{1}\right\vert ,...).$

For the case $\mathbb{M}=\mathbb{C},$ a complex measure does not generally
admit a Jordan decomposition into two positive measures, so instead, one
would write first
\begin{equation*}
T_{\gamma ,\mathbf{a}}=\func{Re}T_{\gamma ,\mathbf{a}}+i\func{Im}T_{\gamma ,%
\mathbf{a}},
\end{equation*}%
where $\func{Re}T_{\gamma ,\mathbf{a}}$ and $\func{Im}T_{\gamma ,\mathbf{a}}$
are real signed measures, and then define the TI by%
\begin{equation*}
I_{\gamma ,\mathbf{a}}^{\mathbf{s},B}=\int\limits_{B}s_{n}dT_{\gamma ,%
\mathbf{a}}=\int\limits_{B}s_{n}d\left( \func{Re}T_{\gamma ,\mathbf{a}%
}\right) +i\int\limits_{B}s_{n}d\left( \func{Im}T_{\gamma ,\mathbf{a}%
}\right) .
\end{equation*}%
or directly, using the aforementioned discrete setting, by
\begin{equation}
I_{\gamma ,\mathbf{a}}^{\mathbf{s},B}=\int\limits_{B}s_{n}dT_{\gamma ,%
\mathbf{a}}=\sum_{n\in B}s_{n}a_{n}\frac{\gamma ^{n}}{n!},  \label{TIform}
\end{equation}%
provided
\begin{equation*}
\sum_{n\in B}|s_{n}||a_{n}|\frac{|\gamma |^{n}}{n!}<\infty ,
\end{equation*}%
where $\left\vert \cdot \right\vert $ denotes modulus in this case.

\subsection{Properties of the integral}

Note that since $T_{\gamma ,\mathbf{a}}$ is a signed measure, it does not
have all the properties we are familiar with from standard integral
definitions (see \cite{micheas2018theory}, Remark 3.16), e.g., the usual
ordering (monotonicity) property does not hold in general, and as a result,
standard convergence results such as Monotone Convergence Theorem (MCT) or
Fatou's Lemma are invalid. However, important properties such as linearity
still hold and can be employed as needed. We collect some of these
properties of the new integral next.

\begin{remark}[Properties of $I_{\protect\gamma ,\mathbf{a}}^{s,B}$]
\label{PropsTMI}We present some of the important properties of the Taylor
integral. In what follows, take any sequences $s_{n},f_{n},g_{n}:$ $\mathbb{N%
}\rightarrow \Re ,$ with $\mathbf{s}=(s_{0},s_{1},...),$ $\mathbf{f}%
=(f_{0},f_{1},...)$ and $\mathbf{g}=(g_{0},g_{1},...).$

\begin{enumerate}
\item No Monotonicity: Note that this property is violated, i.e., if $0\leq
f_{n}\leq g_{n},$ $\forall n\in \mathbb{N},$ it does not necessarily mean
that $I_{\gamma ,\mathbf{a}}^{\mathbf{f},B}\leq I_{\gamma ,\mathbf{a}}^{%
\mathbf{g},B}.$

\item Linearity: For any real constants $\lambda ,\xi \in \Re ,$ it is easy
to see that%
\begin{eqnarray*}
I_{\gamma ,\mathbf{a}}^{\lambda \mathbf{f}+\xi \mathbf{g},B}
&=&\int\limits_{B}(\lambda f_{n}+\xi g_{n})T_{\gamma ,\mathbf{a}%
}(dn)=\sum\limits_{n\in B}(\lambda f_{n}+\xi g_{n})a_{n}\frac{\gamma ^{n}}{n!%
} \\
&=&\lambda \sum\limits_{n\in B}f_{n}a_{n}\frac{\gamma ^{n}}{n!}+\xi
\sum\limits_{n\in B}g_{n}a_{n}\frac{\gamma ^{n}}{n!}=\lambda I_{\gamma ,%
\mathbf{a}}^{\mathbf{f},B}+\xi I_{\gamma ,\mathbf{a}}^{\mathbf{g},B},
\end{eqnarray*}%
and therefore we have linearity of integral.

\item Additivity: For any disjoint sets $A,B\in \mathcal{B}(\mathbb{N}),$ we
have by definition $I_{\gamma ,\mathbf{a}}^{\mathbf{s},A\cup B}=I_{\gamma ,%
\mathbf{a}}^{\mathbf{s},A}+I_{\gamma ,\mathbf{a}}^{\mathbf{s},B}.$

\item Triangle Inequality and Bounds: If $\int\limits_{B}s_{n}T_{\gamma ,%
\mathbf{a}}(dn)$ is defined, then%
\begin{eqnarray*}
\left\vert \int\limits_{B}s_{n}T_{\gamma ,\mathbf{a}}(dn)\right\vert
&=&\left\vert P_{\gamma ,\mathbf{a}}^{\mathbf{s},B}-N_{\gamma ,\mathbf{a}}^{%
\mathbf{s},B}\right\vert <\left\vert P_{\gamma ,\mathbf{a}}^{\mathbf{s}%
,B}\right\vert +\left\vert N_{\gamma ,\mathbf{a}}^{\mathbf{s},B}\right\vert
\\
&<&\sum\limits_{n\in B}|s_{n}|T_{\gamma ,\mathbf{a}}^{+}(\{n\})+\sum%
\limits_{n\in B}|s_{n}|T_{\gamma ,\mathbf{a}}^{-}(\{n\})=\int%
\limits_{B}|s_{n}|\left\Vert T_{\gamma ,\mathbf{a}}\right\Vert (dn),
\end{eqnarray*}%
and therefore,%
\begin{equation*}
\left\vert \int\limits_{B}s_{n}T_{\gamma ,\mathbf{a}}(dn)\right\vert \leq
\int\limits_{B}|s_{n}|\left\Vert T_{\gamma ,\mathbf{a}}\right\Vert (dn),
\end{equation*}%
for any $B\in \mathcal{B}(\mathbb{N})$. The usual bounds for the absolute
value of $I_{\gamma ,\mathbf{a}}^{\mathbf{s},B}$ are given by%
\begin{equation*}
\left\vert \left\vert P_{\gamma ,\mathbf{a}}^{\mathbf{s},B}\right\vert
-\left\vert N_{\gamma ,\mathbf{a}}^{\mathbf{s},B}\right\vert \right\vert
\leq \left\vert I_{\gamma ,\mathbf{a}}^{\mathbf{s},B}\right\vert \leq
\left\vert P_{\gamma ,\mathbf{a}}^{\mathbf{s},B}\right\vert +\left\vert
N_{\gamma ,\mathbf{a}}^{\mathbf{s},B}\right\vert .
\end{equation*}

\item Almost Everywhere: For any set $A\in \mathcal{B}(\mathbb{N}),$ if $%
\left\Vert T_{\gamma ,\mathbf{a}}\right\Vert (A^{c})=T_{\gamma ,\mathbf{a}%
}^{+}(A^{c})+T_{\gamma ,\mathbf{a}}^{-}(A^{c})=0,$ we say that $T_{\gamma ,%
\mathbf{a}}$ vanishes over $A^{c}$ or that $A$ a.e. $[\left\Vert T_{\gamma ,%
\mathbf{a}}\right\Vert ]$ ($A$ almost everywhere with respect to $\left\Vert
T_{\gamma ,\mathbf{a}}\right\Vert )$. Here one needs to use total variation,
otherwise the measure space would not be complete, i.e., the requirement $%
T_{\gamma ,\mathbf{a}}(A^{c})=T_{\gamma ,\mathbf{a}}^{+}(A^{c})-T_{\gamma ,%
\mathbf{a}}^{-}(A^{c})=0,$ for $A$ a.e. $[T_{\gamma ,\mathbf{a}}]$ is not
appropriate, since cancellation can make the signed mass of a set equal to
zero even though the measure is nonzero on subsets of that set.

\item Zero sequence: If $s_{n}=0$ a.e. $[\left\Vert T_{\gamma ,\mathbf{a}%
}\right\Vert ],$ i.e., $\left\Vert T_{\gamma ,\mathbf{a}}\right\Vert
(\{n:s_{n}\neq 0\})=0,$ then $\int\limits_{B}s_{n}\left\Vert T_{\gamma ,%
\mathbf{a}}\right\Vert (dn)=0,$ for any $B\in \mathcal{B}(\mathbb{N}).$ More
importantly, for the other direction, if $\int\limits_{A}s_{n}\left\Vert
T_{\gamma ,\mathbf{a}}\right\Vert (dn)=0,$ $\forall A\in \mathcal{B}(\mathbb{%
N}),$ then $s_{n}=0$ a.e. $[\left\Vert T_{\gamma ,\mathbf{a}}\right\Vert ].$
In other words, if $I_{\gamma ,\mathbf{a}}^{\mathbf{p},A}=I_{\gamma ,\mathbf{%
a}}^{\mathbf{q},A},$ $\forall A\in \mathcal{B}(\mathbb{N}),$ then $\mathbf{p}%
=\mathbf{q}$ a.e. $[\left\Vert T_{\gamma ,\mathbf{a}}\right\Vert ].$

\item Equality a.e.: If $f_{n}=g_{n},$ a.e. $[T_{\gamma ,\mathbf{a}}],$ $%
\forall n\in \mathbb{N},$ and $I_{\gamma ,\mathbf{a}}^{\mathbf{f},B}$
exists, then $I_{\gamma ,\mathbf{a}}^{\mathbf{g},B}$ exists, and $I_{\gamma ,%
\mathbf{a}}^{\mathbf{f},B}=I_{\gamma ,\mathbf{a}}^{\mathbf{g},B}.$

\item Positivity: If $T_{\gamma ,\mathbf{a}}\geq 0$ and$\ s_{n}\geq 0,$ $%
\forall n\in \mathbb{N},$ and $T_{\gamma ,\mathbf{a}}(\{n:s_{n}>0\})>0,$
then $I_{\gamma ,\mathbf{a}}^{\mathbf{s},B}>0,$ provided the integral exists.

\item Finiteness: If $T_{\gamma ,\mathbf{a}}\geq 0,$ $s_{n}\geq 0,$ $\forall
n\in \mathbb{N},$ and $I_{\gamma ,\mathbf{a}}^{\mathbf{s},\mathbb{N}}<\infty
,$ then $s_{n}<\infty $ a.e. $[T_{\gamma ,\mathbf{a}}].$ To see this write%
\begin{eqnarray*}
T_{\gamma ,\mathbf{a}}(\{s_{n} &=&\infty \})=T_{\gamma ,\mathbf{a}}\left(
\bigcap\limits_{k\geq 1}\{s_{n}\geq k\}\right)
=\int\limits_{\bigcap\limits_{k\geq 1}\{s_{n}\geq k\}}1T_{\gamma ,\mathbf{a}%
}(dn) \\
&\leq &\int\limits_{\{s_{n}\geq k\}}1T_{\gamma ,\mathbf{a}}(dn)=\int I\left(
\frac{s_{n}}{k}\geq 1\right) T_{\gamma ,\mathbf{a}}(dn)\leq \frac{1}{k}\int
s_{n}T_{\gamma ,\mathbf{a}}(dn)=\frac{1}{k}I_{\gamma ,\mathbf{a}}^{\mathbf{s}%
,\mathbb{N}}\rightarrow 0,
\end{eqnarray*}%
as $k\rightarrow \infty .$

\item Integral support: Assume that $s_{n}\geq 0,$ $\forall n\in \mathbb{N},$
and $\left\Vert T_{\gamma ,\mathbf{a}}\right\Vert (A^{c})=0.$ Then $%
I_{\gamma ,\mathbf{a}}^{\mathbf{s},\mathbb{N}}=I_{\gamma ,\mathbf{a}}^{%
\mathbf{s},A},$ by definition, since $\left\Vert T_{\gamma ,\mathbf{a}%
}\right\Vert $ vanishes outside $A$.

\item Dominated Convergence Theorem (DCT): Assume that a sequence of
sequences $s_{n}^{(k)}:$ $\mathbb{N}\rightarrow \Re ,$ with $\mathbf{s}%
^{(k)}=(s_{0}^{(k)},s_{1}^{(k)},...),$ $\forall k\in \mathbb{N},$ converges
to a sequence $s_{n}$, i.e., $s_{n}^{(k)}\rightarrow s_{n},$ as $%
k\rightarrow \infty ,$ pointwise in $n\in \mathbb{N}$, and that there exists
a sequence $\mathbf{g}$ that is integrable $[T_{\gamma ,\mathbf{a}}],$ such
that $\left\vert s_{n}^{(k)}\right\vert \leq g_{n},$ $\forall k\in \mathbb{N}%
.$ Then $\mathbf{s}$ is integrable $[T_{\gamma ,\mathbf{a}}],$ and%
\begin{equation*}
\underset{k\rightarrow \infty }{\lim }\int\limits_{B}s_{n}^{(k)}T_{\gamma ,%
\mathbf{a}}(dn)=\int\limits_{B}s_{n}T_{\gamma ,\mathbf{a}}(dn).
\end{equation*}%
The proof is immediate by definition and an appeal to the usual DCT (twice)
for the integrals $P_{\gamma ,\mathbf{a}}^{\mathbf{s},B}$ and $N_{\gamma ,%
\mathbf{a}}^{\mathbf{s},B}$.

\item Dimension-wise multiplication: From Equation (\ref{Int1}), we have%
\begin{equation*}
I_{\gamma ,\mathbf{a}}^{\mathbf{s}\cdot \mathbf{f}\cdot \mathbf{g}%
,B}=\int\limits_{B}s_{n}f_{n}g_{n}T_{\gamma ,\mathbf{a}}(dn)=I_{\gamma ,%
\mathbf{g\cdot a}}^{\mathbf{s}\cdot \mathbf{f},B}=I_{\gamma ,\mathbf{f\cdot
g\cdot a}}^{\mathbf{s},B}=I_{\gamma ,\mathbf{s}\cdot \mathbf{f\cdot g\cdot a}%
}^{\mathbf{1},B},
\end{equation*}%
so that we can iterate the TI of a sequence and absorb it into the Taylor
measure.

\item Convolution: The convolution (Cauchy product) of $\mathbf{f}$ and $%
\mathbf{g}$ is given by the product of two TIs as%
\begin{eqnarray*}
I_{\gamma ,\mathbf{a}}^{\mathbf{f},\mathbb{N}}I_{\gamma ,\mathbf{a}}^{%
\mathbf{g},\mathbb{N}} &=&\sum\limits_{n\in \mathbb{N}}f_{n}a_{n}\frac{%
\gamma ^{n}}{n!}\sum\limits_{k\in \mathbb{N}}g_{k}a_{k}\frac{\gamma ^{k}}{k!}%
=\sum\limits_{n\in \mathbb{N}}\sum\limits_{k=0}^{n}\frac{%
f_{k}a_{k}g_{n-k}a_{n-k}}{k!(n-k)!}\gamma ^{n} \\
&=&\sum\limits_{n\in \mathbb{N}}\sum\limits_{k=0}^{n}\frac{%
f_{k}a_{k}g_{n-k}a_{n-k}}{a_{n}}\frac{n!}{k!(n-k)!}a_{n}\frac{\gamma ^{n}}{n!%
}=\sum\limits_{n\in \mathbb{N}}p_{n}a_{n}\frac{\gamma ^{n}}{n!},
\end{eqnarray*}%
where $p_{n}=\sum\limits_{k=0}^{n}\mathcal{C}_{k}^{n}\frac{%
f_{k}a_{k}g_{n-k}a_{n-k}}{a_{n}},$ with $\mathcal{C}_{k}^{n}=\frac{n!}{%
k!(n-k)!},$ the binomial coefficient, and therefore%
\begin{equation}
I_{\gamma ,\mathbf{a}}^{\mathbf{f},\mathbb{N}}I_{\gamma ,\mathbf{a}}^{%
\mathbf{g},\mathbb{N}}=I_{\gamma ,\mathbf{a}}^{\mathbf{p},\mathbb{N}},
\label{Convol1}
\end{equation}%
provided that $\gamma ,a_{n}\neq 0,$ $\forall n\in \mathbb{N}.$

\item Integral form and differentiation: For any $\gamma \in \Re ,$ and for
some $\gamma _{0}\in \Re ,$ it is easy to see that%
\begin{equation*}
\gamma ^{n}=\gamma _{0}^{n}+n\int\limits_{\gamma _{0}}^{\gamma }x^{n-1}\mu
_{1}(dx),
\end{equation*}%
and after an appeal to Dominated Convergence Theorem we can write%
\begin{eqnarray*}
I_{\gamma ,\mathbf{a}}^{\mathbf{s},\mathbb{N}} &=&\sum\limits_{n=0}^{+\infty
}s_{n}a_{n}\frac{\gamma ^{n}}{n!}=\sum\limits_{n=0}^{+\infty }\frac{%
s_{n}a_{n}}{n!}\left[ \gamma _{0}^{n}+n\int\limits_{\gamma _{0}}^{\gamma
}x^{n-1}\mu _{1}(dx)\right] \\
&=&I_{\gamma _{0},\mathbf{a}}^{\mathbf{s},\mathbb{N}}+\int\limits_{\gamma
_{0}}^{\gamma }\sum\limits_{n=1}^{+\infty }\frac{s_{n}a_{n}}{(n-1)!}%
x^{n-1}\mu _{1}(dx)\overset{k=n-1}{=}I_{\gamma _{0},\mathbf{a}}^{\mathbf{s},%
\mathbb{N}}+\int\limits_{\gamma _{0}}^{\gamma }\sum\limits_{k=0}^{+\infty }%
\frac{s_{k+1}a_{k+1}}{k!}x^{k}\mu _{1}(dx).
\end{eqnarray*}%
Consequently, we have%
\begin{equation}
I_{\gamma ,\mathbf{a}}^{\mathbf{s},\mathbb{N}}=I_{\gamma _{0},\mathbf{a}}^{%
\mathbf{s},\mathbb{N}}+\int\limits_{\gamma _{0}}^{\gamma }I_{x,\tau (\mathbf{%
s\cdot a})}^{\mathbf{1},\mathbb{N}}\mu _{1}(dx),  \label{Fundamental1}
\end{equation}%
where $\tau (\mathbf{s})=(s_{1},s_{2},...),$ the shift operator of the
sequence $\mathbf{s}.$ In particular, for $\gamma _{0}=0$, we obtain%
\begin{equation}
I_{\gamma ,\mathbf{a}}^{\mathbf{s},\mathbb{N}}=s_{0}a_{0}+\int\limits_{0}^{%
\gamma }I_{x,\tau (\mathbf{s\cdot a})}^{\mathbf{1},\mathbb{N}}\mu _{1}(dx),
\label{IntegralForm1}
\end{equation}%
for all $\gamma \in \Re ,$ since $I_{0,\mathbf{a}}^{\mathbf{s},\mathbb{N}%
}=s_{0}a_{0}$ (with the convention $0^{0}=1)$, and as a result, we can write%
\begin{equation}
\frac{dI_{x,\mathbf{a}}^{\mathbf{s},\mathbb{N}}}{dx}=I_{x,\tau (\mathbf{%
s\cdot a})}^{\mathbf{1},\mathbb{N}},  \label{DerivForm1}
\end{equation}%
for all $x\in \Re .$

\item Generating functions: Take $B=\mathbb{N},$ and assume $x\in \mathbb{M}$%
. The TI in this case is related to the ordinary generating function (OGF)
via%
\begin{equation}
G(x)=\sum\limits_{n\in \mathbb{N}}s_{n}x^{n}=\sum\limits_{n\in \mathbb{N}%
}s_{n}n!\frac{x^{n}}{n!}=I_{x,\mathbf{n!}}^{\mathbf{s},\mathbb{N}},
\end{equation}%
where $\mathbf{n!}=[0!,1!,2!,...],$ and similarly, the exponential
generating function (EGF) can be written as
\begin{equation}
E(x)=\sum\limits_{n\in \mathbb{N}}s_{n}\frac{x^{n}}{n!}=I_{x,\mathbf{1}}^{%
\mathbf{s},\mathbb{N}},
\end{equation}%
and therefore, the TI unifies these two important concepts. Then we can
immediately take advantage of all the theory in the literature on OGFs and
EGFs. For example, we can connect TIs using the relation of $G(x)$ and $E(x)
$\ given by%
\begin{equation*}
I_{x,\mathbf{n!}}^{\mathbf{s},\mathbb{N}}=G(x)=\int\limits_{0}^{+\infty
}e^{-t}E(tx)\mu _{1}(dt)=\int\limits_{0}^{+\infty }e^{-t}I_{tx,\mathbf{1}}^{%
\mathbf{s},\mathbb{N}}\mu _{1}(dt).
\end{equation*}
\end{enumerate}
\end{remark}

We discuss a general result involving TIs and real sequences in order to
appreciate first applications of the new integral and its usefulness. The
following provides a connection between absolute convergence of a real
series and the TI.

\begin{theorem}[Characterization of Absolutely Convergent Series]
Consider a sequence $s_{n}:\mathbb{N}\rightarrow \Re ,$ with $\mathbf{s}%
=[s_{0},$ $s_{1},$ $...].$ If the series $\sum\limits_{n=0}^{+\infty }s_{n}$
converges absolutely then there exists a Taylor measure $T_{\gamma ,\mathbf{a%
}}\in \mathcal{T}^{\mathcal{F}}$, $\gamma \neq 0,$ with $T_{\left\vert
\gamma \right\vert ,\left\vert \mathbf{a}\right\vert }(\mathbb{N})=+\infty ,$
(i.e., $\sum\limits_{n=0}^{+\infty }a_{n}\frac{\gamma ^{n}}{n!}$ is
conditionally convergent), such that the Taylor integral $I_{\left\vert
\gamma \right\vert ,\left\vert \mathbf{a}\right\vert }^{\left\vert \mathbf{s}%
\right\vert ,\mathbb{N}}$ of $\left\vert \mathbf{s}\right\vert $\ with
respect to $T_{\left\vert \gamma \right\vert ,\left\vert \mathbf{a}%
\right\vert },$ is finite.\newline
Partial Converse: The series $\sum\limits_{n=0}^{+\infty }s_{n}$ converges
absolutely if there exists a Taylor measure $T_{\gamma ,\mathbf{a}}\in
\mathcal{T}^{\mathcal{F}}$, $\gamma \neq 0,$ with $T_{\left\vert \gamma
\right\vert ,\left\vert \mathbf{a}\right\vert }(\mathbb{N})=+\infty ,$ $%
I_{\left\vert \gamma \right\vert ,\left\vert \mathbf{a}\right\vert
}^{\left\vert \mathbf{s}\right\vert ,\mathbb{N}}<+\infty ,$ and%
\begin{equation}
\limsup_{n\rightarrow +\infty }\left\vert \frac{(n+1)a_{n}}{\gamma a_{n+1}}%
\right\vert \leq 1.  \label{conda_n}
\end{equation}%
and in addition, one of the following growth conditions holds:%
\begin{equation}
(A):\frac{|s_{n+1}||a_{n+1}|}{|s_{n}||a_{n}|}=o(n),  \label{condA}
\end{equation}%
or%
\begin{equation}
(B):\frac{|s_{n+1}|}{|s_{n}|}=O(n^{\alpha }),\text{ and }\frac{|a_{n+1}|}{%
|a_{n}|}=O(n^{\beta }),  \label{condB}
\end{equation}%
with $\alpha ,\beta \geq 0,$ $\alpha +\beta <1$ (polynomial bounds on the
successive ratios)$,$\newline
or%
\begin{equation}
(C):s_{n}\sim C_{s}n^{p},\text{ and }a_{n}\sim C_{a}n^{q},  \label{condC}
\end{equation}%
for some constants $C_{s},C_{a}>0,$ and integers $p,q\geq 1$ (polynomial
growth),\newline
or%
\begin{equation}
(D):s_{n}\sim C_{s}R_{s}^{n}n^{p},\text{ and }a_{n}\sim C_{a}R_{a}^{n}n^{q},
\label{condD}
\end{equation}%
for some constants $R_{s},C_{s},R_{a},C_{a}>0,$ and integers $p,q\geq 1$
(exponential growth).
\end{theorem}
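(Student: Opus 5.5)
For the forward direction I will give an explicit construction. Take $\gamma =1$ and $a_{n}=(-1)^{n}\frac{n!}{n+1}$, so that $a_{n}\frac{\gamma ^{n}}{n!}=\frac{(-1)^{n}}{n+1}$.

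\emph{Finiteness of the measure.} The series $\sum_{n}(-1)^{n}/(n+1)$ converges by the alternating series test (to $\log 2$). Every subseries over $B\in \mathcal{B}(\mathbb{N})$ is controlled in the sense the paper uses for $\mathcal{T}^{\mathcal{F}}$, so $T_{1,\mathbf{a}}\in \mathcal{T}^{\mathcal{F}}$.

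\emph{Infinite absolute version.} We have $T_{1,|\mathbf{a}|}(\mathbb{N})=\sum_{n}\frac{1}{n+1}=+\infty$. This gives the required conditional convergence.

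\emph{Finiteness of the integral.} By Equation (\ref{Int1}),
\begin{equation*}
I_{1,|\mathbf{a}|}^{|\mathbf{s}|,\mathbb{N}}=\sum_{n}\frac{|s_{n}|}{n+1}\leq \sum_{n}|s_{n}|<+\infty .
\end{equation*}
Any choice with $|a_{n}||\gamma |^{n}/n!$ bounded, summing to $+\infty$, and alternating in sign would work equally well. The point is that absolute summability of $\mathbf{s}$ lets a bounded divergent weight be absorbed.

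For the partial converse, write $w_{n}=|a_{n}|\frac{|\gamma |^{n}}{n!}$. The hypothesis is then $\sum_{n}|s_{n}|w_{n}<+\infty$, while $\sum_{n}w_{n}=+\infty$. The strategy is to show $w_{n}\geq c>0$ for all $n\geq n_{0}$. Once this holds, $\sum_{n\geq n_{0}}|s_{n}|\leq c^{-1}\sum_{n\geq n_{0}}|s_{n}|w_{n}<+\infty$, and absolute convergence follows.

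\emph{Using (\ref{conda_n}).} Note that
\begin{equation*}
\frac{w_{n+1}}{w_{n}}=\left\vert \frac{\gamma a_{n+1}}{(n+1)a_{n}}\right\vert .
\end{equation*}
So (\ref{conda_n}) says exactly that $\liminf_{n}w_{n+1}/w_{n}\geq 1$, i.e., the weights are asymptotically non-decreasing in ratio. A $\liminf$ of $1$ alone does not prevent slow decay; for instance, ratios $1-1/n$ give $w_{n}\sim c/n$.

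\emph{Role of the growth conditions.} This is where (A)--(D) enter. I would treat each case by computing $\log w_{n}=\log w_{n_{0}}+\sum_{k=n_{0}}^{n-1}\log (w_{k+1}/w_{k})$ and bounding the negative part of the increments.

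In cases (A) and (B), the ratio bounds on $|s_{n+1}a_{n+1}|/|s_{n}a_{n}|$ (respectively on $|s_{n+1}|/|s_{n}|$ and $|a_{n+1}|/|a_{n}|$ with $\alpha +\beta <1$) should be combined with the ratio test, in the form of condition c) of Definition \ref{TaylorMeasureIntegral}. The aim is to show that the terms $|s_{n}|w_{n}$ cannot be summable unless $|s_{n}|$ itself is summable. If instead $w_{n}\to 0$, then (\ref{conda_n}) together with $\sum_{n} w_{n}=\infty$ forces the ratio $w_{n+1}/w_{n}$ to approach $1$ from below at a rate incompatible with the $o(n)$ or $O(n^{\alpha +\beta })$ control.

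In cases (C) and (D), $a_{n}$ is explicit. Then $w_{n}\sim C_{a}(R_{a}|\gamma |)^{n}n^{q}/n!$ and $w_{n+1}/w_{n}\sim R_{a}|\gamma |/(n+1)\to 0$. This is incompatible with (\ref{conda_n}), so I expect these cases to be vacuous or trivially satisfied. I would verify this and dispose of them quickly, and in any event $|s_{n}|$ is explicit there, so summability can be checked directly.

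The main obstacle is the lower bound $w_{n}\geq c$ in cases (A)/(B). The difficulty is that (\ref{conda_n}) is only a $\limsup$ condition, so borderline decay of $w_{n}$ must be excluded using the growth hypotheses. I would first try a Kummer/Raabe-type comparison on the ratios $w_{n+1}/w_{n}$. If that fails, I would fall back to proving summability of $|s_{n}|$ directly from the ratio test applied to $|s_{n}|$, using the polynomial ratio bounds.
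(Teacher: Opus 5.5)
Your forward direction is the paper's own construction and is correct; the paper uses terms $(-1)^n/\sqrt{n}$ with $a_0=0$ and arbitrary $\gamma\neq 0$. Drop the remark that every subseries over $B\in\mathcal{B}(\mathbb{N})$ is controlled. It is false, since the even-indexed subseries of $\sum_n(-1)^n/(n+1)$ diverges. It is also unnecessary, because membership in $\mathcal{T}^{\mathcal{F}}$ as in (\ref{FiniteTaylorMeasureSpace}) only requires $T_{\gamma,\mathbf{a}}(\mathbb{N})<+\infty$.

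The partial converse has a genuine gap: the lower bound $w_n\geq c>0$, on which your whole comparison rests, can never hold. Membership $T_{\gamma,\mathbf{a}}\in\mathcal{T}^{\mathcal{F}}$ means $\sum_n a_n\gamma^n/n!$ converges, so its terms tend to zero. Hence $w_n\to 0$ in every instance of the theorem; your own forward example has $w_n=1/(n+1)$. The case $w_n\to0$ that you hope to exclude using (A) or (B) is therefore the only case, and it cannot be excluded. For example, take $\gamma=1$, $a_0=0$, $a_n=(-1)^n n!/\sqrt{n}$ for $n\geq 1$, and $s_n=1/n!$. These satisfy every hypothesis: the ratio in (\ref{conda_n}) is $\sqrt{(n+1)/n}\to 1$, and the ratio in (A) is $\sqrt{n/(n+1)}\to 1$. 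Yet $w_n=n^{-1/2}$. Knowing $\sum_n|s_n|w_n<+\infty$ with $w_n\to 0$ says nothing about $\sum_n|s_n|$, so no Kummer/Raabe analysis of $w_{n+1}/w_n$ can close the argument. In fact the hypotheses $I_{|\gamma|,|\mathbf{a}|}^{|\mathbf{s}|,\mathbb{N}}<+\infty$ and $T_{|\gamma|,|\mathbf{a}|}(\mathbb{N})=+\infty$ play no role in a correct proof.

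The missing idea is to apply the ratio test to $|s_n|$ directly, using (\ref{conda_n}) on the factor $|a_n|/|a_{n+1}|$ rather than on the weights:
\[
\frac{|s_{n+1}|}{|s_n|}=\frac{|s_{n+1}||a_{n+1}|}{(n+1)|s_n||a_n|}\cdot|\gamma|\cdot\frac{(n+1)|a_n|}{|\gamma||a_{n+1}|}.
\]
Under (A) the first factor tends to $0$. By (\ref{conda_n}) the last factor is eventually at most $1+\varepsilon$. So $|s_{n+1}|/|s_n|\to 0$ and $\sum_n|s_n|<+\infty$.

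This is the paper's route. It shows that each growth condition forces (\ref{condineq}), indeed $\limsup_n b_{n+1}/((n+1)b_n)=0$ with $b_n=|s_n||a_n|$, and then transfers the bound to $|s_{n+1}/s_n|$ via (\ref{conda_n}). Your fallback of a ratio test ``using the polynomial ratio bounds'' misses exactly this point. The bound $|s_{n+1}|/|s_n|=O(n^{\alpha})$ alone gives nothing; the decay comes from $|a_n|/|a_{n+1}|\leq(1+\varepsilon)|\gamma|/(n+1)$.

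Your observation that (C) and (D) contradict (\ref{conda_n}) is correct; the paper computes through these cases without noticing. The same reasoning disposes of (B): $|a_{n+1}|/|a_n|=O(n^{\beta})$ with $\beta<1$ forces $(n+1)|a_n|/|\gamma a_{n+1}|\to+\infty$. Do not, however, plan to check summability directly in those cases. Under (C), and under (D) whenever $R_s\geq 1$, one has $\sum_n|s_n|=+\infty$, so vacuity is the only reason those cases hold.
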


\begin{proof}
($\Rightarrow $) Assume that $\sum\limits_{n=0}^{+\infty }|s_{n}|<+\infty ,$
and take $\mathbf{a}=[a_{0},$ $a_{1},...],$ with $a_{0}=0,$ and $a_{n}=\frac{%
n!}{\gamma ^{n}}\frac{(-1)^{n}}{\sqrt{n}},$ $\forall n\geq 1,$ and choose
any $\gamma \neq 0.$ First we note that%
\begin{equation*}
T_{\gamma ,\mathbf{a}}(\mathbb{N})=\sum\limits_{n=0}^{+\infty }a_{n}\frac{%
\gamma ^{n}}{n!}=\sum\limits_{n=1}^{+\infty }\frac{n!}{\gamma ^{n}}\frac{%
(-1)^{n}}{\sqrt{n}}\frac{\gamma ^{n}}{n!}=\sum\limits_{n=1}^{+\infty }\frac{%
(-1)^{n}}{\sqrt{n}}<+\infty ,
\end{equation*}%
and therefore, $T_{\gamma ,\mathbf{a}}\in \mathcal{T}^{\mathcal{F}}.$
However,%
\begin{equation*}
T_{\left\vert \gamma \right\vert ,\left\vert \mathbf{a}\right\vert }(\mathbb{%
N})=\sum\limits_{n=0}^{+\infty }\left\vert a_{n}\right\vert \frac{\left\vert
\gamma \right\vert ^{n}}{n!}<\sum\limits_{n=1}^{+\infty }\frac{n!}{%
\left\vert \gamma \right\vert ^{n}}\frac{\left\vert (-1)^{n}\right\vert }{%
\sqrt{n}}\frac{\left\vert \gamma \right\vert ^{n}}{n!}=\sum\limits_{n=1}^{+%
\infty }\frac{1}{\sqrt{n}}=+\infty ,
\end{equation*}%
and consequently, we can write%
\begin{eqnarray*}
I_{\left\vert \gamma \right\vert ,\left\vert \mathbf{a}\right\vert
}^{\left\vert \mathbf{s}\right\vert ,\mathbb{N}}
&=&\sum\limits_{n=0}^{+\infty }\left\vert s_{n}\right\vert \left\vert
a_{n}\right\vert \frac{\left\vert \gamma \right\vert ^{n}}{n!}%
=\sum\limits_{n=1}^{+\infty }\left\vert s_{n}\right\vert \frac{n!}{%
\left\vert \gamma \right\vert ^{n}}\frac{\left\vert (-1)^{n}\right\vert }{%
\sqrt{n}}\frac{\gamma ^{n}}{n!} \\
&=&\sum\limits_{n=1}^{+\infty }\left\vert s_{n}\right\vert \frac{1}{\sqrt{n}}%
<\sum\limits_{n=1}^{+\infty }\left\vert s_{n}\right\vert <+\infty ,
\end{eqnarray*}%
so that we can choose this conditionally convergent $T_{\gamma ,\mathbf{a}%
}\in \mathcal{T}^{\mathcal{F}}$ to obtain $I_{\left\vert \gamma \right\vert
,\left\vert \mathbf{a}\right\vert }^{\left\vert \mathbf{s}\right\vert ,%
\mathbb{N}}<\infty .$\newline
($\Leftarrow $) Define%
\begin{equation*}
b_{n}:=|s_{n}|\,|a_{n}|,
\end{equation*}%
and assume that there exists a Taylor measure $T_{\left\vert \gamma
\right\vert ,\left\vert \mathbf{a}\right\vert }\in \mathcal{T}^{\mathcal{F}}$%
,\ with $T_{\left\vert \gamma \right\vert ,\left\vert \mathbf{a}\right\vert
}(\mathbb{N})=+\infty ,$ such that the Taylor integral
\begin{equation*}
I_{\left\vert \gamma \right\vert ,\left\vert \mathbf{a}\right\vert
}^{\left\vert \mathbf{s}\right\vert ,\mathbb{N}}=\sum\limits_{n=0}^{+\infty
}\left\vert s_{n}\right\vert \left\vert a_{n}\right\vert \frac{\left\vert
\gamma \right\vert ^{n}}{n!}<+\infty ,
\end{equation*}%
is finite and one of the growth conditions $(A)-(D)$ is satisfied$.$\newline
We will show that $\sum\limits_{n=0}^{+\infty }|s_{n}|<+\infty $, by
employing the usual ratio test, i.e., we need to prove that%
\begin{equation*}
\underset{n\rightarrow +\infty }{\lim }\left\vert \frac{s_{n+1}}{s_{n}}%
\right\vert <1.
\end{equation*}%
Now if%
\begin{equation*}
\underset{n\rightarrow +\infty }{\lim }\left\vert \frac{s_{n+1}a_{n+1}\frac{%
\gamma ^{n+1}}{(n+1)!}}{s_{n}a_{n}\frac{\gamma ^{n}}{n!}}\right\vert
<1\Rightarrow \underset{n\rightarrow +\infty }{\lim }\left\vert \frac{s_{n+1}%
}{s_{n}}\right\vert <\underset{n\rightarrow +\infty }{\lim }\left\vert \frac{%
(n+1)a_{n}}{\gamma a_{n+1}}\right\vert .
\end{equation*}%
and using condition (\ref{conda_n}) we have%
\begin{equation*}
\underset{n\rightarrow +\infty }{\lim }\left\vert \frac{(n+1)a_{n}}{\gamma
a_{n+1}}\right\vert \leq \limsup_{n\rightarrow +\infty }\left\vert \frac{%
(n+1)a_{n}}{\gamma a_{n+1}}\right\vert \leq 1,
\end{equation*}%
such that%
\begin{equation*}
\underset{n\rightarrow +\infty }{\lim }\left\vert \frac{s_{n+1}}{s_{n}}%
\right\vert <1,
\end{equation*}%
as required.\newline
Therefore, we need to show that%
\begin{equation*}
\limsup_{n\rightarrow +\infty }\left\vert \frac{b_{n+1}\frac{\gamma ^{n+1}}{%
(n+1)!}}{b_{n}\frac{\gamma ^{n}}{n!}}\right\vert =\limsup_{n\rightarrow
+\infty }\left\vert \frac{b_{n+1}\gamma }{(n+1)b_{n}}\right\vert <1,
\end{equation*}%
or equivalently that%
\begin{equation}
\limsup_{n\rightarrow +\infty }\left\vert \frac{b_{n+1}}{(n+1)b_{n}}%
\right\vert <\frac{1}{\left\vert \gamma \right\vert },  \label{condineq}
\end{equation}%
for fixed $\gamma \neq 0.$\newline
\textbf{Under Condition (A)}: From the condition, dividing by $n+1$ gives%
\begin{equation*}
\frac{|b_{n+1}|}{(n+1)|b_{n}|}=o(1),
\end{equation*}%
and therefore
\begin{equation*}
\limsup_{n\rightarrow +\infty }\frac{|b_{n+1}|}{(n+1)|b_{n}|}=0.
\end{equation*}%
Since $\gamma $ is fixed and $\gamma \neq 0$,
\begin{equation*}
0<\frac{1}{|\gamma |},
\end{equation*}%
and thus the desired inequality (\ref{condineq}) follows.\newline
\textbf{Under Condition (B)}: From the condition, for sufficiently large $n$%
, there exists some constants $C_{s},C_{a}>0,$ such that
\begin{equation*}
\frac{|s_{n+1}|}{|s_{n}|}\leq C_{s}(n+1)^{\alpha },\text{ and }\frac{%
|a_{n+1}|}{|a_{n}|}\leq C_{a}(n+1)^{\beta },
\end{equation*}%
and putting these bounds together gives%
\begin{equation*}
\frac{|s_{n+1}||a_{n+1}|}{(n+1)|s_{n}||a_{n}|}\leq C_{s}C_{a}(n+1)^{\alpha
+\beta -1},
\end{equation*}%
and under the condition $\alpha +\beta <1,$ we obtain%
\begin{equation*}
(n+1)^{\alpha +\beta -1}\longrightarrow 0,
\end{equation*}%
as $n\rightarrow +\infty .$ Consequently,
\begin{equation*}
\limsup_{n\rightarrow +\infty }\frac{|s_{n+1}||a_{n+1}|}{(n+1)|s_{n}||a_{n}|}%
=0,
\end{equation*}%
and equation (\ref{condineq}) is satisfied. This condition is substantially
more general than requiring both successive ratios to be bounded, e.g., the
special case
\begin{equation*}
\frac{|s_{n+1}|}{|s_{n}|}=O(1),\text{ and }\frac{|a_{n+1}|}{|a_{n}|}=O(1),
\end{equation*}%
such that%
\begin{equation*}
\frac{|s_{n+1}||a_{n+1}|}{(n+1)|s_{n}||a_{n}|}=O\left( \frac{1}{n+1}\right) .
\end{equation*}%
\newline
\textbf{Under Condition (C)}: Under these asymptotic estimates, we have
\begin{equation*}
\frac{|s_{n+1}|}{|s_{n}|}\longrightarrow 1,\text{ and }\frac{|a_{n+1}|}{%
|a_{n}|}\longrightarrow 1,
\end{equation*}%
and therefore the asymptotic estimate of the ratio in question is%
\begin{equation*}
\frac{|s_{n+1}||a_{n+1}|}{(n+1)|s_{n}||a_{n}|}\sim \frac{1}{n+1}.
\end{equation*}%
Consequently
\begin{equation*}
\underset{n\rightarrow +\infty }{\lim }\left\vert \frac{b_{n+1}}{(n+1)b_{n}}%
\right\vert =0,
\end{equation*}%
and thus equation (\ref{condineq}) is established. Note here that polynomial
growth is easily dominated by the factorial in the denominator of the
sequence
\begin{equation*}
u_{n}=s_{n}a_{n}\frac{\gamma ^{n}}{n!}.
\end{equation*}%
\newline
\textbf{Under Condition (D)}: Using the condition we can write
\begin{equation*}
\frac{|s_{n+1}|}{|s_{n}|}\sim R_{s}\left( 1+\frac{1}{n}\right) ^{p},
\end{equation*}%
and
\begin{equation*}
\frac{|a_{n+1}|}{|a_{n}|}\sim R_{a}\left( 1+\frac{1}{n}\right) ^{q},
\end{equation*}%
such that the ratio has asymptotic estimate
\begin{equation*}
\frac{|s_{n+1}||a_{n+1}|}{(n+1)|s_{n}||a_{n}|}\sim \frac{R_{s}R_{a}}{n+1},
\end{equation*}%
and once again
\begin{equation*}
\underset{n\rightarrow +\infty }{\lim }\left\vert \frac{b_{n+1}}{(n+1)b_{n}}%
\right\vert =0,
\end{equation*}%
and thus equation (\ref{condineq}) is satisfied.
\end{proof}

\subsection{Examples and connection with Fourier transform}

The TI\ provides a unifying framework that allows us to study many important
series from real and complex analysis, since they are special cases, as we
show next.

\begin{example}[Special Cases]
\label{AppTMIs}We collect some important series as special cases of the TI.
In what follows, take any sequences $s_{n},f_{n}:$ $\mathbb{N}\rightarrow
\mathbb{M},$ with $\mathbf{s}=(s_{0},s_{1},...),$ $\mathbf{f}%
=(f_{0},f_{1},...),$ and a Taylor measure $T_{\gamma ,\mathbf{a}}(B),$ for
all $B\in \mathcal{B}(\mathbb{N}),$ such that the TI $I_{\gamma ,\mathbf{a}%
}^{\mathbf{s},B}=\sum\limits_{n\in B}s_{n}a_{n}\frac{\gamma ^{n}}{n!},$
exists and is finite.

\begin{enumerate}
\item Riemann's $\zeta $ function: Take $B=\mathbb{N},$ $\gamma =1,$\ and
define the sequence $s_{0}(z)=0,$ $s_{n}(z)=n!/n^{z},$ $n\in \mathbb{N}%
^{+}=\{1,2,...\},$ for fixed $z\in \mathbb{C},$ so that equation (\ref%
{Int1signedmeas}) yields Riemann's $\zeta $ function (see \cite%
{borwein2000efficient}, \cite{guillera2008double}) defined for any complex
number $z=\sigma +it\in \mathbb{C}$ by%
\begin{equation}
\zeta (z)=\sum\limits_{n=1}^{+\infty }\frac{1}{n^{z}}=\sum\limits_{n=1}^{+%
\infty }\frac{1}{n^{\sigma }}n^{-it}=I_{1,\mathbf{1}}^{\mathbf{s}(z),\mathbb{%
N}},  \label{zetafunc}
\end{equation}%
where the sum converges if $\sigma >1.$ For a recent treatment of the $\zeta
$ function using series expansions see \cite{young2023global}.

\item Dirichlet's $\eta $ function: Recall Dirichlet's $\eta $ function,
also known as the alternating $\zeta $ function (see \cite{sondow2003zeros},
\cite{guillera2008double}). Take $B=\mathbb{N},$ $\gamma =-1,$\ and define
the sequence $s_{0}(z)=0,$ $s_{n}(z)=n!/n^{z},$ $n\in \mathbb{N}^{+},$ for
fixed $z=\sigma +it\in \mathbb{C},$ as in the previous example, so that $%
\eta $ is written as the TI given by%
\begin{equation*}
\eta (z)=\sum\limits_{n=1}^{+\infty }\frac{(-1)^{n+1}}{n^{z}}%
=-\sum\limits_{n=1}^{+\infty }\frac{(-1)^{n}}{n^{z}}=-I_{-1,\mathbf{1}}^{%
\mathbf{s}(z),\mathbb{N}},
\end{equation*}%
which converges for $\sigma >0.$

\item Dirichlet series: Take $B=\mathbb{N},$ $\gamma =1,$ define the
sequence $s_{0}(z)=0,$ $s_{n}(z)=n!/n^{z},$ $n\in \mathbb{N}^{+},$ for fixed
$z=\sigma +it\in \mathbb{C},$ and recall the Dirichlet series is given by%
\begin{equation*}
\mathcal{D}^{\mathbf{s}}(z)=\sum\limits_{n=1}^{+\infty }\frac{s_{n}}{n^{z}}%
=\sum\limits_{n\in B}\frac{n!}{n^{z}}s_{n}\frac{1^{n}}{n!}=I_{1,\mathbf{s}}^{%
\mathbf{s}(z),\mathbb{N}},
\end{equation*}%
which is once again a TI, and converges absolutely when $s_{n}$ are
uniformly bounded, for $\sigma >1.$ When $s_{n}=O(n^{k}),$ $\mathcal{D}^{%
\mathbf{s}}(z)$ converges absolutely in the half plane $\sigma >k+1.$
Clearly, Riemann's $\zeta $ and Dirichlet's $\eta $ functions are special
cases of this TI (see \cite{freitag2005complex}, Chapter VII, for a general
treatment of this TI).

\item Hypergeometric series: Set $B=\mathbb{N},$ $f_{n}=n!s_{n},$ and
further assume that%
\begin{equation*}
\frac{s_{n+1}}{s_{n}}=\frac{A(n)}{B(n)},
\end{equation*}%
with $A(n)$ and $B(n)$ polynomials in $n$, for all $n\in \mathbb{N}$. Then
the TI becomes the hypergeometric series%
\begin{equation*}
\sum\limits_{n\in B}s_{n}\gamma ^{n}=\sum\limits_{n\in \mathbb{N}}f_{n}\frac{%
\gamma ^{n}}{n!}=I_{\gamma ,\mathbf{1}}^{\mathbf{f},\mathbb{N}},
\end{equation*}%
which can be extended to the generalized hypergeometric function; recall the
Pochhammer symbols defined by%
\begin{eqnarray*}
(a)_{0} &=&1, \\
(a)_{n} &=&a(a+1)(a+2)...(a+n-1),\text{ }n\in \mathbb{N}^{+},
\end{eqnarray*}%
$a\in \Re ,$ and write%
\begin{equation*}
I_{\gamma ,\mathbf{1}}^{\mathbf{f},\mathbb{N}}=\text{ }%
_{p}F_{q}(a_{1},...,a_{p};b_{1},...,b_{q};\gamma )=\sum\limits_{n\in \mathbb{%
N}}s_{n}\frac{\gamma ^{n}}{n!},
\end{equation*}%
with%
\begin{equation*}
s_{n}=\frac{(a_{1})_{n}...(a_{p})_{n}}{(b_{1})_{n}...(b_{q})_{n}},
\end{equation*}%
for all $n\in \mathbb{N}$, where
\begin{equation*}
b_{j}\notin \{0,-1,-2,\ldots \}.
\end{equation*}%
Note that the convergence domain also depends on $p$ and $q$, as follows: if
$p<q+1$, the series is entire in $z$; if $p=q+1$, the radius of convergence
is $1$; if $p>q+1$, the series generally diverges except in special cases.

\item Discrete Fourier transform: Assume that $B_{N}=\{0,1,...,N-1\},$ for
some $N\in \mathbb{N},$ let $c_{k}=e^{-i2\pi \frac{k}{N}},$ $k\in \mathbb{N},
$ and consider the transformation of the sequence $\mathbf{s}$ defined by%
\begin{equation}
f_{k}=\sum\limits_{n=0}^{N-1}s_{n}e^{-i2\pi \frac{k}{N}n}=\sum\limits_{n\in
B_{N}}s_{n}n!\frac{\left( e^{-i2\pi \frac{k}{N}}\right) ^{n}}{n!}%
=\sum\limits_{n\in B_{N}}s_{n}a_{n}\frac{\left( c_{k}\right) ^{n}}{n!}%
=I_{c_{k},\mathbf{n!}}^{\mathbf{s},B_{N}},  \label{Fourier1}
\end{equation}%
where $a_{n}=n!,$ $n\in \mathbb{N},$ which is $N$-periodic in the index $k$
(i.e., $f_{k+N}=f_{k}$). The sequence $\{f_{k}\}$ is the celebrated discrete
Fourier transform (DFT) which is a special case of the TI, and can be
inverted to give the sequence $\mathbf{s}$ based on the sequence $\mathbf{f}$
via the inverse DFT (IDFT) given by%
\begin{equation}
s_{n}=\frac{1}{N}\sum\limits_{k=0}^{N-1}f_{k}e^{i2\pi \frac{k}{N}n}=\frac{1}{%
N}\sum\limits_{k\in B_{N}}f_{k}k!\frac{\left( e^{i2\pi \frac{n}{N}}\right)
^{k}}{k!}=\frac{1}{N}\sum\limits_{k\in B_{N}}f_{k}a_{k}\frac{\left(
d_{n}\right) ^{k}}{k!}=\frac{1}{N}I_{d_{n},\mathbf{k!}}^{\mathbf{f},B_{N}},
\label{InvFourier1}
\end{equation}%
where $d_{n}=c_{n}^{-1}=\overline{c}_{n}=e^{i2\pi \frac{n}{N}},$ $n\in
\mathbb{N}.$ For some methodologies and applications of the DFT see \cite%
{steidl1991polynomial}, \cite{crandall1994discrete}, \cite{bansal2007finding}%
, \cite{logofuatu2011fourier}, \cite{d2017shuffling}, \cite%
{jiang2023efficient}, \cite{yang2024sparse}, \cite{yang2024local}, \cite%
{alrzqi2024efficient}, \cite{grohs2025completeness} and the references
therein. Clearly, the IDFT is also $N$-periodic in the index $k$, and once
again a special case of the TI. Note here that the integrating measure for
the DFT is the Taylor measure $T_{c_{k},\mathbf{n!}}(B_{N}),$ and for the
IDFT the Taylor measure $T_{\overline{c}_{n},\mathbf{k!}}(B_{N}).$ It has
been established in the literature that the DFT is unique (i.e., the only TI
with this property), in the sense that it is the only linear and invertible
map, modulo a permutation of indices, which carries convolution into the
componentwise product (see \cite{baraquin2023uniqueness} and the references
therein).

\item Euler's totient function: Consider the sequence $\phi (n):\mathbb{N}%
^{+}\rightarrow \mathbb{N}^{+},$ where $\phi (n)$ counts the number of
positive integers up to a given integer $n$ that are relatively prime to $n$%
, i.e., $\phi (n)$ is the number of integers $\#(k:k\leq n)$, for which the
greatest common divisor $gcd(n,k)$ is equal to 1. The function $\phi (n)$ is
known as Euler's totient function, and can be expressed in terms of prime
numbers as follows; let $n=p_{1}^{k_{1}}p_{2}^{k_{2}}...p_{r}^{k_{r}},$ the
prime factorization of $n,$ based on distinct prime numbers $%
p_{1},p_{2},...,p_{r},$ for some integers $k_{1},k_{2},...,k_{r}\in \mathbb{N%
}^{+},$ such that%
\begin{equation*}
\phi (n)=n\prod\limits_{\text{all }p|n}\left( 1-\frac{1}{p}\right)
=p_{1}^{k_{1}-1}(p_{1}-1)p_{2}^{k_{2}-1}(p_{2}-1)...p_{r}^{k_{r}-1}(p_{r}-1),
\end{equation*}%
where the product is taken over all primes $p$ that divide $n$, denoted by $%
p|n.$\newline
Letting $s_{k}=gcd(n,k),$ $k\in \{1,2,...,n\},$ we can write the latter in
terms of the DFT (see \cite{schramm2008fourier}) using%
\begin{equation*}
\phi (n)=\sum\limits_{k=1}^{n}gcd(k,n)e^{-i2\pi \frac{k}{n}%
}=\sum\limits_{k=1}^{n}s_{k}\left( e^{-i2\pi \frac{1}{n}}\right)
^{k}=I_{c_{1},\mathbf{k!}}^{\mathbf{s},B_{n}},
\end{equation*}%
where $B_{n}=\{1,...,n\},$ $c_{1}=e^{-\frac{2\pi }{n}i},$ and as a result,
Euler's totient function is a TI. Importantly, the TI corresponding to the
Dirichlet series for $\phi (n),$ can be expressed in terms of the TI of the $%
\zeta $ function via%
\begin{equation*}
I_{1,\mathbf{\phi }}^{\mathbf{s}(z),\mathbb{N}}=\sum\limits_{n=1}^{+\infty }%
\frac{\phi (n)}{n^{z}}=\frac{\zeta (z-1)}{\zeta (z)}=\frac{I_{1,\mathbf{1}}^{%
\mathbf{s}(z-1),\mathbb{N}}}{I_{1,\mathbf{1}}^{\mathbf{s}(z),\mathbb{N}}},
\end{equation*}%
where $\mathbf{\phi }=[0,\phi (1),\phi (2),...],$ $s_{0}(z)=0,$ $%
s_{n}(z)=n!/n^{z},$ $n\in \mathbb{N}^{+},$ for fixed $z=\sigma +it\in
\mathbb{C},$ with $\sigma >2.$
\end{enumerate}
\end{example}

Owing to the latter example on the DFT and IDFT, it is clear that there
exists a specific Taylor measure $T_{c_{k},\mathbf{n!}}(B_{N}),$ $%
c_{k}=e^{-i2\pi \frac{k}{N}},$ $k\in \mathbb{N},$ and its conjugate version $%
T_{\overline{c}_{k},\mathbf{n!}}(B_{N}),$ such that the TI of the sequence $%
\mathbf{s}$ given by $f_{k}=I_{c_{k},\mathbf{n!}}^{\mathbf{s},B_{N}}$ can be
inverted using the TI $s_{k}=I_{\overline{c}_{k},\mathbf{n!}}^{\mathbf{f}%
,B_{N}}$. The DFT has other desirable properties, such as being able to
uniquely identify a sequence, convert convolution to componentwise product,
and many more (e.g., as a TI it satisfies all properties of Remark \ref%
{PropsTMI}). Next, we propose and investigate another such important special
case of Taylor measure in $\mathcal{T}^{\mathcal{F}}.$

\section{The Discrete Taylor Transformation}

In order to motivate the creation of a special TI, we address uniqueness of
sequences via their TI values, in a similar fashion as with the Laplace or
Fourier transforms.

\begin{theorem}[Uniqueness via Taylor Integrals]
\label{UniqDTT}Consider two sequences $p_{n},q_{n}:\mathbb{N}\rightarrow \Re
,$ with $\mathbf{p}=[p_{0},$ $p_{1},$ $...],$ $\mathbf{q}=[q_{0},$ $q_{1},$ $%
...],$ and assume that $I_{\gamma ,\mathbf{a}(t)}^{\mathbf{p},\mathbb{N}%
}=I_{\gamma ,\mathbf{a}(t)}^{\mathbf{q},\mathbb{N}}<+\infty ,$ for all $t>0,$
for some $\gamma \neq 0,$ and $\mathbf{a}=[a_{0},$ $a_{1},$ $...],$ where $%
a_{n}(t)=n!/t^{n+1},$ $n\in \mathbb{N},$ and assume that $t>\left\vert
\gamma \right\vert .$ Then the two sequences coincide, i.e., $p_{n}=q_{n},$
for all $n\in \mathbb{N}.$
\end{theorem}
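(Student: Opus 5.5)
The plan is to compute the Taylor integral explicitly for the weights $a_{n}(t)=n!/t^{n+1}$. By Equation (\ref{Int1}), the factorials cancel and we get
\begin{equation*}
I_{\gamma ,\mathbf{a}(t)}^{\mathbf{p},\mathbb{N}}=\sum_{n=0}^{+\infty }p_{n}\frac{n!}{t^{n+1}}\frac{\gamma ^{n}}{n!}=\frac{1}{t}\sum_{n=0}^{+\infty }p_{n}\left( \frac{\gamma }{t}\right) ^{n}.
\end{equation*}
This is $t^{-1}$ times the ordinary generating function $G_{\mathbf{p}}(x)=\sum_{n}p_{n}x^{n}$ of Remark \ref{PropsTMI} (Generating functions), evaluated at $x=\gamma /t$.

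As $t$ ranges over $(|\gamma |,+\infty )$, the point $x=\gamma /t$ sweeps the punctured interval $(0,\gamma )$ if $\gamma >0$, or $(\gamma ,0)$ if $\gamma <0$. In either case it is an open real interval $J$ with $0$ as an endpoint and $J\subset (-1,1)$. The hypothesis, after multiplying by $t>0$, therefore says that $G_{\mathbf{p}}(x)=G_{\mathbf{q}}(x)$ for all $x\in J$. By linearity (Remark \ref{PropsTMI}), $\mathbf{d}=\mathbf{p}-\mathbf{q}$ then satisfies $G_{\mathbf{d}}(x)=\sum_{n}d_{n}x^{n}=0$ on $J$, provided the series converge there.

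The main obstacle is justifying convergence, so that $G_{\mathbf{d}}$ is a genuine power series with positive radius of convergence. The hypothesis only gives finiteness of $\sum_{n}p_{n}x^{n}$ at each $x\in J$, which I read as convergence of the series.
\begin{itemize}
\item Convergence at a point $x_{0}\in J$ forces the terms $p_{n}x_{0}^{n}$ to be bounded.
\item By the standard Abel lemma, the series then converges absolutely for every $|x|<|x_{0}|$.
\item Taking $x_{0}$ as close to $\gamma $ as desired, the radius of convergence $R$ of $G_{\mathbf{p}}$ satisfies $R\geq |\gamma |>0$, and the same holds for $\mathbf{q}$ and $\mathbf{d}$.
\end{itemize}
So $G_{\mathbf{d}}$ is real-analytic on $(-|\gamma |,|\gamma |)$, which contains $J$.

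To finish, $G_{\mathbf{d}}$ vanishes on the interval $J$, which accumulates at $0$, an interior point of the disc of convergence. By the identity theorem for power series, all coefficients vanish. Concretely, $G_{\mathbf{d}}$ is continuous at $0$ with limit $d_{0}$ along $J$, so $d_{0}=0$. Dividing by $x$ and repeating (or differentiating termwise, as in Equation (\ref{DerivForm1})) gives $n!\,d_{n}=G_{\mathbf{d}}^{(n)}(0)=0$ by induction. Hence $p_{n}=q_{n}$ for all $n\in \mathbb{N}$.
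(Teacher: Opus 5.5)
Your proof is correct apart from one slip, and it takes a genuinely different, more elementary route than the paper.

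\textbf{The slip.} For $t>|\gamma|$ the point $x=\gamma/t$ sweeps $(0,1)$ when $\gamma>0$ and $(-1,0)$ when $\gamma<0$. It does not sweep $(0,\gamma)$ or $(\gamma,0)$, as your own remark $J\subset(-1,1)$ already requires. So you only get a radius of convergence $R\geq 1$ and analyticity of $G_{\mathbf{d}}$ on $(-1,1)$. Your claim $R\geq|\gamma|$ is false in general when $|\gamma|>1$: take $p_n\equiv 1$ and $\gamma=2$; the integral is finite for every $t>2$, but $R=1$. Nothing downstream uses more than $R>0$ and the fact that $J$ lies in the disc of convergence with $0$ as an endpoint. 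So the identity-theorem step stands once the interval, the choice of $x_0$, and the radius bound are corrected.

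\textbf{Comparison with the paper.} The paper does not use the generating-function reading inside this proof; it only records $\mathcal{T}_{\gamma}^{s_n}(t)=\frac{1}{t}G(\gamma/t)$ afterwards. Its proof runs as follows.
\begin{enumerate}
\item It writes $a_n(t)=n!/t^{n+1}=\int_0^{\infty}x^ne^{-tx}\mu_1(dx)$.
\item It interchanges sum and integral by dominated convergence, separately under each of the conditions (a)--(c) of Definition \ref{TaylorMeasureIntegral}.
\item It recognizes $I_{\gamma,\mathbf{a}(t)}^{\mathbf{p},\mathbb{N}}$ as the Laplace transform of the exponential generating function $T_{\gamma x,\mathbf{p}}(\mathbb{N})=\sum_n p_n(\gamma x)^n/n!$.
\item It invokes uniqueness of the Laplace transform to get equality of these functions for all $x>0$, and only then compares power-series coefficients.
\end{enumerate}
What that route buys is the Laplace representation itself. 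This is the paper's motivation for viewing $\mathcal{T}_{\gamma,t}$ as the uniqueness-determining measure, in analogy with $e^{-tx}\mu_1(dx)$ for the Laplace transform. What it costs is a three-case domination argument, an external uniqueness theorem, and a tacit step: that finiteness of the integral forces one of the merely sufficient conditions (a)--(c) to hold.

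Your argument needs only ordinary convergence of $\sum_n p_n(\gamma/t)^n$ for $t>|\gamma|$, and in fact only along a sequence $t_k\to\infty$. Together with Abel's lemma and the identity theorem for power series, that suffices. It is therefore shorter and works under weaker hypotheses.
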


\begin{proof}
First note that
\begin{equation*}
\int\limits_{0}^{+\infty }x^{n}e^{-tx}\mu _{1}(dx)\overset{y=tx}{=}\frac{1}{%
t^{n+1}}\int\limits_{0}^{+\infty }y^{n}e^{-y}\mu _{1}(dy)=\frac{\Gamma (n+1)%
}{t^{n+1}}=\frac{n!}{t^{n+1}}=a_{n}(t),
\end{equation*}%
for all $t>0$. Since $I_{\gamma ,\mathbf{a}(t)}^{\mathbf{p},\mathbb{N}%
}=I_{\gamma ,\mathbf{a}(t)}^{\mathbf{q},\mathbb{N}}<+\infty ,$ then one of
the three conditions (a)-(c) of Definition \ref{TaylorMeasureIntegral}
holds, and therefore%
\begin{equation*}
I_{\gamma ,\mathbf{a}(t)}^{\mathbf{p},\mathbb{N}}=\frac{1}{t}\sum_{n\in
\mathbb{N}}p_{n}\left( \frac{\gamma }{t}\right) ^{n}=\frac{1}{t}\sum_{n\in
\mathbb{N}}p_{n}\left( \frac{\gamma }{t}\right) ^{n}<+\infty ,
\end{equation*}%
and%
\begin{equation*}
I_{\gamma ,\mathbf{a}(t)}^{\mathbf{q},\mathbb{N}}=\frac{1}{t}\sum_{n\in
\mathbb{N}}q_{n}\left( \frac{\gamma }{t}\right) ^{n}<+\infty ,
\end{equation*}%
for all $t>0.$ In addition, we notice that the integrator of both integrals $%
I_{\gamma ,\mathbf{a}(t)}^{\mathbf{p},\mathbb{N}}$ and $I_{\gamma ,\mathbf{a}%
(t)}^{\mathbf{q},\mathbb{N}}$\ is the Taylor measure%
\begin{equation*}
T_{\gamma ,\mathbf{a}(t)}(\mathbb{N})=\sum_{n\in \mathbb{N}}\frac{n!}{t^{n+1}%
}\frac{\gamma ^{n}}{n!}=\frac{1}{t}\sum_{n\in \mathbb{N}}(\gamma
/t)^{n}<+\infty ,
\end{equation*}%
which is well defined since $t>\left\vert \gamma \right\vert .$\newline
Recall Dominated Convergence Theorem (DCT). More precisely, for any sequence
of functions $f_{n}(x),$ assume that there exists a sequence of functions $%
h_{n}(x)$ such that
\begin{equation*}
\left\vert f_{n}(x)\right\vert \leq h_{n}(x),\forall n\in \mathbb{N}\text{
and almost all }x\in \Re ,
\end{equation*}%
and in addition, assume that%
\begin{equation}
\sum\limits_{n\in \mathbb{N}}h_{n}(x)<+\infty ,  \label{DCTCond2}
\end{equation}%
and%
\begin{equation}
\int\limits_{\Re }h_{n}(x)\mu _{1}(dx)<+\infty .  \label{DCTCond3}
\end{equation}%
Then we can swap the order of summation and integration and write%
\begin{equation*}
\sum\limits_{n\in \mathbb{N}}\int\limits_{\Re }f_{n}(x)\mu
_{p}(dx)=\int\limits_{\Re }\sum\limits_{n\in \mathbb{N}}f_{n}(x)\mu _{p}(dx).
\end{equation*}%
Let%
\begin{equation*}
f_{n}(x)=p_{n}\frac{\gamma ^{n}}{n!}x^{n}e^{-tx},
\end{equation*}%
for $x>0$.\newline
\textbf{Under condition (a):} Assume that%
\begin{equation*}
\left\vert p_{n}a_{n}(t)\right\vert \leq M\Rightarrow \left\vert
p_{n}n!/t^{n+1}\right\vert \leq M\Rightarrow \left\vert p_{n}\right\vert
\leq \frac{Mt^{n+1}}{n!},
\end{equation*}%
$\forall n\in \mathbb{N}$, and for some $M>0.$ Consequently,%
\begin{equation*}
\left\vert f_{n}(x)\right\vert \leq \left\vert p_{n}\frac{\gamma ^{n}}{n!}%
x^{n}e^{-tx}\right\vert \leq \frac{\left\vert p_{n}\right\vert \left\vert
\gamma \right\vert ^{n}}{n!}x^{n}e^{-tx}\leq \frac{\left\vert
p_{n}\right\vert t^{n}}{n!}x^{n}e^{-tx}\leq \frac{Mt^{2n+1}}{\left(
n!\right) ^{2}}x^{n}e^{-tx}=h_{n}(x),
\end{equation*}%
where $t>\left\vert \gamma \right\vert ,$ it follows that%
\begin{equation*}
\int\limits_{\Re }h_{n}(x)\mu _{1}(dx)=\int\limits_{0}^{+\infty }\frac{%
Mt^{2n+1}}{\left( n!\right) ^{2}}x^{n}e^{-tx}\mu _{1}(dx)=\frac{Mt^{2n+1}}{%
\left( n!\right) ^{2}}\int\limits_{0}^{+\infty }x^{n}e^{-tx}\mu
_{1}(dx)<+\infty ,
\end{equation*}%
and%
\begin{eqnarray*}
\sum\limits_{n\in \mathbb{N}}h_{n}(x) &=&\sum\limits_{n\in \mathbb{N}}\frac{%
Mt^{2n+1}}{\left( n!\right) ^{2}}x^{n}e^{-tx}=Mte^{-tx}\sum\limits_{n\in
\mathbb{N}}\frac{\left( t^{2}x\right) ^{n}}{\left( n!\right) ^{2}}\leq
Mte^{-tx}\sum\limits_{n\in \mathbb{N}}\frac{\left( t^{2}x\right) ^{n}}{n!} \\
&=&Mte^{-tx+t^{2}x}<+\infty ,
\end{eqnarray*}%
such that DCT applies.\newline
\textbf{Under condition (b):} Now if $p_{n}a_{n}(t)\thicksim Ma^{n},$ for
some $M,a\in \Re $, then there exists constant $C>0$ such that%
\begin{equation*}
\left\vert p_{n}a_{n}(t)\right\vert \leq M\Rightarrow \left\vert
p_{n}n!/t^{n+1}\right\vert \leq Ma^{n}\Rightarrow \left\vert
p_{n}\right\vert \leq \frac{Ma^{n}t^{n+1}}{n!},
\end{equation*}%
and therefore%
\begin{equation*}
\left\vert f_{n}(x)\right\vert \leq \left\vert p_{n}\frac{\gamma ^{n}}{n!}%
x^{n}e^{-tx}\right\vert \leq \frac{\left\vert p_{n}\right\vert t^{n}}{n!}%
x^{n}e^{-tx}\leq \frac{Ma^{n}t^{2n+1}}{\left( n!\right) ^{2}}%
x^{n}e^{-tx}=h_{n}(x).
\end{equation*}%
Then it follows that
\begin{equation*}
\int\limits_{\Re }h_{n}(x)\mu _{1}(dx)=\int\limits_{0}^{+\infty }\frac{%
Ma^{n}t^{2n+1}}{\left( n!\right) ^{2}}x^{n}e^{-tx}\mu _{1}(dx)=\frac{%
Ma^{n}t^{2n+1}}{\left( n!\right) ^{2}}\int\limits_{0}^{+\infty
}x^{n}e^{-tx}\mu _{1}(dx)<+\infty ,
\end{equation*}%
and%
\begin{eqnarray*}
\sum\limits_{n\in \mathbb{N}}h_{n}(x) &=&\sum\limits_{n\in \mathbb{N}}\frac{%
Ma^{n}t^{2n+1}}{\left( n!\right) ^{2}}x^{n}e^{-tx}=Mte^{-tx}\sum\limits_{n%
\in \mathbb{N}}\frac{\left( at^{2}x\right) ^{n}}{\left( n!\right) ^{2}}\leq
Mte^{-tx}\sum\limits_{n\in \mathbb{N}}\frac{\left( at^{2}x\right) ^{n}}{n!}
\\
&=&Mte^{-tx+at^{2}x}<+\infty ,
\end{eqnarray*}%
such that DCT once again applies.\newline
\textbf{Under condition (c):} Define%
\begin{equation*}
L:=\underset{n\rightarrow +\infty }{\lim }\frac{\left\vert \gamma
p_{n+1}a_{n+1}(t)\right\vert }{(n+1)\left\vert p_{n}a_{n}(t)\right\vert }<1,
\end{equation*}%
and write
\begin{equation*}
\frac{|\gamma p_{n+1}a_{n+1}(t)|}{(n+1)|p_{n}a_{n}(t)|}=\frac{|\gamma
|\,|p_{n+1}|\dfrac{(n+1)!}{t^{n+2}}}{(n+1)|p_{n}|\dfrac{n!}{t^{n+1}}}=\frac{%
|\gamma |}{t}\frac{|p_{n+1}|}{|p_{n}|},
\end{equation*}%
such that
\begin{equation*}
L=\lim_{n\rightarrow \infty }\frac{|\gamma |}{t}\frac{|p_{n+1}|}{|p_{n}|}%
\Rightarrow \lim_{n\rightarrow \infty }\frac{|p_{n+1}|}{|p_{n}|}=\frac{tL}{%
|\gamma |}.
\end{equation*}%
Since $L<1$, we have
\begin{equation*}
\frac{tL}{|\gamma |}<\frac{t}{|\gamma |},
\end{equation*}%
so that we take $r\in \Re $ be any number satisfying
\begin{equation*}
\frac{tL}{|\gamma |}<r<\frac{t}{|\gamma |}.
\end{equation*}%
By the definition of the limit, there exists $N\in \mathbb{N}$ such that
\begin{equation*}
\frac{|p_{n+1}|}{|p_{n}|}<r,
\end{equation*}%
for all $n\geq N,$ and iterating this inequality gives
\begin{equation*}
\left\vert p_{n}\right\vert \leq \left\vert p_{N}\right\vert r^{n-N},
\end{equation*}%
for all $n\geq N.$ Thus, setting
\begin{equation*}
C:=\left\vert p_{N}\right\vert r^{-N},
\end{equation*}%
then
\begin{equation*}
\left\vert p_{n}\right\vert \leq Cr^{n},
\end{equation*}%
for all $n\geq N,$ and therefore, $p_{n}$ grows at most exponentially, or
more precisely
\begin{equation*}
\limsup_{n\rightarrow +\infty }\left\vert p_{n}\right\vert ^{\frac{1}{n}}=%
\frac{tL}{|\gamma |}<\frac{t}{|\gamma |}.
\end{equation*}%
We can rewrite the latter equivalently, using the standard definition of the
limit, i.e., for every $\varepsilon >0$, there exists a constant $%
C_{\varepsilon }>0$ such that
\begin{equation*}
\left\vert p_{n}\right\vert \leq C_{\varepsilon }\left( \frac{tL}{|\gamma |}%
+\varepsilon \right) ^{n}.
\end{equation*}%
Thus, for all $\varepsilon >0$ we have%
\begin{equation*}
\left\vert f_{n}(x)\right\vert \leq \left\vert p_{n}\frac{\gamma ^{n}}{n!}%
x^{n}e^{-tx}\right\vert \leq \frac{\left\vert p_{n}\right\vert t^{n}}{n!}%
x^{n}e^{-tx}\leq C_{\varepsilon }\left( \frac{tL}{|\gamma |}+\varepsilon
\right) ^{n}\frac{t^{n}}{n!}x^{n}e^{-tx}=h_{n}(x).
\end{equation*}%
Then it follows that
\begin{eqnarray*}
\int\limits_{\Re }h_{n}(x)\mu _{1}(dx) &=&\int\limits_{0}^{+\infty
}C_{\varepsilon }\left( \frac{tL}{|\gamma |}+\varepsilon \right) ^{n}\frac{%
t^{n}}{n!}x^{n}e^{-tx}\mu _{1}(dx) \\
&=&C_{\varepsilon }\left( \frac{tL}{|\gamma |}+\varepsilon \right) ^{n}\frac{%
t^{n}}{n!}\int\limits_{0}^{+\infty }x^{n}e^{-tx}\mu _{1}(dx)<+\infty ,
\end{eqnarray*}%
and%
\begin{eqnarray*}
\sum\limits_{n\in \mathbb{N}}h_{n}(x) &=&\sum\limits_{n\in \mathbb{N}%
}C_{\varepsilon }\left( \frac{tL}{|\gamma |}+\varepsilon \right) ^{n}\frac{%
t^{n}}{n!}x^{n}e^{-tx}=C_{\varepsilon }e^{-tx}\sum\limits_{n\in \mathbb{N}}%
\frac{\left( tx\left( \frac{tL}{|\gamma |}+\varepsilon \right) \right) ^{n}}{%
n!} \\
&=&C_{\varepsilon }e^{tx\left( \frac{tL}{|\gamma |}+\varepsilon -1\right)
}<+\infty ,
\end{eqnarray*}%
so that DCT can be applied in this case as well.\newline
\textbf{Applying the DCT:} After an appeal to Dominated Convergence Theorem,
we can write the TI $I_{\gamma ,\mathbf{a}(t)}^{\mathbf{p},\mathbb{N}}$\ as%
\begin{eqnarray*}
I_{\gamma ,\mathbf{a}(t)}^{\mathbf{p},\mathbb{N}} &=&\sum\limits_{n\in
\mathbb{N}}p_{n}a_{n}(t)\frac{\gamma ^{n}}{n!}=\sum\limits_{n\in \mathbb{N}%
}p_{n}\frac{\gamma ^{n}}{n!}\int\limits_{0}^{+\infty }x^{n}e^{-tx}\mu
_{1}(dx) \\
&=&\int\limits_{0}^{+\infty }\sum\limits_{n\in \mathbb{N}}p_{n}\frac{\gamma
^{n}}{n!}x^{n}e^{-tx}\mu _{1}(dx)=\int\limits_{0}^{+\infty }\left[
\sum\limits_{n\in \mathbb{N}}p_{n}\frac{(\gamma x)^{n}}{n!}\right]
e^{-tx}\mu _{1}(dx),
\end{eqnarray*}%
i.e., the Laplace transform of the Taylor measure $T_{\gamma x,\mathbf{p}}(%
\mathbb{N})$ is given by%
\begin{equation*}
I_{\gamma ,\mathbf{a}(t)}^{\mathbf{p},\mathbb{N}}=\int\limits_{0}^{+\infty
}T_{\gamma x,\mathbf{p}}(\mathbb{N})e^{-tx}dx.
\end{equation*}%
By the assumption $I_{\gamma ,\mathbf{a}(t)}^{\mathbf{p},\mathbb{N}%
}=I_{\gamma ,\mathbf{a}(t)}^{\mathbf{q},\mathbb{N}},$ for all $t>0,$ and
uniqueness of the Laplace transform we must have%
\begin{equation*}
T_{\gamma x,\mathbf{p}}(\mathbb{N})=T_{\gamma x,\mathbf{q}}(\mathbb{N}),
\end{equation*}%
for all $x>0$. Thus%
\begin{equation*}
\sum\limits_{n\in \mathbb{N}}p_{n}\frac{\gamma ^{n}}{n!}x^{n}=\sum\limits_{n%
\in \mathbb{N}}q_{n}\frac{\gamma ^{n}}{n!}x^{n},
\end{equation*}%
and rewriting the latter as a series in $x$, we have%
\begin{equation*}
\sum\limits_{n\in \mathbb{N}}\left[ p_{n}\frac{\gamma ^{n}}{n!}-q_{n}\frac{%
\gamma ^{n}}{n!}\right] x^{n}=0,
\end{equation*}%
for all $x>0$, which leads to%
\begin{equation*}
p_{n}\frac{\gamma ^{n}}{n!}=q_{n}\frac{\gamma ^{n}}{n!}\Rightarrow
p_{n}=q_{n},
\end{equation*}%
for all $n\in \mathbb{N}.$
\end{proof}

In view of the latter theorem, the Taylor measure defined by%
\begin{equation*}
\mathcal{T}_{\gamma ,t}(B):=T_{\gamma ,\mathbf{a}(t)}(B)=\sum\limits_{n\in B}%
\frac{n!}{t^{n+1}}\frac{\gamma ^{n}}{n!}=\frac{1}{t}\sum\limits_{n\in
B}(\gamma /t)^{n},
\end{equation*}%
with%
\begin{equation*}
\mathcal{T}_{\gamma ,t}(\mathbb{N})=\frac{1}{t}\sum\limits_{n\in \mathbb{N}%
}(\gamma /t)^{n}=\frac{t}{t-\gamma }\frac{1}{t}=\frac{1}{t-\gamma },
\end{equation*}%
where $a_{n}(t)=n!/t^{n+1},$ $n\in \mathbb{N},$ for some $\gamma \neq 0,\ $%
provided that $\left\vert t\right\vert >\left\vert \gamma \right\vert ,$
emerges as of particular importance, since it acts as the integrating
measure required by the TI to identify uniqueness, in a similar fashion as
the measure $e^{-tx}\mu _{1}(dx)$ is required by the Laplace transform or $%
e^{-itx}\mu _{1}(dx)$ for the Fourier transform. The latter observation
leads to the following definition.

\begin{definition}[Discrete Taylor Transformation]
The discrete Taylor transformation\newline
(DTT) of a sequence\label{TMITransform} $s_{n}:\mathbb{N}\rightarrow \mathbb{%
M},$ is given by the Taylor integral of $\mathbf{s}$\ with respect to $%
\mathcal{T}_{\gamma ,t},$ defined by%
\begin{equation}
\mathcal{T}_{\gamma ,B}^{s_{n}}(t):=\int\limits_{B}s_{n}\mathcal{T}_{\gamma
,t}(dn)=\frac{1}{t}\sum\limits_{n\in B}s_{n}\frac{\gamma ^{n}}{t^{n}},
\label{DTT1}
\end{equation}%
for any $B\in \mathcal{B}(\mathbb{N}),$ $\gamma \neq 0,$ and for all $t\in
\mathbb{M},$ such that $\left\vert t\right\vert >\left\vert \gamma
\right\vert .$ When $B=\mathbb{N}$, we drop the $B$ from the DTT notation
and write%
\begin{equation}
\mathcal{T}_{\gamma }^{s_{n}}(t):=\int\limits_{\mathbb{N}}s_{n}\mathcal{T}%
_{\gamma ,t}(dn)=\frac{1}{t}\sum\limits_{n\in \mathbb{N}}s_{n}\frac{\gamma
^{n}}{t^{n}}.  \label{DTT2}
\end{equation}%
The DTT for finite $B$ exists always, and for countably infinite $B$ it
exists and is finite provided that one of the following conditions hold:%
\newline
a) $s_{n}$ are uniformly bounded, i.e., $|s_{n}|\leq M,$ $\forall n\in
\mathbb{N}$, for some $M>0,$ or\newline
b) $s_{n}$ are asymptotically equivalent to $Ma^{n}$, i.e., $s_{n}\thicksim
Ma^{n},$ for some $M,a\in \Re $, or\newline
c) if $s_{n}\neq 0,$ $\forall n>n_{0}\in \mathbb{N},$ then $\underset{%
n\rightarrow +\infty }{\lim }\frac{\left\vert \gamma s_{n+1}\right\vert }{%
\left\vert s_{n}\right\vert }<1,$ guarantees absolute convergence.\newline
In addition, if $\gamma _{k}\neq 0,$ $\forall k\in B,$ is a sequence (or
vector) the DTT $\mathcal{T}_{\gamma _{k},B}^{s_{n}}(t),$ defines a new
sequence (or vector) of measurable functions $\{\mathcal{T}_{\gamma
_{k,B}}^{s_{n}}(t)\}_{k\in B}$ which will be called the DTT of the sequence $%
\mathbf{s}$ driven by the sequence (or vector) $\mathbf{\gamma }=(\gamma
_{0},\gamma _{1},...).$
\end{definition}

Note that the DTT is related to the OGF $G$\ of a real sequence $\mathbf{s},$
via%
\begin{equation}
\mathcal{T}_{\gamma }^{s_{n}}(t)=\frac{1}{t}G(\gamma /t)\Leftrightarrow G(x)=%
\frac{\gamma }{x}\mathcal{T}_{\gamma }^{s_{n}}(\frac{\gamma }{x})=t\mathcal{T%
}_{\gamma }^{s_{n}}(t),  \label{DTTtoOGF}
\end{equation}%
for $x=\gamma /t\neq 0$, and therefore this important tool for sequences is
a special case of the DTT. Importantly, the DTT converts mathematical
results about sequences into problems about functions, allowing us to use
function manipulation to solve sequence-related problems, including finding
closed form solutions to difference equations, as well as manipulate a
series without needing to worry about whether the series converges for a
specific value of $t$.

For the general domain of convergence of the DTT $\mathcal{T}_{\gamma
}^{s_{n}}(t)$, we note that given the radius of convergence $R_{s}$ of the
generating function%
\begin{equation*}
G_{s}(z)=\sum_{n=0}^{\infty }s_{n}z^{n},
\end{equation*}%
such that $\mathcal{T}_{\gamma }^{s_{n}}(t)$ exists whenever%
\begin{equation*}
\left\vert \frac{\gamma }{t}\right\vert <R_{s}\Rightarrow \left\vert
t\right\vert >\frac{\left\vert \gamma \right\vert }{R_{s}},
\end{equation*}%
and in terms of $\mathcal{T}_{\gamma _{k},B}^{s_{n}}(t),$ we have that $t\in
\mathbb{T}_{B},$ where%
\begin{equation}
\mathbb{T}_{B}=\left\{ t\in \mathbb{M}:\left\vert t\right\vert >\frac{%
\underset{k\in B}{\max }\left\vert \gamma _{k}\right\vert }{R_{s}}\right\} ,
\label{Tspace1}
\end{equation}%
For $B=\mathbb{N},$ the DTT sequence thus created is $\{\mathcal{T}_{\gamma
_{k}}^{s_{n}}(t)\}_{k\in \mathbb{N}}$ and we write $\mathbb{T}$ for $\mathbb{%
T}_{\mathbb{N}}.$

\subsection{Properties of the DTT}

We collect properties of DTTs next.

\begin{remark}[Properties of $\mathcal{T}_{\protect\gamma }^{f_{n}}(t)$]
\label{PropsDTTs}Consider any $B\in \mathcal{B}(\mathbb{N}),$ and any
sequences $f_{n},g_{n}:$ $\mathbb{N}\rightarrow \Re ,$ with $\mathbf{f}%
=(f_{0},f_{1},...)$ and $\mathbf{g}=(g_{0},g_{1},...).$ Unless otherwise
stated, in all the statements that follow we assume that they hold for all $%
\left\vert t\right\vert >\left\vert \gamma \right\vert .$ Note that
everything that follows holds for DTTs when the constant $\gamma $ is
replaced by a sequence $\gamma _{k}\neq 0,$ and keeping $k$ fixed, with $%
t\in \mathbb{T}$. There are many other properties that are derived via
results on generating functions that will be omitted. In addition, since the
DTT is a TI, it satisfies all properties of Remark \ref{PropsTMI}.

\begin{enumerate}
\item Uniqueness: In view of Theorem \ref{UniqDTT}, we have%
\begin{equation}
\mathcal{T}_{\gamma }^{f_{n}}(t)=\mathcal{T}_{\gamma }^{g_{n}}(t),\forall
\left\vert t\right\vert >\left\vert \gamma \right\vert \Rightarrow
f_{n}=g_{n},\forall n\in \mathbb{N},  \label{DTTUniq1}
\end{equation}%
for $\gamma \neq 0.$

\item Differentiation: Assuming we can swap the differentiation and
summation signs (i.e., the sum of derivatives converges), we have%
\begin{equation*}
\frac{d}{dt}\mathcal{T}_{\gamma ,B}^{f_{n}}(t)=\sum\limits_{n\in B}s_{n}%
\frac{d}{dt}\left( \frac{\gamma ^{n}}{t^{n}}\right) =-\sum\limits_{n\in
B}ns_{n}\frac{\gamma ^{n}}{t^{n+1}}=-\frac{1}{t}\sum\limits_{n\in B}ns_{n}%
\frac{\gamma ^{n}}{t^{n}}=-\mathcal{T}_{\gamma ,B}^{nf_{n}}(t).
\end{equation*}

\item Scaling: For any $a\neq 0,$ we have%
\begin{equation*}
\mathcal{T}_{\gamma ,B}^{\mathbf{f}}(t/a)=\frac{a}{t}\sum\limits_{n\in
B}a^{n}f_{n}\frac{\gamma ^{n}}{t^{n}}=\frac{a}{t}\sum\limits_{n\in B}f_{n}%
\frac{(a\gamma )^{n}}{t^{n}}=a\mathcal{T}_{a\gamma ,B}^{\mathbf{f}}(t).
\end{equation*}

\item Binomial convolution: The DTT of the convolution of $\mathbf{f}$ and $%
\mathbf{g}$ defined by $p_{n}=\sum\limits_{k=0}^{n}\mathcal{C}_{k}^{n}$ $%
f_{k}g_{n-k},$ is expressed as a component-wise product of the DTTs $%
\mathcal{T}_{\gamma }^{f_{n}}(t)$ and $\mathcal{T}_{\gamma }^{g_{n}}(t)$ via%
\begin{eqnarray*}
\mathcal{T}_{\gamma }^{f_{n}}(t)\mathcal{T}_{\gamma }^{g_{n}}(t) &=&\frac{1}{%
t^{2}}\sum\limits_{n\in \mathbb{N}}f_{n}\frac{\gamma ^{n}}{t^{n}}%
\sum\limits_{k\in \mathbb{N}}g_{k}\frac{\gamma ^{k}}{t^{k}}=\frac{1}{t^{2}}%
\sum\limits_{n\in \mathbb{N}}\sum\limits_{k=0}^{n}\mathcal{C}_{k}^{n}f_{k}%
\frac{\gamma ^{k}}{t^{k}}g_{n-k}\frac{\gamma ^{n-k}}{t^{n-k}} \\
&=&\frac{1}{t^{2}}\sum\limits_{n\in \mathbb{N}}\frac{\gamma ^{n}}{t^{n}}%
\sum\limits_{k=0}^{n}\mathcal{C}_{k}^{n}f_{k}g_{n-k}=\frac{1}{t^{2}}%
\sum\limits_{n\in \mathbb{N}}p_{n}\frac{\gamma ^{n}}{t^{n}},
\end{eqnarray*}%
and therefore%
\begin{equation*}
\mathcal{T}_{\gamma }^{f_{n}}(t)\mathcal{T}_{\gamma }^{g_{n}}(t)=\frac{1}{t}%
\mathcal{T}_{\gamma }^{p_{n}}(t).
\end{equation*}

\item Inversion via differentiation: Given a DTT $\mathcal{T}_{\gamma
}^{f_{n}}(t)$ we can recover the elements of the sequence $\mathbf{f}$ as
follows; since the DTT is a generator function, consider%
\begin{equation}
P_{\mathbf{f}}(t):=\gamma \mathcal{T}_{\gamma }^{f_{n}}(\gamma
/t)=t\sum\limits_{n=0}^{+\infty }f_{n}t^{n}=f_{0}t+f_{1}t^{2}+f_{2}t^{3}+...,
\label{Poly1}
\end{equation}%
and take derivative with respect to $t$ repeatedly to obtain%
\begin{equation}
f_{k}=\frac{1}{k!}\left[ \frac{d^{k+1}}{dt^{k+1}}\left( \gamma \mathcal{T}%
_{\gamma }^{f_{n}}(\gamma /t)\right) \right] _{t=0}=\frac{1}{k!}\left[ \frac{%
d^{k+1}}{dt^{k+1}}P_{\mathbf{f}}(t)\right] _{t=0},  \label{Inverse1}
\end{equation}%
for all $k\in \mathbb{N}$.

\item Inversion using Cauchy's coefficient formula: Consider a simple,
nonintersecting closed curve $\mathcal{C}\subset \mathbb{C}$ traversed
counterclockwise, and recall Cauchy's integral equation (e.g., \cite%
{keener2018principles}, Chapter 6). Suppose that $f(z)$ is analytic
everywhere inside $\mathcal{C}$. Then for any point $z$ inside $\mathcal{C}$
we can write%
\begin{equation}
f(z)=\frac{1}{2\pi i}\oint\limits_{\mathcal{C}}\frac{f(\xi )}{\xi -z}d\xi ,
\label{Cauchy1}
\end{equation}%
with the $n$th derivative written as%
\begin{equation}
f^{(n)}(z)=\frac{n!}{2\pi i}\oint\limits_{\mathcal{C}}\frac{f(\xi )}{\left(
\xi -z\right) ^{n+1}}d\xi .  \label{Cauchy2}
\end{equation}%
Now take $f(z)=P_{\mathbf{f}}(z),$ so that Equation (\ref{Cauchy2}) yields
an equivalent form of (\ref{Inverse1}) given by%
\begin{equation*}
f_{k}=\frac{1}{k!}P_{\mathbf{f}}^{(k+1)}(0)=\frac{1}{2\pi i}\oint\limits_{%
\mathcal{C}}\frac{P_{\mathbf{f}}(\xi )}{\xi ^{k+2}}d\xi .
\end{equation*}%
If the curve $\mathcal{C}$ is a circle, $\xi =z+re^{i\theta },$ Equation (%
\ref{Cauchy1}) leads to Gauss' Mean-Value Theorem%
\begin{equation*}
P_{\mathbf{f}}(z)=\frac{1}{2\pi }\int\limits_{0}^{2\pi }P_{\mathbf{f}%
}(z+re^{i\theta })d\theta ,
\end{equation*}%
so that in terms of the DTT we have%
\begin{equation*}
\mathcal{T}_{\gamma }^{f_{n}}(\gamma /z)=\frac{1}{2\pi }\int\limits_{0}^{2%
\pi }\mathcal{T}_{\gamma }^{f_{n}}\left( \frac{\gamma }{z+re^{i\theta }}%
\right) d\theta .
\end{equation*}%
Moreover, the inversion equation over the circle $re^{i\theta },$ in terms
of the DTT, becomes%
\begin{equation}
f_{k}=\frac{\gamma }{2\pi r^{k+1}}\int\limits_{0}^{2\pi }\mathcal{T}_{\gamma
}^{f_{n}}\left( \frac{\gamma }{r}e^{-i\theta }\right) e^{-(k+1)i\theta
}d\theta .  \label{Inverse2}
\end{equation}

\item Hadamard product: The dimension-wise product $\mathbf{f}\cdot \mathbf{g%
}=[f_{0}g_{0},f_{1}g_{1},...]$ of two sequences has DTT given by%
\begin{equation*}
\mathcal{T}_{\gamma }^{f_{n}g_{n}}(t)=\frac{1}{t}\sum\limits_{n\in \mathbb{N}%
}f_{n}g_{n}\frac{\gamma ^{n}}{t^{n}},
\end{equation*}%
and it is related to the DTTs $\mathcal{T}_{\gamma }^{f_{n}}(t)$ and $%
\mathcal{T}_{\gamma }^{g_{n}}(t),$ as follows; using Equation (\ref{Inverse1}%
) we can write%
\begin{eqnarray*}
\mathcal{T}_{\gamma }^{f_{n}g_{n}}(t) &=&\frac{1}{t}\sum\limits_{n\in
\mathbb{N}}\frac{1}{\left( n!\right) ^{2}}\left[ \frac{d^{n+1}}{du^{n+1}}P_{%
\mathbf{f}}(u)\right] _{u=0}\left[ \frac{d^{n+1}}{dv^{n+1}}P_{\mathbf{g}}(v)%
\right] _{v=0}\frac{\gamma ^{n}}{t^{n}} \\
&=&\mathcal{T}_{\gamma }^{\frac{1}{\left( n!\right) ^{2}}\left[ \frac{d^{n+1}%
}{du^{n+1}}P_{\mathbf{f}}(u)\right] _{u=0}\left[ \frac{d^{n+1}}{dv^{n+1}}P_{%
\mathbf{g}}(v)\right] _{v=0}}(t).
\end{eqnarray*}%
In terms of the OGFs $F$ and $G$ of the sequences $\mathbf{f}$ and $\mathbf{g%
},$ the Hadamard product $G_{1}\odot G_{2}$ is defined by%
\begin{equation*}
(G_{1}\odot G_{2})(z):=\sum\limits_{n\in \mathbb{N}}f_{n}g_{n}z^{n}=\frac{1}{%
2\pi }\int\limits_{0}^{2\pi }G_{1}\left( \sqrt{z}e^{iu}\right) G_{2}\left(
\sqrt{z}e^{-iu}\right) \mu _{1}(du),
\end{equation*}%
where $z\in \mathbb{C}$, and using (\ref{DTTtoOGF}), we have%
\begin{equation*}
(G_{1}\odot G_{2})(z)=\sum\limits_{n\in \mathbb{N}}f_{n}g_{n}z^{n}=\frac{%
\gamma }{z}\mathcal{T}_{\gamma }^{f_{n}g_{n}}\left( \frac{\gamma }{z}\right)
,
\end{equation*}%
$\gamma \in \Re ,$ and therefore the Hadamard product is a special case of
the DTT. Because $G_{1}(z)=\frac{\gamma }{z}\mathcal{T}_{\gamma }^{f_{n}}(%
\frac{\gamma }{z}),$ and $G_{2}(z)=\frac{\gamma }{z}\mathcal{T}_{\gamma
}^{g_{n}}(\frac{\gamma }{z}),$ the DTT of the product of two sequences is
given by%
\begin{equation}
\mathcal{T}_{\gamma }^{f_{n}g_{n}}(t)=\frac{\gamma }{2\pi }%
\int\limits_{0}^{2\pi }\mathcal{T}_{\gamma }^{f_{n}}(\sqrt{\gamma t}e^{-iu})%
\mathcal{T}_{\gamma }^{g_{n}}(\sqrt{\gamma t}e^{iu})\mu _{1}(du),
\label{ProductDTT}
\end{equation}%
where $t=\gamma /z\in \mathbb{C}$, and as a result, the DTT of the square of
a sequence is given by%
\begin{equation}
\mathcal{T}_{\gamma }^{f_{n}^{2}}(t)=\frac{1}{t}\sum\limits_{n\in \mathbb{N}%
}f_{n}^{2}\frac{\gamma ^{n}}{t^{n}}=\frac{\gamma }{2\pi }\int\limits_{0}^{2%
\pi }\mathcal{T}_{\gamma }^{f_{n}}(\sqrt{\gamma t}e^{-iu})\mathcal{T}%
_{\gamma }^{f_{n}}(\sqrt{\gamma t}e^{iu})\mu _{1}(du).  \label{SquareDTT}
\end{equation}

\item Jensen-Mercer inequality: For any measurable function $h,$ the DTT of
the sequence $h(f_{n})/n!$ is connected to a Taylor measure via%
\begin{equation*}
T_{\gamma /t,\mathbf{h}/t}(B)=\sum\limits_{n\in B}\frac{h(f_{n})}{t}\frac{%
(\gamma /t)^{n}}{n!}=\frac{1}{t}\sum\limits_{n\in B}\frac{h(f_{n})}{n!}\frac{%
\gamma ^{n}}{t^{n}}=\mathcal{T}_{\gamma ,B}^{h(f_{n})/n!}(t),
\end{equation*}%
where $\mathbf{h}=(h(f_{0}),h(f_{1}),h(f_{2}),...)$ and $\mathbf{h}%
/t:=(h(f_{0})/t,h(f_{1})/t,h(f_{2})/t,...)$. Moreover, the transformed
sequence $h(f_{n})$ has DTT given by%
\begin{equation*}
\mathcal{T}_{\gamma ,B}^{h(f_{n})}(t)=\frac{1}{t}\sum\limits_{n\in B}h(f_{n})%
\frac{\gamma ^{n}}{t^{n}}=\frac{1}{t}\sum\limits_{n\in B}n!h(f_{n})\frac{%
(\gamma /t)^{n}}{n!}=\frac{1}{t}T_{\gamma /t,\mathbf{n!}\cdot \mathbf{h}}(B).
\end{equation*}%
When $h$ is convex, and $\mathbf{f}$ is integrable $[\mathcal{T}_{\gamma
,t}] $, taking $B=\mathbb{N}$, an appeal to the signed measure theoretic
version of the Jensen-Mercer inequality (see \cite{mercer2003variant}, \cite%
{horvath2024refining}, and \cite{horvath2025integral} for details and
specific versions of the inequality, along with all the conditions
required), allows us to write%
\begin{equation}
h\left( a+b-\frac{1}{\mathcal{T}_{\gamma ,t}(\mathbb{N})}\mathcal{T}_{\gamma
,B}^{f_{n}}(t)\right) \leq h(a)+h(b)-\frac{1}{\mathcal{T}_{\gamma ,t}(%
\mathbb{N})}\mathcal{T}_{\gamma ,B}^{h(f_{n})}(t),  \label{Jensen}
\end{equation}%
provided that $\mathcal{T}_{\gamma ,t}(\mathbb{N})>0,$ and $f_{n}\in \lbrack
a,b]$, for all $n\in \mathbb{N}.$

\item Matrix Representation: Take $B_{N}=\{0,1,...,N-1\}\in \mathcal{B}(%
\mathbb{N}).$ The DTT $\mathcal{T}_{\gamma _{k},B_{N}}^{f_{n}}(1)=$ $%
\sum\limits_{n=0}^{N-1}f_{n}\gamma _{k}^{n},$ $k\in B_{N}$, of the sequence $%
\mathbf{f}$ driven by the sequence $\mathbf{\gamma }=(\gamma _{0},$ $\gamma
_{1},...),$ at $t=1$, can be expressed in terms of matrices as follows;
define the Vandermonde matrix%
\begin{equation}
\mathbf{\Gamma }_{N}=\mathbb{V}(\gamma _{0},...,\gamma _{N-1}):=\left[
\begin{tabular}{lllll}
$\gamma _{0}^{0}$ & $\gamma _{0}^{1}$ & $...$ & $\gamma _{0}^{N-2}$ & $%
\gamma _{0}^{N-1}$ \\
$\gamma _{1}^{0}$ & $\gamma _{1}^{1}$ & $...$ & $\gamma _{1}^{N-2}$ & $%
\gamma _{1}^{N-1}$ \\
$...$ & $...$ &  & $...$ & $...$ \\
$\gamma _{N-1}^{0}$ & $\gamma _{N-1}^{1}$ & $...$ & $\gamma _{N-1}^{N-2}$ & $%
\gamma _{N-1}^{N-1}$%
\end{tabular}%
\right] ,  \label{Gammamat}
\end{equation}%
with determinant $\det (\mathbf{\Gamma }_{N})=\prod\limits_{0\leq i<j\leq
N-1}(\gamma _{j}-\gamma _{i})$ (which is non-zero for distinct $\mathbf{%
\gamma }$), so that the sequence $\mathbf{g}=\left( \mathcal{T}_{\gamma
_{0},B_{N}}^{f_{n}}(1),\mathcal{T}_{\gamma _{1},B_{N}}^{f_{n}}(1),...,%
\mathcal{T}_{\gamma _{N-1},B_{N}}^{f_{n}}(1),...\right) ,$ can be written as%
\begin{equation}
\mathbf{g}_{N}=\mathbf{\Gamma }_{N}\mathbf{f}_{N},  \label{MatRepDTT}
\end{equation}%
where $\mathbf{g}_{N}$ and $\mathbf{f}_{N}$ are column vectors corresponding
to the first $N$ elements of the sequences $\mathbf{g}$ and $\mathbf{f}$,
respectively, and $N\geq 1.$
\end{enumerate}
\end{remark}

We present some mathematical applications of DTTs, in particular, to solving
difference equations. Other applications of the DTT are presented in Section %
\ref{Applications}.

\begin{example}[Difference Equations]
\label{AppDTTs}In what follows, let $f_{n}:$ $\mathbb{N}\rightarrow \mathbb{M%
},$ with $\mathbf{f}=(f_{0},f_{1},...),$ be any sequence.

\begin{enumerate}
\item First order difference equations: Consider a general first order
difference equation of the form%
\begin{equation}
f_{n}=a_{n}f_{n-1}+b_{n},  \label{FirstODE}
\end{equation}%
for all $n\geq 1,$ for a given starting value $f_{0},$ and known sequences $%
a_{n}$ and $b_{n},$ where we set $a_{0}=b_{0}=0.$ Then multiplying with $%
\frac{\gamma ^{n}}{t^{n}}$ both sides and summing over $n$ we obtain%
\begin{equation*}
\mathcal{T}_{\gamma }^{f_{n}}(t)=\frac{f_{0}}{t}+\frac{\gamma }{t}\mathcal{T}%
_{\gamma }^{\frac{a_{n+1}}{n!}f_{n}}(t)+\mathcal{T}_{\gamma }^{b_{n}}(t),
\end{equation*}%
$n\geq 1,$ which can be solved to provide the general solution of (\ref%
{FirstODE}), for given DTTs $\mathcal{T}_{\gamma }^{a_{n}}(t)$ and $\mathcal{%
T}_{\gamma }^{b_{n}}(t)$, provided the form of $\mathcal{T}_{\gamma }^{\frac{%
a_{n+1}}{n!}f_{n}}(t)$ is available.

\item Second order difference equations: Now turn to a general second order
difference equation of the form%
\begin{equation}
f_{n}=a_{n}f_{n-1}+b_{n}f_{n-2}+c_{n},  \label{SecondODE}
\end{equation}%
for all $n\geq 2,$ for given starting values $f_{0}$ and $f_{1},$ and known
sequences $a_{n},$ $b_{n}$ and $c_{n},$ where we set $%
a_{0}=b_{0}=c_{0}=a_{1}=b_{1}=c_{1}=0.$ Multiplying with $\frac{\gamma ^{n}}{%
t^{n}}$ both sides and summing over $n$ we obtain%
\begin{equation*}
\mathcal{T}_{\gamma }^{f_{n}}(t)=\frac{f_{0}}{t}+f_{1}\frac{\gamma }{t^{2}}+%
\frac{\gamma }{t}\mathcal{T}_{\gamma }^{\frac{a_{n+1}}{n!}f_{n}}(t)+\frac{%
\gamma ^{2}}{t^{2}}\mathcal{T}_{\gamma }^{\frac{b_{n+2}}{n!}f_{n}}(t)+%
\mathcal{T}_{\gamma }^{c_{n}}(t),
\end{equation*}%
$n\geq 2,$ and for given DTTs $\mathcal{T}_{\gamma }^{a_{n}}(t),$ $\mathcal{T%
}_{\gamma }^{b_{n}}(t),$ and $\mathcal{T}_{\gamma }^{c_{n}}(t)$, the latter
can be solved to provide the general solution of (\ref{SecondODE}), provided
the forms of $\mathcal{T}_{\gamma }^{\frac{a_{n+1}}{n!}f_{n}}(t)$ and $%
\mathcal{T}_{\gamma }^{\frac{b_{n+2}}{n!}f_{n}}(t)$ are available.

\item Fibonacci sequence: Take $f_{0}=0,$ $f_{1}=1,$ and $%
f_{n}=f_{n-1}+f_{n-2},$ $n\geq 2,$ i.e., $\mathbf{f}%
=(0,1,1,2,3,5,8,13,21,...),$ so that multiplying with $\frac{\gamma ^{n}}{%
t^{n}}$ and summing over $n$ we have that the Fibonacci DTT is given by%
\begin{equation*}
\mathcal{T}_{\gamma }^{f_{n}}(t)=\frac{\gamma }{t^{2}-\gamma t-\gamma ^{2}},
\end{equation*}%
for $\left\vert t\right\vert >\frac{1+\sqrt{5}}{2}\left\vert \gamma
\right\vert ,$ since the Fibonacci generating function has radius of
convergence $\frac{2}{1+\sqrt{5}}.$

\item Catalan numbers: Many combinatorial problems have as their solution
the Catalan numbers given by $C_{n}=\frac{1}{n+1}\frac{(2n)!}{n!n!},$ $n\in
\mathbb{N},$ which satisfy the Catalan recurrence equation $%
C_{n+1}=\sum\limits_{k=0}^{n}C_{k}C_{n-k}$. This is a convolution of the
sequence $C_{n}$ with itself, and it is straightforward to show that the
Catalan DTT is given by%
\begin{equation*}
\mathcal{T}_{\gamma }^{C_{n}}(t)=\frac{1-\sqrt{1-4\gamma /t}}{2\gamma },
\end{equation*}%
for $\left\vert t\right\vert >4\left\vert \gamma \right\vert .$
\end{enumerate}
\end{example}

\subsection{DTT Inversion and DFT as special case}

Assume that $B\in \mathcal{B}(\mathbb{N}),$ is nonempty with cardinality $%
N=\#(B)>0$, and consider two sequences $f_{n},\gamma _{n}:$ $\mathbb{N}%
\rightarrow \mathbb{M},$ with $\mathbf{f}=(f_{0},f_{1},...),$ and $\mathbf{%
\gamma }$ $=$ $(\gamma _{0},\gamma _{1},...),$ $\gamma _{n}\neq 0,$ $\forall
n\in B,$ and $f_{n}\neq 0,$ for at least one $n\in B.$ The DTT of $\mathbf{f}
$ driven by the sequence $\mathbf{\gamma },$ defines the sequence of
functions%
\begin{equation}
g_{k}(t)=\mathcal{T}_{\gamma _{k},B}^{f_{n}}(t)=\int\limits_{B}f_{n}\mathcal{%
T}_{\gamma _{k},t}(dn)=\frac{1}{t}\sum\limits_{n\in B}f_{n}\frac{\gamma
_{k}^{n}}{t^{n}},  \label{DTTfdrivengamma}
\end{equation}%
for all $k\in B$, $t\in \mathbb{T}_{B},$ where $g_{k}(t):\mathbb{T}\times
B\rightarrow \mathbb{M},$ with $\mathbf{g}(t)=$ $(g_{0}(t),g_{1}(t),...),$
is a sequence of measurable functions by construction.

The inverse DTT (IDTT) of the sequence $\mathbf{g}$ driven by a sequence $%
\mathbf{\xi }=$ $(\xi _{0},\xi _{1},...)$, is defined by the DTT%
\begin{equation}
\mathcal{IT}_{\xi _{n},B}^{g_{k}}(t):=\frac{1}{w_{n}(\mathbf{\gamma },N)}%
\int\limits_{B}g_{k}(t)\mathcal{T}_{\xi _{k},t}(dk)=\frac{1}{tw_{n}(\mathbf{%
\gamma },N)}\sum\limits_{k\in B}g_{k}(t)\frac{\xi _{n}^{k}}{t^{k}},
\label{IDTT1}
\end{equation}%
where $w_{n}(\mathbf{\gamma },N)\neq 0,$ a (weight) function of the
cardinality of $B$ and the sequence $\mathbf{\gamma }$, and $\mathbf{\xi }$
is a sequence that depends on $\mathbf{\gamma },$ and is such that at $t=1$,
we recover the original sequence, i.e.,
\begin{equation}
f_{n}=\mathcal{IT}_{\xi _{n},B}^{g_{k}}(1),  \label{IDTT2}
\end{equation}%
for all $n\in B$. The following provides conditions in order for the DTT $%
\mathcal{T}_{\gamma _{k},B}^{f_{n}}(t)$ of Equation (\ref{DTTfdrivengamma})
to be uniquely invertible and such that the IDTT\ $\mathcal{IT}_{\xi
_{n},B}^{g_{k}}(t)$ of Equation (\ref{InvDTT1}) to be itself a DTT. In other
words, we find conditions under which the inverse Vandermonde matrix, after
diagonal scaling, is itself a Vandermonde matrix.\textquotedblright

\begin{theorem}[DTT Inversion]
\label{DTTInversion}Given the non-zero and distinct driving sequence $%
\mathbf{\gamma }$, the DTT $g_{k}(t)=\mathcal{T}_{\gamma _{k},B}^{f_{n}}(t)$
of an arbitrary sequence $\mathbf{f}$, is uniquely invertible with $f_{n}$
given by (\ref{IDTT2}) if and only if the sequence $\mathbf{\xi }$ satisfies
the orthogonality conditions%
\begin{equation}
\sum\limits_{k\in B}\gamma _{k}^{l}\xi _{n}^{k}=w_{n}(\mathbf{\gamma }%
,N)\delta _{nl},  \label{IDTTCond}
\end{equation}%
where $\delta _{nl}$ denotes Kronecker's delta function, for all $n,l\in B$.
\end{theorem}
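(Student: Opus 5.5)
The plan is to work at $t=1$, where by the Matrix Representation in Remark \ref{PropsDTTs} the DTT driven by $\mathbf{\gamma}$ becomes a linear map. Write $B=\{b_1,\dots,b_N\}$. Then $\mathbf{g}_N=\mathbf{\Gamma}_N\mathbf{f}_N$, where $\mathbf{\Gamma}_N=[\gamma_k^{l}]_{k,l\in B}$ is a generalized Vandermonde matrix. When $B=B_N=\{0,\dots,N-1\}$ it is exactly (\ref{Gammamat}), and it is invertible because the $\gamma_k$ are distinct. For a general $B$ I will carry out the argument with the matrix indexed by $B$ and use its invertibility as the hypothesis that makes ``uniquely invertible'' meaningful. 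Likewise, the IDTT (\ref{IDTT1}) at $t=1$ reads $\mathcal{IT}_{\xi_n,B}^{g_k}(1)=w_n^{-1}\sum_{k\in B}g_k\,\xi_n^{k}$. In matrix form this is $\mathbf{W}^{-1}\mathbf{\Xi}\,\mathbf{g}_N$, with $\mathbf{\Xi}=[\xi_n^{k}]_{n,k\in B}$ and $\mathbf{W}=\mathrm{diag}(w_n(\mathbf{\gamma},N))$.

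Next I substitute the definition of $g_k(1)$ into the IDTT and interchange the two finite sums:
\begin{equation*}
\mathcal{IT}_{\xi_n,B}^{g_k}(1)=\frac{1}{w_n(\mathbf{\gamma},N)}\sum_{l\in B}f_l\Big(\sum_{k\in B}\gamma_k^{l}\xi_n^{k}\Big),
\end{equation*}
which in matrix form is $\mathbf{W}^{-1}\mathbf{\Xi}\mathbf{\Gamma}_N\mathbf{f}_N$. Since $B$ is finite, no convergence issues arise.

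For sufficiency ($\Leftarrow$), assume (\ref{IDTTCond}). The inner sum in the display collapses to $w_n\delta_{nl}$, so the right-hand side equals $f_n$ for every $n\in B$. Thus (\ref{IDTT2}) holds for every $\mathbf{f}$, and the IDTT is itself a DTT (driven by $\mathbf{\xi}$), as required. Uniqueness of the recovered $\mathbf{f}$ follows because $\mathbf{W}^{-1}\mathbf{\Xi}$ is then a left inverse of $\mathbf{\Gamma}_N$, so $\mathbf{\Gamma}_N$ is injective: $\mathbf{g}_N$ determines $\mathbf{f}_N$.

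For necessity ($\Rightarrow$), suppose (\ref{IDTT2}) holds for an arbitrary sequence $\mathbf{f}$. Take $\mathbf{f}=\mathbf{e}_l$, the indicator of $l\in B$. This is allowed because the setup only requires $f_n\neq0$ for some $n\in B$, and $\mathbf{e}_l$ satisfies that. The displayed identity then gives $w_n^{-1}\sum_k\gamma_k^{l}\xi_n^{k}=\delta_{nl}$ for all $n\in B$. Ranging over all $l\in B$ yields (\ref{IDTTCond}). Equivalently, $\mathbf{W}^{-1}\mathbf{\Xi}\mathbf{\Gamma}_N=\mathbf{I}_N$, so $\mathbf{\Xi}=\mathbf{W}\mathbf{\Gamma}_N^{-1}$, which also shows that $\mathbf{\xi}$ is determined by $\mathbf{\gamma}$ up to the diagonal weights.

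The main subtlety is interpretive rather than computational. First, ``arbitrary $\mathbf{f}$'' must be read as ``for all $\mathbf{f}$'', which is what licenses the basis-vector choice in the necessity direction. Second, the condition is a matrix identity that need not be realizable with $\mathbf{\Xi}$ of Vandermonde form. I will therefore remark that the theorem characterizes exactly when a Vandermonde-form $\mathbf{\Xi}$ exists. The DFT case, $\gamma_k=e^{-i2\pi k/N}$ with $\xi_n=\overline{\gamma}_n$ and $w_n=N$, satisfies (\ref{IDTTCond}) by the geometric-sum identity, and I will include it as a sanity check.
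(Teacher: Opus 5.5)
Your proposal is correct and follows the paper's proof essentially step for step. You evaluate the IDTT at $t=1$, interchange the finite sums to get $w_n(\mathbf{\gamma},N)^{-1}\sum_{l\in B}f_l\sum_{k\in B}\gamma_k^{l}\xi_n^{k}$, obtain sufficiency by direct substitution, and obtain necessity because the identity holds for every $\mathbf{f}$; your choice $\mathbf{f}=\mathbf{e}_l$ is just the explicit form of the paper's step that $(\mathbf{I}_N-\mathbf{W}^{-1}\mathbf{Z})\mathbf{v}=\mathbf{0}_N$ for arbitrary $\mathbf{v}$ forces $\mathbf{Z}=\mathbf{W}$.
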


\begin{proof}
\textbf{Form of }$\mathcal{IT}_{\xi _{n},B}^{g_{k}}(t)$\textbf{\ : }By
definition of $\mathcal{IT}_{\xi _{n},B}^{g_{k}}(t)$ and $g_{k}(t)$, we can
write%
\begin{eqnarray*}
\mathcal{IT}_{\xi _{n},B}^{g_{k}}(t) &=&\frac{1}{w_{n}(\mathbf{\gamma },N)}%
\int\limits_{B}g_{k}(t)\mathcal{T}_{\xi _{n},t}^{B}(dk)=\frac{1}{tw_{n}(%
\mathbf{\gamma },N)}\sum\limits_{k\in B}g_{k}(t)\frac{\xi _{n}^{k}}{t^{k}} \\
&=&\frac{1}{tw_{n}(\mathbf{\gamma },N)}\sum\limits_{k\in B}\left[ \frac{1}{t}%
\sum\limits_{l\in B}f_{l}\frac{\gamma _{k}^{l}}{t^{l}}\right] \frac{\xi
_{n}^{k}}{t^{k}} \\
&=&\frac{1}{t^{2}w_{n}(\mathbf{\gamma },N)}\sum\limits_{k\in
B}\sum\limits_{l\in B}f_{l}\frac{\gamma _{k}^{l}}{t^{l}}\frac{\xi _{n}^{k}}{%
t^{k}}=\frac{1}{t^{2}w_{n}(\mathbf{\gamma },N)}\sum\limits_{l\in B}f_{l}%
\frac{1}{t^{l}}\sum\limits_{k\in B}\gamma _{k}^{l}\frac{\xi _{n}^{k}}{t^{k}}.
\end{eqnarray*}%
For $t=1,$ we have%
\begin{equation}
\mathcal{IT}_{\xi _{n},B}^{g_{k}}(1)=\frac{1}{w_{n}(\mathbf{\gamma },N)}%
\sum\limits_{k\in B}\sum\limits_{l\in B}f_{l}\gamma _{k}^{l}\xi _{n}^{k}=%
\frac{1}{w_{n}(\mathbf{\gamma },N)}\sum\limits_{l\in
B}f_{l}\sum\limits_{k\in B}\gamma _{k}^{l}\xi _{n}^{k}.  \label{InvDTT1}
\end{equation}%
\newline
$(\Rightarrow )$ Assume that the driving sequence $\mathbf{\gamma }$ is
non-zero and distinct, and that (\ref{IDTT2}) holds, and note that the
matrix $\left[ \left( \xi _{n}^{k}\right) \right] $ is not Vandermonde for
general $B$. Let $\zeta _{nl}=\sum\limits_{k\in B}\gamma _{k}^{l}\xi
_{n}^{k},$ for all $n,l\in B$, so that (\ref{InvDTT1}) yields%
\begin{equation*}
f_{n}=\frac{1}{w_{n}(\mathbf{\gamma },N)}\sum\limits_{l\in
B}f_{l}\sum\limits_{k\in B}\gamma _{k}^{l}\xi _{n}^{k}=\frac{1}{w_{n}(%
\mathbf{\gamma },N)}\sum\limits_{l\in B}f_{l}\zeta _{nl},
\end{equation*}%
for all $n\in B$, and further define the column vector%
\begin{equation*}
\mathbf{v}=[f_{l}:l\in B]^{T},
\end{equation*}%
the matrix%
\begin{equation*}
\mathbf{Z}=[(\zeta _{nl}:n,l\in B)],
\end{equation*}%
and the diagonal matrix of weights $\mathbf{W}$, with diagonal elements $%
w_{l}(\mathbf{\gamma },N),$ $l\in B$. As a result, we can write the latter
display as%
\begin{equation*}
\mathbf{v}=\mathbf{W}^{-1}\mathbf{Zv\Rightarrow }\left( \mathbf{I}_{N}-%
\mathbf{W}^{-1}\mathbf{Z}\right) \mathbf{v}=\mathbf{0}_{N},
\end{equation*}%
for arbitrary $\mathbf{v}$, where $\mathbf{0}_{N}$ the $N\times 1$ zero
vector, $\mathbf{I}_{N}$ denotes the $N\times N$ identity matrix, and
therefore we must have $\mathbf{Z}=\mathbf{W}.$ Consequently, $\zeta
_{nl}=w_{n}(\mathbf{\gamma },N)\delta _{nl},$ for all $n,l\in B$, and the
orthogonality conditions (\ref{IDTTCond}) hold.\newline
$(\Leftarrow )$ Trivially, assuming that the orthogonality conditions (\ref%
{IDTTCond}) hold, we can write (\ref{InvDTT1}) as%
\begin{equation*}
\mathcal{IT}_{\xi _{n},B}^{g_{k}}(1)=\frac{1}{w_{n}(\mathbf{\gamma },N)}%
\sum\limits_{l\in B}f_{l}w_{n}(\mathbf{\gamma },N)\delta _{nl}=f_{n},
\end{equation*}%
so that the arbitrary $\mathbf{v}$ is recovered uniquely, as required.
\end{proof}

There exists at least one such sequence $\mathbf{\gamma }$ that satisfies
the latter theorem, i.e., the DFT and IDFT. To see this assume that $%
B_{N}=\{0,1,...,N-1\},$ and take the distinct $\gamma _{n}=e^{-i2\pi \frac{n%
}{N}},$ $n\in B_{N},$ and set $\xi _{n}=\overline{\gamma }_{n},$ such that
the conditions (\ref{IDTTCond}) reduce to%
\begin{equation*}
\sum\limits_{k\in B_{N}}\gamma _{k}^{l}\overline{\gamma }_{n}^{k}=\sum%
\limits_{k=0}^{N-1}e^{-i2\pi \frac{kl}{N}}e^{i2\pi \frac{nk}{N}%
}=\sum\limits_{k=0}^{N-1}e^{i2\pi \frac{k}{N}(l-n)}=N\delta _{nl},
\end{equation*}%
for all $n,l\in B_{N}$, and therefore, the IDFT\ is a special case of
Theorem \ref{DTTInversion}, with $w_{n}(\mathbf{\gamma },N)=N,$ for all $%
n\in B_{N}$.

When we work in a general field $\mathbb{F}$ (other than $\mathbb{C}),$ the
standard generalization of the DFT via the principal $N^{th}$ root of unity,
is also a special case of the DTT. Recall that the principal $N^{th}$ root
of unity over the field $\mathbb{F}$, is an element $r\in \mathbb{F}$ such
that%
\begin{equation*}
r^{N}=1,\text{ and }\sum\limits_{j=0}^{N-1}r^{jk}=0,
\end{equation*}%
for all $k=0,1,...,N-1.$ In this case, we have the driving sequence $\gamma
_{n}=r^{n},$ $n=0,1,...,N-1,$ and setting%
\begin{equation*}
\xi _{n}=\gamma _{n}^{-1}=r^{-n},
\end{equation*}%
we have that the conditions (\ref{IDTTCond}) reduce to%
\begin{equation*}
\sum\limits_{k=0}^{N-1}r^{kl}r^{-kn}=\sum\limits_{k=0}^{N-1}r^{k(l-n)}=N%
\delta _{nl},
\end{equation*}%
which\ is again, a special case of Theorem \ref{DTTInversion}, with $w_{n}(%
\mathbf{\gamma },N)=N,$ for all $n\in B_{N}$. The DFT is based on the
principal $N^{th}$ root of unity over $\mathbb{C}$ given by $r=e^{-i2\pi
\frac{1}{N}},$ with the roots being points on the unit circle of the complex
plane.

Since we would like to use a general sequence $\gamma _{k}$ in the DTT over $%
\mathbb{M}$, a natural question arises; is the DFT the only invertible DTT
using Theorem \ref{DTTInversion}? In other words, is there any driving
sequence $\mathbf{\gamma },$ other than $e^{-i2\pi \frac{1}{N}}$, for which
there exists a driving sequence $\mathbf{\xi }$ such that%
\begin{equation}
\sum\limits_{k=0}^{N-1}\gamma _{k}^{l}\xi _{n}^{k}=w_{n}(\mathbf{\gamma }%
,N)\delta _{nl},  \label{Conds1}
\end{equation}%
for all $n,l\in B_{N}$.

Let $\mathbf{\gamma }_{N}=[\gamma _{0},...,\gamma _{N-1}]^{T},$ $\mathbf{\xi
}_{N}=[\xi _{0},...,\xi _{N-1}]^{T},$ $\mathbf{\Xi }_{N}=\mathbb{V}(\xi
_{0},...,\xi _{N-1}),$ $\mathbf{\Gamma }_{N}^{-1}=[(\psi _{ij})],$ where%
\begin{equation}
\psi _{ij}=(-1)^{N-1-i}\frac{\sum\limits_{\substack{ 0\leq
j_{0}<...<j_{N-1-i}\leq N-1  \\ j_{0}<...<j_{N-1-i}\neq j}}\gamma
_{j_{0}}...\gamma _{j_{N-1-j}}}{\prod\limits_{m=0,m\neq j}^{N-1}(\gamma
_{j}-\gamma _{m})},  \label{Psiijs}
\end{equation}%
and $\mathbf{W}=diag(w_{0}(\mathbf{\gamma },N),$ $w_{1}(\mathbf{\gamma },N),$
$...,$ $w_{N-1}(\mathbf{\gamma },N)),$ the diagonal matrix of the weights.
In view of the DTT of Equation (\ref{MatRepDTT}), the Vandermonde matrix $%
\mathbf{\Gamma }_{N}=\mathbb{V}(\gamma _{0},...,\gamma _{N-1})$ plays a
pivotal role in the DTT and its inversion, since we can rewrite the
conditions (\ref{Conds1}) as the system of equations%
\begin{equation}
\mathbf{\Xi }_{N}\mathbf{\Gamma }_{N}=\mathbf{W}\Rightarrow \mathbf{\Xi }%
_{N}=\mathbf{W\Gamma }_{N}^{-1},  \label{CondsVandermonde}
\end{equation}%
so that $\mathbf{\xi }_{N}$ is the second column of $\mathbf{\Xi }_{N}$,
with the $q^{th}$ row of $\mathbf{\Xi }_{N}$ given by%
\begin{equation}
\left[ \xi _{q}^{0},\xi _{q}^{1},...,\xi _{q}^{N-2},\xi _{q}^{N-1}\right] =%
\left[ w_{q}(\mathbf{\gamma },N)\psi _{q,0},w_{q}(\mathbf{\gamma },N)\psi
_{q,1},...,w_{q}(\mathbf{\gamma },N)\psi _{q,N-1}\right] ,
\label{KsiSystemEqs}
\end{equation}%
and the first equation $1=w_{q}(\mathbf{\gamma },N)\psi _{q,0},$ giving
immediately the weight functions as%
\begin{equation}
w_{q}(\mathbf{\gamma },N)=\frac{1}{\psi _{q,0}},  \label{Weights1}
\end{equation}%
for all $q=0,1,2,...,N-1.$ Clearly, the system of equations (\ref%
{CondsVandermonde}) has a unique solution $\mathbf{\xi }_{N}$ provided that $%
\mathbf{\Xi }_{N}$ is a Vandermonde matrix based on this $\mathbf{\xi }_{N}$%
, and the latter depends on the form of the driving vector $\mathbf{\gamma }%
_{N}$. Then, uniqueness of $\mathbf{\xi }_{N}$\ follows from (\ref%
{CondsVandermonde}) and (\ref{KsiSystemEqs}) since $\mathbf{\Gamma }%
_{N}^{-1} $ exists and it is unique, even if ill-conditioned. However, the\
inverse of a square weighted Vandermonde matrix is not a Vandermonde matrix
in general, which, in this setup, is a requirement for the IDTT to be driven
by a specific vector $\mathbf{\xi }_{N}$, and therefore, existence of $%
\mathbf{\xi }_{N}$ needs to be further investigated. The following result
presents the general conditions required for $\mathbf{\xi }_{N}$ to exist
and be unique.

\begin{lemma}
\label{LemmaInv1}The system of equations (\ref{CondsVandermonde}) has a
unique solution $\mathbf{\xi }_{N}=[\xi _{0},...,\xi _{N-1}]^{T}$ if and
only if the weighted inverse Vandermonde matrix $\mathbf{W\Gamma }_{N}^{-1}$
is the Vandermonde matrix $\mathbf{\Xi }_{N}=\mathbb{V}(\xi _{0},...,\xi
_{N-1}),$ with $\mathbf{\xi }_{N}$ defined by%
\begin{equation}
\xi _{n}=\frac{\psi _{n,1}}{\psi _{n,0}},  \label{CondDTTMI1}
\end{equation}%
where $\psi _{n,0}$ and $\psi _{n,1}$ are given by Equation (\ref{Psiijs}),
the weights satisfy Equation (\ref{Weights1}), and the driving vector $%
\mathbf{\gamma }_{N}$ $=$ $[\gamma _{0},\gamma _{1},...,$ $\gamma
_{N-1}]^{T} $ is such that it satisfies%
\begin{equation}
\psi _{n,k}=\psi _{n,1}^{k}\psi _{n,0}^{1-k},  \label{CondDTTMI2}
\end{equation}%
for all $k=1,2,...,N-1,$ and $n=0,1,...,N-1,$ with $\mathbf{\Gamma }%
_{N}^{-1}=[(\psi _{ij})].$\newline
In particular, every ratio of two nodes is an $N$-th root of unity and the
nodes vector $\mathbf{\gamma }_{N}$ must satisfy%
\begin{equation}
\left( \frac{\gamma _{m}}{\gamma _{n}}\right) ^{N}=1.  \label{condgammas}
\end{equation}%
for all $m,n=0,1,2,...,N-1.$
\end{lemma}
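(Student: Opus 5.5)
The argument has two parts. First, compare $\mathbf{W\Gamma}_N^{-1}$ and $\mathbf{\Xi}_N$ row by row, which gives the equivalence with (\ref{CondDTTMI1})--(\ref{CondDTTMI2}). Second, use the orthogonality relations (\ref{Conds1}) to derive (\ref{condgammas}).

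\textbf{Equivalence.} Since $\mathbf{\gamma}_N$ has distinct entries, $\det(\mathbf{\Gamma}_N)\neq 0$. So $\mathbf{\Gamma}_N^{-1}=[(\psi_{ij})]$ exists, is unique, and is given by (\ref{Psiijs}). Then (\ref{CondsVandermonde}) is equivalent to $\mathbf{\Xi}_N=\mathbf{W\Gamma}_N^{-1}$, which by (\ref{KsiSystemEqs}) means $\xi_q^k=w_q(\mathbf{\gamma},N)\psi_{q,k}$ for all $q,k\in B_N$. I will read off the entries of row $q$ in order of $k$:
\begin{itemize}
\item $k=0$ gives $w_q=1/\psi_{q,0}$, which is (\ref{Weights1}). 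This forces $\psi_{q,0}\neq 0$.
\item $k=1$ gives $\xi_q=\psi_{q,1}/\psi_{q,0}$, which is (\ref{CondDTTMI1}). So $\mathbf{\xi}_N$ is determined, and this gives uniqueness.
\item $k\ge 2$ gives $\psi_{q,k}/\psi_{q,0}=(\psi_{q,1}/\psi_{q,0})^k$, which is (\ref{CondDTTMI2}).
\end{itemize}
For the converse, assume (\ref{CondDTTMI2}). Define $\xi_q$ and $w_q$ by (\ref{CondDTTMI1}) and (\ref{Weights1}). Then every row of $\mathbf{W\Gamma}_N^{-1}$ is $(1,\xi_q,\dots,\xi_q^{N-1})$, so $\mathbf{W\Gamma}_N^{-1}=\mathbb{V}(\xi_0,\dots,\xi_{N-1})$ and (\ref{CondsVandermonde}) holds. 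This direction is routine bookkeeping.

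\textbf{Roots of unity.} Fix $q$ and consider the polynomial
\[
p_q(x)=\sum_{k=0}^{N-1}(\xi_q x)^k .
\]
By (\ref{Conds1}), $p_q(\gamma_l)=w_q\delta_{ql}$, and $w_q\neq 0$. Take $l\neq q$. If $\xi_q\gamma_l=1$, then $p_q(\gamma_l)=N\neq 0$, a contradiction. Hence $\xi_q\gamma_l\neq 1$, and the geometric-sum formula turns $p_q(\gamma_l)=0$ into $(\xi_q\gamma_l)^N=1$, that is, $\gamma_l^N=\xi_q^{-N}$ for every $l\neq q$. Note also that $\xi_q\neq 0$, since otherwise $p_q\equiv 1$.

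Therefore all $\gamma_l$ with $l\neq q$ share the same $N$-th power. Now take any $m\neq n$. If $N\ge 3$, choose $q\notin\{m,n\}$; the previous conclusion gives $\gamma_m^N=\gamma_n^N$, which is (\ref{condgammas}). Since $\gamma$ is nonzero, these ratios are $N$-th roots of unity.

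\textbf{Main obstacle.} The hardest step is this last one, and in particular the small cases. For $N=2$ one computes
\[
\mathbf{\Gamma}_2^{-1}\propto\begin{pmatrix}\gamma_1&-1\\-\gamma_0&1\end{pmatrix},
\]
so every row is automatically of Vandermonde form, with $\xi_0=-1/\gamma_1$ and $\xi_1=-1/\gamma_0$. Condition (\ref{condgammas}) does not follow in that case. I would therefore state and prove the root-of-unity consequence for $N\ge 3$, remark that $N\le 2$ is unrestricted, and check that this is consistent with the DFT example given after Theorem \ref{DTTInversion}.
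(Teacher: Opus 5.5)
Your first part (reading row $q$ of $\mathbf{W\Gamma}_N^{-1}$ entry by entry, plus the converse) matches the paper and is fine. The gap is in the roots-of-unity step, at ``By (\ref{Conds1}), $p_q(\gamma_l)=w_q\delta_{ql}$.''

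Condition (\ref{Conds1}) is $\sum_k\xi_q^{k}\gamma_k^{\,l}=w_q\delta_{ql}$: the summation index labels the node and $l$ is the exponent. In $p_q(\gamma_l)=\sum_k\xi_q^{k}\gamma_l^{\,k}$ these roles are swapped. In matrix form you are solving $\mathbf{\Xi}_N\mathbf{\Gamma}_N^{T}=\mathbf{W}$, which says the columns of $\mathbf{\Gamma}_N^{-1}$ (the Lagrange coefficient vectors) are geometric. But (\ref{CondsVandermonde}) is $\mathbf{\Xi}_N\mathbf{\Gamma}_N=\mathbf{W}$, which says the rows are geometric.

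These conditions really differ. For $N=4$ and $\mathbf{\gamma}_4=(1,-1,i,-i)$, your system holds with $\xi_q=1/\gamma_q$. Yet (\ref{Conds1}) with $n=1$ has no solution: the equations for $l=0$ and $l=2$ force $\xi_1=-1$, and then the $l=3$ sum equals $2-2i\neq0$.

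Your $N=2$ matrix has the same problem: it is the transpose of $\mathbf{\Gamma}_2^{-1}=(\gamma_1-\gamma_0)^{-1}\begin{pmatrix}\gamma_1&-\gamma_0\\-1&1\end{pmatrix}$, and the true solution is $\xi_0=-\gamma_0/\gamma_1$, $\xi_1=-1$. Your conclusion there still stands, though. Taking $\mathbf{\gamma}_2=(1,2)$ gives $\mathbf{\Xi}_2\mathbf{\Gamma}_2=\mathrm{diag}(1/2,-1)$ while $(\gamma_1/\gamma_0)^2=4$, so (\ref{condgammas}) genuinely fails for $N=2$.

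The paper's proof makes the same transposition, so you cannot patch this from it. It identifies the $n$-th row of $\mathbf{\Gamma}_N^{-1}$ with the coefficients of the Lagrange polynomial $L_n$, which in fact form the $n$-th column.

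The missing step is to show first that the nodes form a geometric progression.
\begin{itemize}
\item Since $(\mathbf{\Gamma}_N^{-1})_{n,k}$ is the coefficient of $x^n$ in $L_k$, (\ref{KsiSystemEqs}) with $n=0$ and $n=N-1$ gives $\xi_0^k=w_0L_k(0)$ and $\xi_{N-1}^k=w_{N-1}/\prod_{m\neq k}(\gamma_k-\gamma_m)$.
\item Dividing yields $(\xi_0/\xi_{N-1})^k=(-1)^{N-1}(w_0/w_{N-1})\,\gamma_k^{-1}\prod_{m}\gamma_m$. Hence $\gamma_k=\gamma_0\rho^k$ with $\rho=\xi_{N-1}/\xi_0$; both $\xi$'s are nonzero because the nodes are nonzero and distinct.
\item Then $\mathbf{\Gamma}_N=\mathbf{V}\,\mathrm{diag}(\gamma_0^{0},\dots,\gamma_0^{N-1})$ with $\mathbf{V}=[(\rho^{kl})]$ symmetric. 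So (\ref{Conds1}) becomes exactly your identity at the nodes $\rho^l$, namely $p_n(\rho^l)=w_n\gamma_0^{-n}\delta_{nl}$.
\item From there your argument goes through verbatim. For $N\geq3$, $(\xi_0\rho)^N=(\xi_0\rho^2)^N=1$ gives $\rho^N=1$, hence $(\gamma_m/\gamma_n)^N=\rho^{(m-n)N}=1$.
\end{itemize}

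With this repair your ending is better than the paper's. The paper gets $\xi_n\gamma_n=1$ by calling it ``the missing root of unity'', without ever showing that $\xi_n\gamma_n$ is an $N$-th root of unity. That is precisely what fails at $N=2$. Your third-index argument and your explicit $N=2$ exception handle this correctly.
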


\begin{proof}
$(\Rightarrow )$ Assume that the system (\ref{CondsVandermonde}) has a
unique solution $\mathbf{\xi }_{N}.$ In order for $\mathbf{\Xi }_{N}$ in (%
\ref{CondsVandermonde}) to be the Vandermonde matrix $\mathbf{\Xi }_{N}=%
\mathbb{V}(\xi _{0},...,\xi _{N-1}),$ we must have that equations (\ref%
{KsiSystemEqs}) hold, so that $\xi _{q}$ satisfies%
\begin{eqnarray*}
1 &=&w_{q}(\mathbf{\gamma },N)\psi _{q,0}\Longrightarrow w_{q}(\mathbf{%
\gamma },N)=\frac{1}{\psi _{q,0}}, \\
\xi _{q}^{1} &=&w_{q}(\mathbf{\gamma },N)\psi _{q,1}\Longrightarrow \xi _{q}=%
\frac{\psi _{q,1}}{\psi _{q,0}}, \\
&&... \\
\xi _{q}^{N-1} &=&w_{q}(\mathbf{\gamma },N)\psi _{q,N-1}\Longrightarrow \xi
_{q}=\left( \frac{\psi _{q,N-1}}{\psi _{q,0}}\right) ^{\frac{1}{N-1}},
\end{eqnarray*}%
for all $q=0,1,2,...,N-1.$ As a result, we have%
\begin{equation*}
\xi _{q}=\frac{\psi _{q,1}}{\psi _{q,0}}=\left( \frac{\psi _{q,2}}{\psi
_{q,0}}\right) ^{\frac{1}{2}}=...=\left( \frac{\psi _{q,N-1}}{\psi _{q,0}}%
\right) ^{\frac{1}{N-1}},
\end{equation*}%
with $w_{q}(\mathbf{\gamma },N)=1/\psi _{q,0},$ so that solving for $\psi
_{q,j}$ leads to%
\begin{equation*}
\psi _{q,j}=\frac{\psi _{q,1}^{k}}{\psi _{q,0}^{k-1}}=\psi _{q,1}^{k}\psi
_{q,0}^{1-k},
\end{equation*}%
for all $k=1,2,...,N-1,$ and $q=0,1,2,...,N-1,$ and Equations (\ref{Weights1}%
), (\ref{CondDTTMI1}) and (\ref{CondDTTMI2}) are satisfied, as entertained.%
\newline
\textbf{The form of} $\mathbf{\gamma }_{N}:$ Recall that the inverse
Vandermonde matrix can be described naturally using the Lagrange
interpolation polynomials, i.e., for $n=0,\ldots ,N-1$, define
\begin{equation*}
L_{n}(x)=\prod_{\substack{ m=0 \\ m\neq n}}^{N-1}\frac{x-\gamma _{m}}{\gamma
_{n}-\gamma _{m}},
\end{equation*}%
such that the polynomials satisfy
\begin{equation*}
L_{n}(\gamma _{j})=\delta _{nj}.
\end{equation*}%
Expanding $L_{n}(x)$ in terms of powers of $x$ as
\begin{equation*}
L_{n}(x)=\sum_{k=0}^{N-1}\psi _{n,k}x^{k},
\end{equation*}%
the $n$-th row of $\mathbf{\Gamma }_{N}^{-1}=[(\psi _{ij})]$ is precisely $%
[\psi _{n,0},\psi _{n,1},\ldots ,\psi _{n,N-1}].$ Using the standard
approach in the literature, let $e_{r}$ denote the $r$-th elementary
symmetric polynomial (see \cite{ballantine2025partitions}). Then the
coefficients of $L_{n}(x)$ are given by
\begin{equation*}
\psi _{n,k}=\frac{(-1)^{N-1-k}e_{N-1-k}\left( \{\gamma _{m}:m\neq n\}\right)
}{\prod_{m\neq n}(\gamma _{n}-\gamma _{m})}.
\end{equation*}%
In particular, the constant coefficient is
\begin{equation*}
\psi _{n,0}=\frac{\prod_{m\neq n}\gamma _{m}}{\prod_{m\neq n}(\gamma
_{n}-\gamma _{m})},
\end{equation*}%
while the coefficient of $x$ is
\begin{equation*}
\psi _{n,1}=-\frac{\sum_{m\neq n}\prod_{\substack{ \ell \neq n,m}}\gamma
_{\ell }}{\prod_{m\neq n}(\gamma _{n}-\gamma _{m})}.
\end{equation*}%
Finally, because none of the $\gamma _{m}$ is zero, we have $\psi _{n,0}\neq
0$, so that using equation (\ref{CondDTTMI1}) we obtain
\begin{equation*}
\xi _{n}=-\frac{\sum_{m\neq n}\prod_{\substack{ \ell \neq n,m}}\gamma _{\ell
}}{\prod_{m\neq n}\gamma _{m}},
\end{equation*}%
and dividing each term in the numerator by $\prod_{m\neq n}\gamma _{m}$
gives the general form of the $\xi $s as a function of the $\gamma $s as%
\begin{equation}
\xi _{n}=-\sum_{\substack{ m=0 \\ m\neq n}}^{N-1}\frac{1}{\gamma _{m}},
\label{ksi_n}
\end{equation}%
for all $n=0,\ldots ,N-1$. Thus, the ratio of the second entry to the first
entry in the $n$-th row of $\mathbf{\Gamma }_{N}^{-1}$ is simply the
negative sum of the reciprocals of all the Vandermonde nodes other than $%
\gamma _{n}$.\newline
Consider first $m\neq n$, and using Equation (\ref{CondDTTMI1}), we can
rewrite the assumed relation as
\begin{equation*}
\psi _{n,k}=\psi _{n,0}\xi _{n}^{k},
\end{equation*}%
for all $k=0,1,2,...,N-1,$ and $n=0,1,...,N-1,$ and therefore%
\begin{equation*}
L_{n}(x)=\sum_{k=0}^{N-1}\psi _{n,k}x^{k}=\sum_{k=0}^{N-1}\psi _{n,0}\xi
_{n}^{k}x^{k}=\psi _{n,0}\sum_{k=0}^{N-1}\xi _{n}^{k}x^{k}=\psi
_{n,0}\sum_{k=0}^{N-1}(\xi _{n}x)^{k}.
\end{equation*}%
Since
\begin{equation*}
L_{n}(\gamma _{m})=0,
\end{equation*}%
we obtain
\begin{equation*}
0=\psi _{n,0}\sum_{k=0}^{N-1}(\xi _{n}\gamma _{m})^{k},
\end{equation*}%
and since $\psi _{n,0}\neq 0$, it follows that
\begin{equation*}
\sum_{k=0}^{N-1}(\xi _{n}\gamma _{m})^{k}=0.
\end{equation*}%
Consequently, we can write%
\begin{equation*}
\sum_{k=0}^{N-1}(\xi _{n}\gamma _{m})^{k}=\frac{(\xi _{n}\gamma _{m})^{N}-1}{%
(\xi _{n}\gamma _{m})-1}=0\Rightarrow (\xi _{n}\gamma _{m})^{N}=1,
\end{equation*}%
for all $m\neq n,$ and
\begin{equation*}
\xi _{n}\gamma _{m}\neq 1.
\end{equation*}%
Thus, for fixed $n$, the $N-1$ numbers
\begin{equation*}
\left\{ \xi _{n}\gamma _{m}:m\neq n\right\}
\end{equation*}%
are $N-1$ distinct nontrivial $N$-th roots of unity. There is exactly one $N$%
-th root of unity missing from this set. Because the nodes $\gamma _{m}$ are
distinct, the remaining number
\begin{equation*}
\xi _{n}\gamma _{n}
\end{equation*}%
cannot coincide with any of the other $N-1$ values. Hence it must be the
missing root of unity, namely $1$. Therefore
\begin{equation*}
\xi _{n}\gamma _{n}=1,
\end{equation*}%
and it follows that
\begin{equation*}
\xi _{n}=\frac{1}{\gamma _{n}}.
\end{equation*}%
Moreover, for every $m\neq n$,
\begin{equation*}
\left( \frac{\gamma _{m}}{\gamma _{n}}\right) ^{N}=(\xi _{n}\gamma
_{m})^{N}=1,
\end{equation*}%
and hence (\ref{condgammas}) is established.\newline
$(\Leftarrow )$ Now assume that Equations (\ref{Weights1}), (\ref{CondDTTMI1}%
) and (\ref{CondDTTMI2}) hold. Then trivially, since
\begin{equation*}
\left( \frac{\psi _{n,k}}{\psi _{n,0}}\right) ^{\frac{1}{k}}=\left( \frac{%
\psi _{n,1}^{k}\psi _{n,0}^{1-k}}{\psi _{n,0}}\right) ^{\frac{1}{k}}=\frac{%
\psi _{n,1}}{\psi _{n,0}}=\xi _{n},
\end{equation*}%
for all $k=1,2,...,N-1,$ and $n=0,1,2,...,N-1,$ it follows that $\mathbf{\Xi
}_{N}=\mathbb{V}(\xi _{0},...,\xi _{N-1})=\mathbf{W\Gamma }_{N}^{-1}$ and (%
\ref{CondsVandermonde}) holds$.$ Furthermore, the $\xi $s are obtained as a
function of the $\gamma $s using Equation (\ref{ksi_n}).
\end{proof}

Note that the driving vector $\mathbf{\gamma }_{N}$ $=$ $[\gamma _{0},\gamma
_{1},...,$ $\gamma _{N-1}]^{T}$ is obtained by conditions (\ref{CondDTTMI2})
which give $N$-equations with $N$-unknowns. Unfortunately, the root-of-unity
condition is highly restrictive. To obtain an inversion method applicable to
more general driving vectors $\mathbf{\gamma }_{N}$, we now introduce a
matrix-based inversion procedure.

\subsection{DTT Matrix Inversion}

In what follows we investigate the inversion of the DTT specifically for
sets of the form $B_{N}$ and we begin with a general DTT matrix inversion in
the following definition, that will allow us to apply the DTT and invert it
for general $\mathbf{\gamma }_{N}.$

\begin{definition}[DTT Matrix Inversion]
\label{DTTMI}Assume that $B=\{0,1,...,N-1\}$ $\in $ $\mathcal{B}(\mathbb{N}%
), $ $N>1$, and consider two vectors $\mathbf{f}%
_{N}=[f_{0},f_{1},...,f_{N-1}]^{T},$ and $\mathbf{\gamma }_{N}$ $=$ $[\gamma
_{0},\gamma _{1},...,\gamma _{N-1}]^{T},$ $\gamma _{n}\neq 0,$ $\forall n\in
B,$ $\gamma _{n}$ distinct, and $\mathbf{f}_{N}\neq \mathbf{0}.$ The DTT at $%
t=1$, of $\mathbf{f}_{N}$ driven by $\mathbf{\gamma }_{N}$ is given by the
vector $\mathbf{g}_{N}=$ $[g_{0},g_{1},...,g_{N-1}]^{T}$ with%
\begin{equation*}
\mathbf{g}_{N}=\mathbf{\Gamma }_{N}\mathbf{f}_{N},
\end{equation*}%
where $\mathbf{\Gamma }_{N}=\mathbb{V}(\gamma _{0},...,\gamma _{N-1})$ the
Vandermonde matrix of Equation (\ref{Gammamat}), with%
\begin{equation*}
g_{k}=\sum\limits_{n=0}^{N-1}f_{n}\gamma _{k}^{n},
\end{equation*}%
$k=0,1,...,N-1.$ Then the DTT matrix inverse (DTTMI) of $\mathbf{g}_{N}$
driven by the matrix $\mathbf{\Psi }=[(\psi _{ij})]$ is defined by%
\begin{equation*}
\mathbf{f}_{N}^{-}:=\mathbf{\Psi g}_{N},
\end{equation*}%
where $\mathbf{f}_{N}^{-}=[f_{0}^{-},f_{1}^{-},...,f_{N-1}^{-}]^{T},$ with%
\begin{equation*}
f_{n}^{-}:=\sum\limits_{k=0}^{N-1}g_{k}\psi _{nk},
\end{equation*}%
and it recovers the original vector perfectly provided that $\mathbf{\Psi }=%
\mathbf{\Gamma }_{N}^{-1}$, i.e.,%
\begin{equation*}
\mathbf{f}_{N}^{-}=\mathbf{\Psi g}_{N}=\mathbf{\Gamma }_{N}^{-1}\mathbf{%
\Gamma }_{N}\mathbf{f}_{N}=\mathbf{f}_{N},
\end{equation*}%
for all $n=0,1,...,N-1.$
\end{definition}

The DTTMI is straightforward and can be utilized to invert the DTT based on
general driving vectors $\mathbf{\gamma }_{N}$ that satisfy mild conditions,
such as the being non-zero and distinct, however it does not necessarily
correspond to an IDTT. Moreover, the ill--conditioning of the Vandermonde
matrix is responsible for underflows and overflows that can cause
irreparable exceptions in machine arithmetic when we try to obtain the
inverse. Therefore, out of necessity, we will offer shortly an alternative
way of calculating $\mathbf{\xi }_{N}$ or $\mathbf{\Gamma }_{N}^{-1}$ via
Monte Carlo. Finally note that the DTT inversions of Theorem \ref%
{DTTInversion} and Definition \ref{DTTMI} do not require the driving
sequence $\mathbf{\gamma }$\ to be orthogonal (as in the DFT case), but they
work for any sequence under mild conditions.

\begin{example}[DTTMI for small $N$]
\label{IDTTex}We will consider a specific sequence $\mathbf{f}=(f_{0},$ $%
f_{1},$ $...),$ and driving sequences $\mathbf{\gamma }$ $=$ $(\gamma
_{0},\gamma _{1},...)$, with distinct $\gamma _{n}\neq 0,$ $\forall n\in
B_{N},$ where $B_{N}=$ $\{$ $0,1,$ $...,N-1\},$ with $N<+\infty ,$ so we do
not have to worry about convergence issues with the DTT and DTTMI.\newline
Take $N=4$ with $\mathbf{f}=(1+0i,2-1i,0-1i,-1+2i,...),$ or $\mathbf{f}%
_{N}=[1+0i,2-1i,0-1i,-1+2i]^{T},$ and let $\gamma _{k}=\frac{1}{k+1},$ $k\in
B_{4}=\{0,1,2,3\}$, be the driving vector. Then the DTT vector is given by $%
\mathbf{g}_{N}=[2+0i,$ $1.875-0.5i,$ $1.62963-0.37037i,$ $%
1.484375-0.28125i]^{T},$ and the DTTMI recovers $\mathbf{f}$ exactly, based
on $\mathbf{\Psi }=\mathbb{V}^{-1}(1,0.5,0.3333,0.25).$\newline
Now consider as driving vector the shifted Catalan numbers $\gamma _{k}=%
\frac{(2(k+1))!}{(k+1)!(k+1)!},$ $k\in B_{4},$ so that the DTT becomes $%
\mathbf{g}_{N}=[2+0i,$ $-3+10i,$ $-114+220i,$ $-2715+5278i]^{T}.$ The DTTMI,
once again, recovers the original sequence perfectly, based on $\mathbf{\Psi
}=\mathbb{V}^{-1}(1,2,5,14).$
\end{example}

For excellent reviews on classic and the latest methods of finding the
inverse of the Vandermonde matrix see \cite{hosseini2019closed} and \cite%
{beinert2023phase}. Unfortunately, even for moderate or large $N$, e.g., $%
N>50$, these classic and recent methods are not able to recover the
Vandermonde matrix inverse for any distinct and non-zero $\mathbf{\gamma }%
_{N}$\ (e.g., machine overflows), so instead we propose and utilize a Monte
Carlo simulation approach, which we collect next.

\begin{remark}[Vandermonde matrix inversion]
\label{VandermondeInversion}We briefly discuss the background and steps
required to accomplish the inversion.

\begin{enumerate}
\item Multivariate Gaussian distribution: Recall the multivariate Gaussian
distribution (see \cite{muirhead2009aspects}), which is used to describe
random vectors. More precisely, the Gaussian distribution of a $p$%
-dimensional random vector $\mathbf{X}$ is denoted by $\mathcal{N}_{p}(%
\mathbf{\mu },\mathbf{\Sigma }),$ with density%
\begin{equation*}
f(\mathbf{x})=\left\vert 2\pi \mathbf{\Sigma }\right\vert ^{-\frac{1}{2}%
}\exp \left\{ -\frac{1}{2}(\mathbf{x}-\mathbf{\mu })^{T}\mathbf{\Sigma }%
^{-1}(\mathbf{x}-\mathbf{\mu })\right\} ,
\end{equation*}%
$\mathbf{x}\in \Re ^{p}$, where $\mathbf{\mu }$ denotes the mean vector and $%
\mathbf{\Sigma }$ is a symmetric, positive semi-definite variance-covariance
matrix, i.e., $\mathbb{E}(\mathbf{X})=\mathbf{\mu }$, and $V(\mathbf{X})=%
\mathbb{E}(\mathbf{X}-\mathbf{\mu })(\mathbf{X}-\mathbf{\mu })^{T}=\mathbf{%
\Sigma }$, where $\mathbb{E}$ denotes expectation.

\item Monte Carlo simulation: The Strong Law of Large Numbers (SLLN, see
\cite{Billingsley2013}) allows us to approximate the means of functions of
random objects (variables, vectors, matrices, sets etc). For Gaussian random
vectors in particular, let $\mathbf{X}_{1},\mathbf{X}_{2},...,\mathbf{X}%
_{L}, $ denote a large random sample $($e.g., $L\geq 100000)$ from the
Gaussian distribution $\mathcal{N}_{p}(\mathbf{\mu },\mathbf{\Sigma })$.
Then the sample mean satisfies%
\begin{equation}
\overline{\mathbf{X}}:=\frac{1}{L}\sum\limits_{l=1}^{L}\mathbf{X}_{l}\overset%
{a.s.}{\rightarrow }E(\mathbf{X})=\mathbf{\mu },  \label{Normmean}
\end{equation}%
and sample variance-covariance matrix satisfies%
\begin{equation}
\mathbf{S}:=\frac{1}{L-1}\sum\limits_{l=1}^{L}\left( \mathbf{X}_{l}-%
\overline{\mathbf{X}}\right) \left( \mathbf{X}_{l}-\overline{\mathbf{X}}%
\right) ^{T}\overset{a.s.}{\rightarrow }V(\mathbf{X})=\mathbf{\Sigma },
\label{SampleVar}
\end{equation}%
such that%
\begin{equation}
\mathbf{S}^{-1}\overset{a.s.}{\rightarrow }\mathbf{\Sigma }^{-1},
\label{InvSampleVar}
\end{equation}%
and the larger $L$ becomes the better the approximation. Note that even if $%
\mathbf{\Sigma }$ is singular, we can still generate random vectors from the
Gaussian distribution, such that (\ref{InvSampleVar}) still holds with
matrix inverse replaced by generalized inverse, say $\mathbf{S}^{+}$ and $%
\mathbf{\Sigma }^{+}$, the Moore-Penrose generalized inverse (MPGI) of $%
\mathbf{S}$ and $\mathbf{\Sigma },$ respectively$.$ When $rank(\mathbf{%
\Sigma })=r<p,$ we typically use the unbiased estimator of $\mathbf{\Sigma }%
^{+}$ given by $\frac{p-r-2}{p}\mathbf{S}^{+}.$

\item Vandermonde inverse approximation: Now take $\mathbf{\mu }=\mathbf{0},$
and $\mathbf{\Sigma }=\mathbf{\Gamma }_{N}\mathbf{\Gamma }_{N}^{T}>0,$ such
that $\mathbf{\Sigma }$ is a symmetric and positive definite matrix,
however, in machine arithmetic, $\mathbf{\Sigma }$ may be reported in the
computer as singular as $N$ increases, owing to the ill-conditioning of the
Vandermonde matrix $\mathbf{\Gamma }_{N}$. Using Equation (\ref{InvSampleVar}%
) we can write%
\begin{equation*}
\mathbf{S}^{-1}\overset{a.s.}{\rightarrow }\left( \mathbf{\Gamma }_{N}%
\mathbf{\Gamma }_{N}^{T}\right) ^{-1}=\left( \mathbf{\Gamma }_{N}^{T}\right)
^{-1}\mathbf{\Gamma }_{N}^{-1},
\end{equation*}%
and therefore upon multiplying from the right with $\mathbf{\Gamma }_{N}^{T}$%
, we obtain an approximation of the Vandermonde matrix inverse via%
\begin{equation}
\mathbf{\Gamma }_{N}^{T}\mathbf{S}^{-1}\overset{a.s.}{\rightarrow }\mathbf{%
\Gamma }_{N}^{-1},  \label{VandermondeMC}
\end{equation}%
which requires a single matrix inversion of a well behaved matrix $\mathbf{S}
$. For justification on the validity of the operations above involving
almost sure convergence, see \cite{Ferguson}.
\end{enumerate}
\end{remark}

Whereas the matrix $\mathbf{\Gamma }_{N}$ is a square Vandermonde matrix by
construction of the IDTT or DTTMI, there are applications where we require
inversion of a general $n\times p$ $(p\leq n)$ Vandermonde matrix $\mathbf{X}
$ based on the vector $\mathbf{x}=[x_{1},...,x_{n}]^{T}$. Assuming that the $%
x$s are distinct, the matrix $\mathbf{X}$ is of full rank, i.e., the rank is
equal to $\min (n,p)$. When $\min (n,p)=p,$ we have that $\mathbf{X}$\ has
full column rank and $\left( \mathbf{X}^{T}\mathbf{X}\right) ^{-1}$ exists,
whereas, if $\min (n,p)=n,$ we have that $\mathbf{X}$\ has full row rank and
$\left( \mathbf{XX}^{T}\right) ^{-1}$ exists. The construction above is
still viable as follows; in the first case, set $\mathbf{\Sigma }=\mathbf{X}%
^{T}\mathbf{X}>0,$ a positive definite, $p\times p$ symmetric matrix, with (%
\ref{InvSampleVar}) leading to $\mathbf{S}^{-1}\overset{a.s.}{\rightarrow }%
\left( \mathbf{X}^{T}\mathbf{X}\right) ^{-1},$ so that a left inverse of a
non-square matrix $\mathbf{X}$ is given by the $p\times n$ matrix%
\begin{equation*}
\mathbf{S}^{-1}\mathbf{X}^{T}\overset{a.s.}{\rightarrow }\mathbf{X}%
_{L}^{-1}=\left( \mathbf{X}^{T}\mathbf{X}\right) ^{-1}\mathbf{X}^{T},
\end{equation*}%
with $\mathbf{X}_{L}^{-1}\mathbf{X}=\mathbf{I}_{p},$ since $\mathbf{X}$ has
full column rank. Similarly, when $\mathbf{X}$ has full row rank, we take $%
\mathbf{\Sigma }=\mathbf{XX}^{T}>0,$ a positive definite, $n\times n$
symmetric matrix, with $\mathbf{S}^{-1}\overset{a.s.}{\rightarrow }\left(
\mathbf{XX}^{T}\right) ^{-1},$ so that a right inverse of $\mathbf{X}$ is
given by the $p\times n$ matrix%
\begin{equation*}
\mathbf{X}^{T}\mathbf{S}^{-1}\overset{a.s.}{\rightarrow }\mathbf{X}_{R}^{-1}=%
\mathbf{X}^{T}\left( \mathbf{XX}^{T}\right) ^{-1},
\end{equation*}%
with $\mathbf{XX}_{R}^{-1}=\mathbf{I}_{n}.$ We illustrate the approximation
to a square Vandermonde inverse in the following.

\begin{example}[Vandermonde inverse for large $N$]
\label{VanderInvex}In Table \ref{PolysTable}, we illustrate the Monte Carlo
simulations for a square Vandermonde matrix inversion based on $L=1000000$
realizations from a $\mathcal{N}_{p}(\mathbf{0},\mathbf{\Sigma }),$
distribution with $\mathbf{\Sigma }=\mathbf{\Gamma }_{N}\mathbf{\Gamma }%
_{N}^{T}$. We applied the approximation for $N=5,$ $10,$ $50,$ $100,$ $200,$
and $500$, using the driving sequence $\gamma _{k}=\frac{1}{k+2},$ $k\in
B_{N}=\{0,1,2,...,N-1\}$, in all cases. In order to verify the performance
of the Monte Carlo simulation, we present the Frobenius norm $d_{1}$ between
$\mathbf{S}$ (the estimator of the variance-covariance matrix), and the true
matrix $\mathbf{\Gamma }_{N}\mathbf{\Gamma }_{N}^{T}$, based on Equation (%
\ref{SampleVar}). We also present the Frobenius norm $d_{2}$ between $%
\mathbf{\Gamma }_{N}$ and its estimate based on Equation (\ref{VandermondeMC}%
) given by $\mathbf{\Gamma }_{N}\mathbf{\Gamma }_{N}^{+}\mathbf{\Gamma }%
_{N}, $ with $\mathbf{\Gamma }_{N}^{+}=\mathbf{\Gamma }_{N}^{T}\mathbf{S}%
^{+},$ where we used the MPGI $\mathbf{S}^{+}$ to avoid any problems with
computer calculations. Clearly, even as $N$ increases the approximation
performs well, however, when we take the inverse $\mathbf{S}^{+}$ we still
encountered issues with machine arithmetic. One of the major advantages of
Monte Carlo simulation is that increasing $L$ alleviates these issues,
albeit at great computational cost and execution time, which currently makes
this approach impractical for $N>1000$.
\end{example}

\begin{table}[tbp]
\centering{\
\begin{tabularx}{\textwidth}{|X|X|X|X|X|}
\hline
$N$ & $d_1$& $d_2$ & Run Time  \\ \hline
5 & 0.009220962 & 0.007652776 & 0.75 seconds \\\hline
10 & 0.008592731& 0.009728198 & 13.87 seconds \\\hline
50 & 0.08639514& 0.02522218 & 343.11 seconds \\\hline
100 & 0.09686994 & 0.02779924 & 1636.69 seconds \\\hline
200 & 0.3155341& 0.03177035 & 5151.87 seconds \\\hline
500 & 0.1666946& 0.05948064 & 152938.30 seconds \\\hline
\end{tabularx}
}
\caption{Monte Carlo simulations for an $N\times N$ Vandermonde matrix
inversion. See the text for details and discussion.}
\label{PolysTable}
\end{table}

\section{Applications}

\label{Applications}In what follows, we present general applications of the
DTT and its inverse to the mathematical sciences.

\subsection{DTT based encoding example}

Cryptography refers to the practice and study of techniques for secure
communication between a sender (Alice) and a receiver (Bob), while an
adversary is eavesdropping (Eve). For reviews on classic methods, as well as
recent developments see \cite{biggs2008codes}, \cite{zheng2022modern}, \cite%
{salami2023cryptographic}, \cite{imran2024quantum}, \cite%
{sasikumar2024comprehensive}, and the references therein. We will propose a
general method based on the DTT, that does not use standard approaches like
encryption and decryption based on public or private keys (although standard
methods can still be applied for extra security). The closest existing
method to the proposed obfuscation method, is the DFT (see \cite%
{massey1998discrete}, \cite{vaudenay1999security}, \cite{chan2004block},
\cite{roche2008provable}, \cite{salami2023cryptographic}, and the
Reed-Solomon codes \cite{reed1960polynomial}), and it served as motivation
for this example, since the DFT is a special case of the DTT.

In order to avoid computer overflows, it is recommended that the reader uses
uniformly bounded sequences; more precisely, choosing a sequence such as $%
\gamma _{k}=(k+1)^{2}$ or $e^{k},$ $k=0,1,2,...,N-1$, will rapidly lead to
an overflow as $N$ increases, even in state of art machines.

Now assume that Alice wishes to send Bob the message Msg="Math rules!", with
$N=11$, while Eve tries to intercept and decode it. We assume that Alice and
Bob have hidden knowledge of the list of symbols that may be transmitted,
with Alice and Bob aware of all the sequences $\mathbf{\gamma }_{1},...,%
\mathbf{\gamma }_{M},$ that can be used to obtain the DTT of the message
Msg. Note here that each $\mathbf{\gamma }_{i},$ $i=1,2,...,M$, for any $N>0$
can be obtained offline and disseminated covertly to Alice and Bob, thus
making the time from coding a message, transmitting it and decoding it,
fairly constant.

The index $I$ is the first number transmitted, and it indicates the sequence
$\mathbf{\gamma }_{I}$ that was used in the DTT to code the message before
transmission. Even if Eve retrieves this number, she doesn't know which
sequence this corresponds to, since she has no access to the $\mathbf{\gamma
}$ sequences. As a result, only Bob will know the sequence $\mathbf{\gamma }%
_{I}$ to be used in the DTTMI to decode the message, for $I=1,2,...,M,$ with
larger $M$ increasing the complexity of the process, and making it harder to
decode, since different messages, or even the same message, can be sent
using different $I,$ eliminating any patterns that Eve might be able
identify. Once the initial number is transmitted by Alice (say as a real
number of 8 bytes, i.e., a double of 64bits), it is followed by the DTT
message itself, which consists of a collection of $N$ real numbers (for
simplicity, as $8N$ bytes).

Note here that even if a message is large $(N>100),$ it can be decomposed
into 10 characters at a time, which in most cases of $\mathbf{\gamma }$ will
typically lead to a Vandermonde matrix that is not ill-conditioned; this can
be checked before disseminating the sequences to Alice and Bob, to make sure
that there are no issues with inversion of the corresponding Vandermonde
matrix based on the sequence $\mathbf{\gamma }_{I}$. We will adapt this
approach in the cryptography example that follows. This is not unusual,
since many of the existing algorithms work on encryption/decryption of a
message in blocks, e.g., 64bit blocks (see \cite{salami2023cryptographic}).
When a message length is not a multiple of a chosen block length $l$,
padding can be used, i.e., augment the original message with trailing
characters to fill in the last block to have a fixed length of $l$
characters. Padding in cryptography involves the practice of adding extra
characters to a message before encryption to ensure that its length is a
multiple of the block size $l$. This helps prevent certain types of attacks
and ensures proper encryption and decryption processes in existing methods.
Alternatively, all sequences $\mathbf{\gamma }$ for $N=1,2,..,l\leq 10,$ can
be obtained offline and given covertly to Alice and Bob.

Consider building a collection of symbols and this can be accomplished in
any way desired; the 67 symbols we used were: lower case letters a, b, c, d,
e, f, g, h, i, j, k, l, m, n, o, p, q, r, s, t, u, v, w, x, y, and z; upper
case letters A, B, C, D, E, F, G, H, I, J, K, L, M, N, O, P, Q, R, S, T, U,
V, W, X, Y, Z; digits 0, 1, 2, 3, 4, 5, 6, 7, 8, 9; and special characters
such as the space character " ", comma character ",", period character ".",
the exclamation point "!", and the "at" character "@". We will keep things
simple by assuming that each symbol corresponds to a specific number, but
instead of the integers $1,2,...,67$, the number for each symbol is
uniformly sampled over some set, say $[-100,100]$ (see Table \ref%
{SymbolsTable}). Both Alice and Bob are aware of this hidden information.
When padding is required, it is applied using the @ character. Moreover, the
symbols chosen to be transmitted need not be letters in order to increase
the complexity of the message (e.g., a letter or phrase corresponds to an
unknown symbol such as a$\Longleftrightarrow \bigstar ,$ b$%
\Longleftrightarrow \square ,$ c$\Longleftrightarrow \blacksquare ,$ and so
forth), but we will keep things simple in this example.

\begin{table}[tbp]
\centering{\small
\begin{tabularx}{\textwidth}{|X|X|X|X|X|X|}
\hline
\multicolumn{6}{|c|}{Lower Case Letters}
\\\hline
a=74.09439 &b=4.841049 &c=-92.52354 &d=-69.50676 &e=-67.0486 &f=-47.57668
\\\hline
g=-51.75857 &h=68.35202 &i=-86.18408 &j=-38.46459 &k=-86.29489 &l=-64.16503
\\\hline
m=-84.3902 &n=54.84537 &o=-93.9089 &p=-63.77618 &q=-7.607419 &r=-70.3625
\\\hline
s=22.14637 &t=-1.579375 &u=-64.83118 &v=81.34243 &w=51.28614 &x=-83.69335
\\\hline
y=12.43306 &z=52.06726
\\\hline
\multicolumn{6}{|c|}{Upper Case Letters}
\\\hline
A=13.10272 &B=49.31613 &C=-14.54347 &D=31.76451 &E=86.46889 &F=44.7153
\\\hline
G=-15.96975 &H=-56.64244 &I=39.58658 &J=48.05374 &K=88.08317 &L=51.8521
\\\hline
M=13.4099 &N=-77.66365 &O=4.594217 &P=-51.61168 &Q=6.560883 &R=-94.25149
\\\hline
S=65.81979 &T=66.57866 &U=-71.87194 &V=-42.70737 &W=-34.79449 &X=-74.97178
\\\hline
Y=90.84026 &Z=-24.48978
\\\hline
\multicolumn{6}{|c|}{Digits}
\\\hline
0=14.95815 &1=-82.52232 &2=30.03144 &3=4.244889 &4=-34.88058 &5=16.18243
\\\hline
6=-84.52458 &7=81.14887 &8=43.87123 &9=-80.60847
\\\hline
\multicolumn{6}{|c|}{Other characters}
\\\hline
" " =31.55889 &,=38.17017 &.=-59.30511 &!=18.34118 &@=-57.0745&
 \\\hline
\end{tabularx}
}
\caption{Symbols used in the cryptography example. Each symbol corresponds
to a real number, uniformly sampled over $[-100,100]$. Alice and Bob are
given this table covertly.}
\label{SymbolsTable}
\end{table}

In Table \ref{DTTTable} we present the DTT analysis for the message "Math
rules!", which translates into the vector $\mathbf{f}_{N}=(13.409899,$ $%
74.094391,$ $-1.579375,$ $68.352017,$ $31.558894,$ $-70.362501,$ $%
-64.831183, $ $-64.165032,$ $-67.048597,$ $22.146366,$ $18.341185)$. The
sequences $\mathbf{\gamma }_{i}$, $i=1,2,...,$ $M=5$, along with the DTT
values (vector $\mathbf{g}_{N}$) are also presented. The length of the
message is $N=11$, and for these sequences it leads to well behaved
Vandermonde matrices. The Euclidean norm between the original sequence $%
\mathbf{f}$ and the DTTMI are given by 0.03289056, 1.364248e-05,
5.105436e-11, 2.178117e-11, and 1.890738e-07, so that the original message
is recovered by Bob exactly.

\begin{table}[tbp]
\centering{\small
\begin{tabularx}{\textwidth}{|X|X|X|}
\hline
Sequence& $\mathbf{\gamma }$ &DTT
\\\hline
$1/(k+1),$ \newline$ k=0,1,...,N-1$&
1, 0.5, 0.3333, 0.25,
 0.2, 0.1667, 0.1428, 0.125, 0.1111, 0.1, 0.0909&
-40.08393, 56.66484, 40.4371, 32.93668, 28.73527, 26.04518, 24.17017, 22.7858, 21.72035, 20.87428, 20.18573
\\\hline
$cos(k+2)^2,$\newline$k=0,1,...,N-1$&
0.1731, 0.98, 0.4272, 0.0804, 0.9219, 0.5683, 0.0211, 0.8301, 0.704, 0.000019, 0.712
&26.56439, -22.4514, 49.53783, 19.3983, 17.54337, 62.73893, 14.97844, 53.40165, 68.27403, 13.41135, 68.09797
\\\hline
$cos(k+1),$\newline$k=0,1,...,N-1$&
0.5403, -0.4161, -0.9899, \newline-0.6536,  0.2836,  0.9601, 0.7539, -0.1455, $-0.9111$, -0.839,  0.0044
&60.37636, -21.06286, \newline-97.32555, -44.86914, 35.89, \newline-6.933822, 65.76713, 2.403375, -79.02, -67.046, 13.73779
\\\hline
$sin(k+1),$\newline$k=0,1,...,N-1$&
0.84147,  0.9092,  0.1411, -0.7568, -0.9589, -0.2794,  0.65698,  0.9893, 0.41211, -0.544, $-0.9999$
&50.41492, 24.24, 24.0347, \newline-56.32167, -89.25698, -8.6204, 67.9066, -30.3882, 48.04271, \newline-33.59541, -100.2115
\\\hline
$Unif(-1,1)$&-0.7299,  0.65, 0.5319, -0.3601,  0.6871,  0.02083, 0.176,  0.4292,  0.3353, 0.4078,  0.8938
&-53.19813, 67.67917, 59.62883, -15.82339, 68.39966, 14.95329, 26.79606, 49.7359, 40.62669, 47.62556, 31.57885
 \\\hline
\end{tabularx}
}
\caption{The sequences $\mathbf{\protect\gamma }_{i}$, $i=1,2,...,M=5$,
along with the DTT values for the message "Math rules!" given by the vector $%
\mathbf{f}=(13.409899,$ $74.094391,$ $-1.579375, $ $68.352017,$ $31.558894,$
$-70.362501,$ $-64.831183,$ $-64.165032,$ $-67.048597,$ $22.146366,$ $%
18.341185)$. The length of the message is $N=11$. The Euclidean norm between
the original sequence $\mathbf{f}$ and the DTTMI are given by 0.03289056,
1.364248e-05, 5.105436e-11, 2.178117e-11, and 1.890738e-07.}
\label{DTTTable}
\end{table}

Using a different assignment of real numbers for the symbols of Table \ref%
{SymbolsTable}, and only one hidden sequence $\mathbf{\gamma }$ (none of the
ones presented herein), Eve intercepts the sequence of real numbers (DTT)
given by%
\begin{eqnarray*}
\mathbf{g}_{N} &=&(-1917.959,13245.93,-1781.262,-1277.915,-2136.627,-1401.33,
\\
&&-1982.884,16798.06,-2162.391,-1463.74,6958.445,43926.3, \\
&&1928.851,940.4619,-442.7533,630.6479,2739.06,51192.96, \\
&&4097.392,601.4443,-1258.352,-1728.008,-792.2136).
\end{eqnarray*}%
The cryptography community is invited to recover the true phrase that yields
this DTT, noting that we used blocking, there is no padding, and there is no
initial index $I$ in the transmitted message that Eve intercepts.

\subsection{DTT Computer Graphics}

We present an application of the DTT to image processing and computer
graphics (see \cite{gonzalez2009digital}, \cite{umbaugh2010digital}, and the
references therein), using Leonardo da Vinci's masterpiece portrait, the
Mona Lisa. The original image is converted to gray scale and is originally a
matrix of size $182\times 276=50232$ pixels. Alternatively, one can use the
Red, Green, Blue and Alpha channels separately, apply a TI or DTT filter,
and then recreate the processed image by combining all channels. Further
note that in the gray scale image (single channel) each pixel corresponds to
a value in the interval $[0,1]$, and any operations on a pixel are re-scaled
to values in $[0,1]$.

The original gray scale image is vectorized and converted to a $50232\times
1 $ vector $\mathbf{f}$, i.e., the vectorized image is created by stacking
each column of the original matrix in gray scale. Note that the image is
treated as a matrix with the top-left pixel being the $(1,1)$-element and
the bottom-right the $(182,276)$-element of the matrix. We used the DTT
approach in 5024 blocks, with block sizes of 10, except for the last block
which is of size 2.

In Figure \ref{MonaLisa1} we present from left to right, the original image
in color scale, the original image in gray scale, the DTT transform using
the sequence $\gamma _{k}=1/(k+1)$, $k=0,1,2,...,9$, and the recovered image
via the DTTMI. Note that the recovery from the DTT image is perfect, so that
in the context of the previous example in cryptography, the vector $\mathbf{f%
}$ (original image) can be transmitted by Alice, the DTT (blurred image) can
be intercepted by Eve, but only Bob can recover the original image.

\begin{figure}[t]
\centering \includegraphics[width=0.995\textwidth]{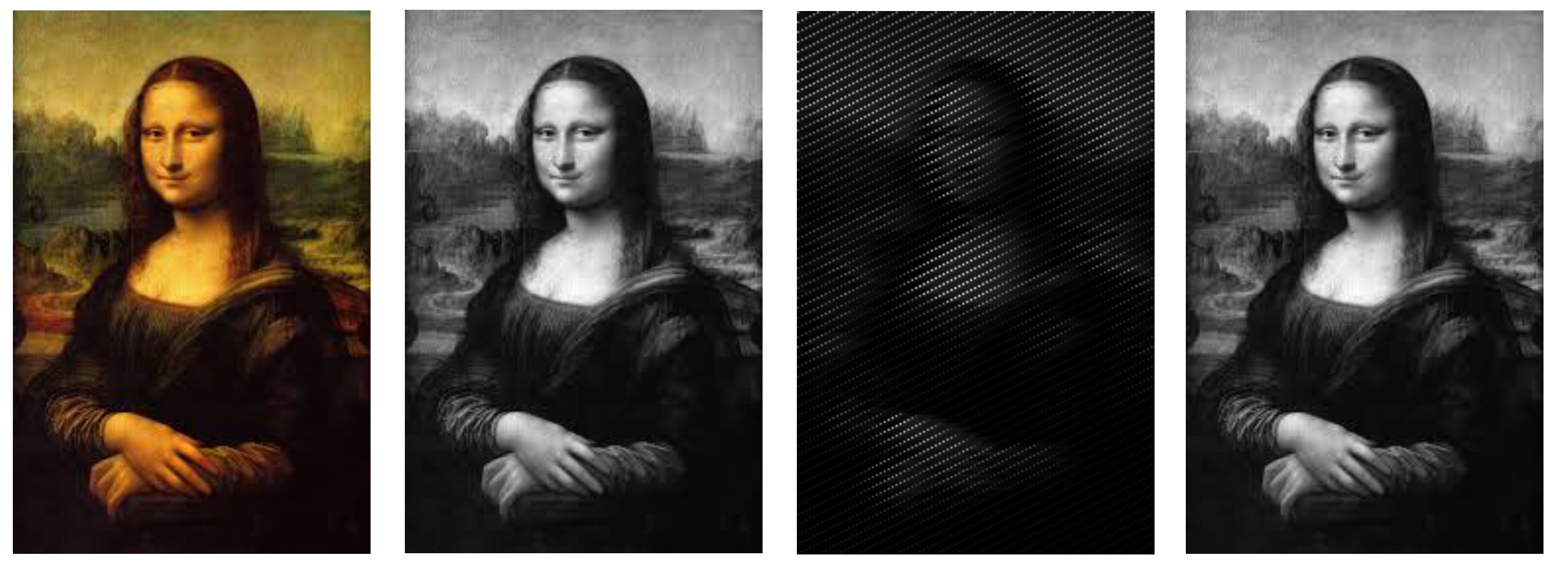}
\caption{Left: The original Mona Lisa. Middle Left: The Mona Lisa in gray
scale. Middle Right: The DTT using the sequence $\protect\gamma _{k}=1/(k+1)$%
, $k=0,1,2,...,9$. Right: The DTTMI recovered image.}
\label{MonaLisa1}
\end{figure}

The DTT can also be used to provide filtering of an image. Filters attempt
to exemplify certain features of a given image or modify images by altering
pixel values based on their neighbors. For example, filtering can be used to
achieve effects such as noise reduction (remove excess variability), edge
detection (boundary identification), bump maps (grayscale texture technique
used in 3d computer graphics to emulate small-scale surface details) and
image enhancement (improve the visual quality for better interpretation). We
illustrate how the DTT can be used to realize some of these concepts.

Recall the DTT form of Equation (\ref{DTTfdrivengamma})$,$ where for two
sequences $f_{n},\gamma _{n}:$ $\mathbb{N}\rightarrow \mathbb{M},$ with $%
\mathbf{f}=(f_{0},f_{1},...)$ and $\mathbf{\gamma }$ $=$ $(\gamma
_{0},\gamma _{1},...),$ $\gamma _{n}\neq 0,$ $\forall n\in B_{k}\in \mathcal{%
B}(\mathbb{N}),$ the DTT of $\mathbf{f}$ driven by the sequence $\mathbf{%
\gamma },$ is given by
\begin{equation*}
g_{k}(t)=\mathcal{T}_{\gamma _{k},B}^{f_{n}}(t)=\frac{1}{t}\sum\limits_{n\in
B_{k}}f_{n}\frac{\gamma _{k}^{n}}{t^{n}},
\end{equation*}%
$\forall k\in \mathbb{N},$ and take a real $t\neq 0.$ Here $\mathbf{f}$
plays the role of a vectorized image and $\mathbf{\gamma }$\ the driving
sequence describing a desired operation on the pixels $\mathbf{f}$, while
the created sequence $\mathbf{g}(t)=(g_{0}(t),g_{1}(t),...),$ corresponds to
the filtered image. The set $B_{k}$ plays the role of a neighborhood system,
and indicates which pixels of $\mathbf{f}$ (weighed by the $k^{th}$ element
of the driving sequence $\gamma _{k}),$ are used in order to create the new
filtered image $\mathbf{g}(t)$. Note that $B_{k}$ depends on the specific
pixel $k$ of the resulting image $\mathbf{g}(t)$.

Depending on the application context, we may desire coefficients that depend
on both the output index $k$ and input index $n$, and therefore we consider
a matrix-valued generalization of the original DTT, abbreviated as MVDTT. In
particular, define a two-index kernel by%
\begin{equation*}
K_{k,n}(t):=a_{k,n}\frac{\gamma _{k}^{n}}{t^{n+1}},
\end{equation*}%
such that%
\begin{equation*}
g_{k}(t)=\sum_{n\in B_{k}}K_{k,n}(t)f_{n}=\sum_{n\in B_{k}}f_{n}a_{k,n}\frac{%
\gamma _{k}^{n}}{t^{n+1}},
\end{equation*}%
and as a result, this generalization of the DTT is highly amenable to
changes, since we can choose the weights $a_{k,n}$ appropriately depending
on the application at hand.

Since images are finite, say of dimension $n_{1}\times n_{2},$ we will
assume that $\mathbf{f}$ is a fixed length vector of size $N\times 1$, with $%
N=n_{1}n_{2}$. In what follows, we will consider the standard 8-neighborhood
system, where the $(i,j)$-pixel value of an image is affected by its
neighbors. In particular, the indices for the neighborhood about the $(i,j)$%
-pixel, which corresponds to the $k=jn_{1}+i$ element of the vectorized
image, are given by
\begin{equation*}
\begin{tabular}{|l|l|l|}
\hline
$(i-1,j-1)$ & $(i-1,j)$ & $(i-1,j+1)$ \\ \hline
$(i,j-1)$ & $(i,j)$ & $(i,j+1)$ \\ \hline
$(i+1,j-1)$ & $(i+1,j)$ & $(i+1,j+1)$ \\ \hline
\end{tabular}%
\mapsto
\begin{tabular}{|l|l|l|}
\hline
$k-n_{1}-1$ & $k-1$ & $k+n_{1}-1$ \\ \hline
$k-n_{1}$ & $k$ & $k+n_{1}$ \\ \hline
$k-n_{1}+1$ & $k+1$ & $k+n_{1}+1$ \\ \hline
\end{tabular}%
.
\end{equation*}%
As a result, in order to apply a filter in the neighborhood of the $(i,j)$%
-pixel we select the set%
\begin{eqnarray*}
B_{k} &=&\{k-n_{1}-1,k-1,k+n_{1}-1,k-n_{1},k,k+n_{1}, \\
&&k-n_{1}+1,k+1,k+n_{1}+1\},
\end{eqnarray*}%
for all $k=1,2,...,N$, with appropriate changes at the boundaries of the
image. Thus the $k^{th}$ pixel of the filtered image is created using%
\begin{eqnarray*}
g_{k}(t) &=&\frac{1}{t}(f_{k-n_{1}-1}\frac{\left( \gamma _{k}\right)
^{k-n_{1}-1}}{t^{k-n_{1}-1}}+f_{k-1}\frac{\left( \gamma _{k}\right) ^{k-1}}{%
t^{k-1}}+f_{k+n_{1}-1}\frac{\left( \gamma _{k}\right) ^{k+n_{1}-1}}{%
t^{k+n_{1}-1}} \\
&&+f_{k-n_{1}}\frac{\left( \gamma _{k}\right) ^{k-n_{1}}}{t^{k-n_{1}}}+f_{k}%
\frac{\left( \gamma _{k}\right) ^{k}}{t^{k}}+f_{k+n_{1}}\frac{\left( \gamma
_{k}\right) ^{k+n_{1}}}{t^{k+n_{1}}} \\
&&+f_{k-n_{1}+1}\frac{\left( \gamma _{k}\right) ^{k-n_{1}+1}}{t^{k-n_{1}+1}}%
+f_{k+1}\frac{\left( \gamma _{k}\right) ^{k+1}}{t^{k+1}}+f_{k+n_{1}+1}\frac{%
\left( \gamma _{k}\right) ^{k+n_{1}+1}}{t^{k+n_{1}+1}}),
\end{eqnarray*}%
so that the kernel $K_{k,n}(t)=a_{k,n}\frac{\gamma _{k}^{n}}{t^{n+1}}$, in
this case has the special form $a_{k,n}=1,$ i.e., the weights $a_{k,n}$ are
independent of $k$ and $n$.

Now assume we wish to apply specific, fixed weights to the neighbors of a
pixel, say the matrix%
\begin{equation*}
\mathbf{A}=\left[
\begin{tabular}{lll}
$a_{11}$ & $a_{12}$ & $a_{13}$ \\
$a_{21}$ & $a_{22}$ & $a_{23}$ \\
$a_{31}$ & $a_{32}$ & $a_{33}$%
\end{tabular}%
\right] ,
\end{equation*}%
with $a_{ij}\in \Re ,$ $i,j=1,2,3,$ such that%
\begin{eqnarray*}
g_{k}(t) &=&\frac{1}{t}(f_{k-n_{1}-1}a_{11}\frac{\left( \gamma _{k}\right)
^{k-n_{1}-1}}{t^{k-n_{1}-1}}+f_{k-1}a_{12}\frac{\left( \gamma _{k}\right)
^{k-1}}{t^{k-1}}+f_{k+n_{1}-1}a_{13}\frac{\left( \gamma _{k}\right)
^{k+n_{1}-1}}{t^{k+n_{1}-1}} \\
&&+f_{k-n_{1}}a_{21}\frac{\left( \gamma _{k}\right) ^{k-n_{1}}}{t^{k-n_{1}}}%
+f_{k}a_{22}\frac{\left( \gamma _{k}\right) ^{k}}{t^{k}}+f_{k+n_{1}}a_{23}%
\frac{\left( \gamma _{k}\right) ^{k+n_{1}}}{t^{k+n_{1}}} \\
&&+f_{k-n_{1}+1}a_{31}\frac{\left( \gamma _{k}\right) ^{k-n_{1}+1}}{%
t^{k-n_{1}+1}}+f_{k+1}a_{32}\frac{\left( \gamma _{k}\right) ^{k+1}}{t^{k+1}}%
+f_{k+n_{1}+1}a_{33}\frac{\left( \gamma _{k}\right) ^{k+n_{1}+1}}{%
t^{k+n_{1}+1}}).
\end{eqnarray*}%
Owing to the flexibility offered by the MVDTT, this construction can be
accomplished by any driving sequence $\mathbf{\gamma }$ and choice of matrix
$\mathbf{A}$. Well known filters used for edge detection, such as the Sobel,
Scharr and Prewitt filters, are now easily implemented by the MVDTT using
the matrices%
\begin{equation*}
\mathbf{A}_{Sobel}=\left[
\begin{tabular}{lll}
$-1$ & $0$ & $1$ \\
$-2$ & $0$ & $2$ \\
$-1$ & $0$ & $1$%
\end{tabular}%
\right] ,\text{ }\mathbf{A}_{Scharr}=\left[
\begin{tabular}{lll}
$-3$ & $0$ & $3$ \\
$-10$ & $0$ & $10$ \\
$-3$ & $0$ & $3$%
\end{tabular}%
\right] ,\text{ and }\mathbf{A}_{Prewitt}=\left[
\begin{tabular}{lll}
$-1$ & $0$ & $1$ \\
$-1$ & $0$ & $1$ \\
$-1$ & $0$ & $1$%
\end{tabular}%
\right] ,
\end{equation*}%
respectively.

In Figure \ref{MonaLisa2} we present various MVDTT image processing results
for the Mona Lisa. The first row presents the results of the Sobel, Scharr
and Prewitt edge detection filters, along with the filtered image based on
the matrix $\mathbf{A}_{bump}=\left[
\begin{tabular}{rrr}
$-10$ & $0$ & $10$ \\
$0$ & $1$ & $0$ \\
$-10$ & $0$ & $10$%
\end{tabular}%
\right] $. The second row illustrates the blended images with pixels $%
b_{(i,j)}$ based on the original gray scale Mona Lisa with pixels $o_{(i,j)}$
and the filtered images with pixels $f_{(i,j)}$, using a weight of $w=0.7$,
i.e., the $(i,j)$-pixel in the blended image is created as the weighted
average $b_{(i,j)}=wo_{(i,j)}+(1-w)f_{(i,j)}.$ Note that this technique is
used to emulate a 3d perception to the original image, including surface
details such as bumps, wrinkles, or scratches.

\begin{figure}[t]
\centering \includegraphics[width=0.995\textwidth]{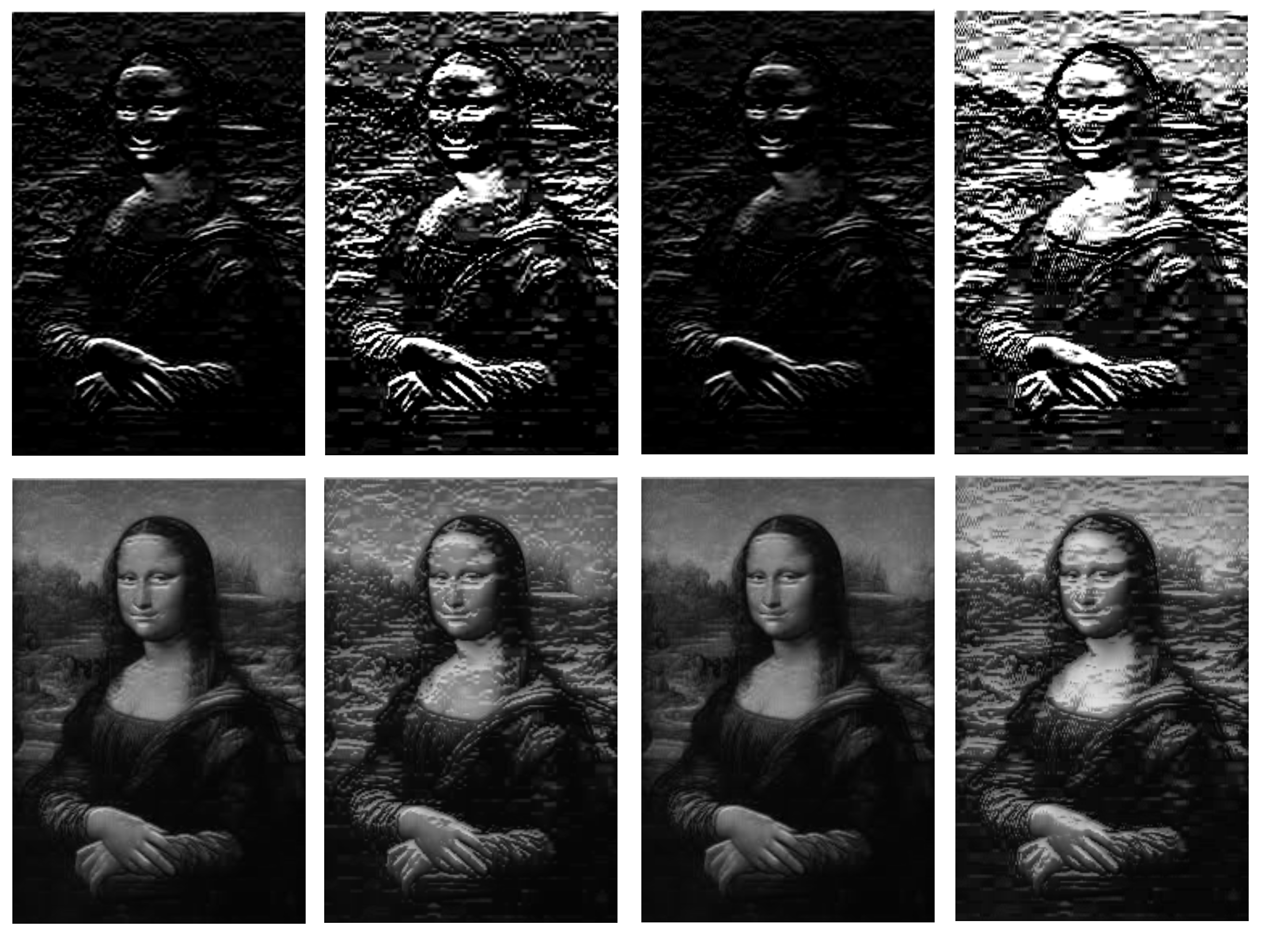}
\caption{First Row: Applications of the Sobel, Scharr and Prewitt edge
detection filters, along with the filtered image based on the matrix $%
\mathbf{A}_{bump}$.\newline
Second Row: Blended images using the original image and the filtered images
above. See the text for more details.}
\label{MonaLisa2}
\end{figure}

We illustrate MVDTT noise, blurring and smooth effects in Figure \ref%
{MonaLisa3}. Noise is added using a matrix that depends on the pixel
position, i.e., the matrix $\mathbf{A}^{(k)}$\ with zero elements except for
$a_{22}^{(k)}=1+X_{k},$ where $X_{k}\thicksim \mathcal{N}(0,\sigma ^{2}),$ $%
k=1,2,...,N$, a Gaussian error for a small $\sigma >0$ (in this example $%
\sigma =0.5$). Smoothing is accomplished by averaging all pixel values in
the neighborhood of the $(i,j)$-pixel using the matrix $\mathbf{A}$\ with $%
a_{ij}=1,$ $i,j=1,2,3$.

\begin{figure}[t]
\centering \includegraphics[width=0.995\textwidth]{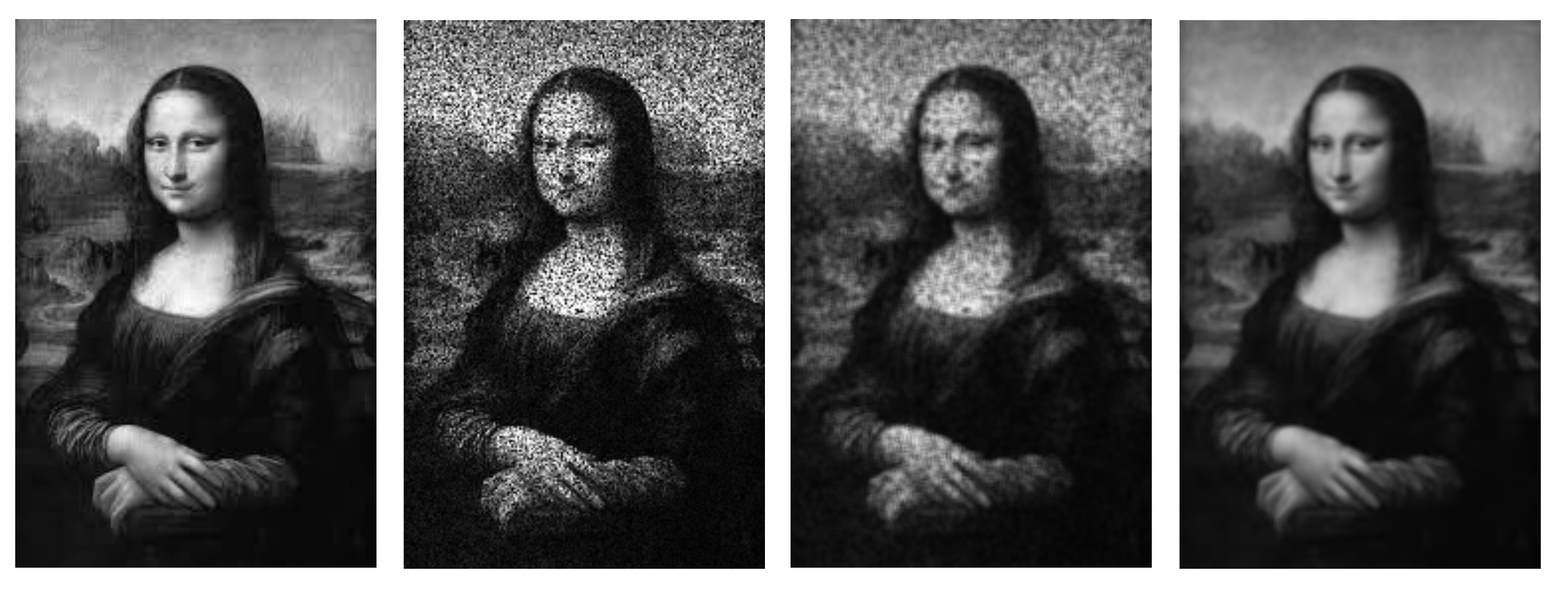}
\caption{Left: The original Mona Lisa. Middle Left: Distorted image using
Gaussian noise with mean 0 and standard deviation $\protect\sigma =0.5$.
Middle Right: Smoothing of the distorted image. Right: Smoothing of the
original Mona Lisa.}
\label{MonaLisa3}
\end{figure}

\section{Concluding Remarks}

Motivated by the Taylor measure, we introduced and studied properties of a
new integral, that emerges as a general framework that contains the finite
DFT as a particular choice of Taylor weights and nodes. The integral
provides a unifying framework, containing as special cases many important
concepts in mathematics.

We have only presented but a few applications of the DTT. Since the DFT has
many applications in all of the mathematical sciences, we will explore in
the future applying the DTT in those areas and investigate how we can
generalize those applications via the new method. Theoretical and
computational results are also being considered for the continuous version
of the TI\ and DTT.

Whereas the DFT takes advantage of a periodic $\mathbf{\gamma },$ we did not
discuss this case in this introductory paper. Assuming periodic driving
sequences will lead to the generalization of important applications, such as
time series analysis, signal processing and wavelets. In addition, we will
investigate driving sequences $\mathbf{\gamma }$ that will lead to fast DTT
versions, similarly to the fast Fourier transform. These results are of
great interest, and will be presented elsewhere.

\section*{Statements and Declarations}

\begin{itemize}
\item Conflict of Interest: The author declares no conflict of interest
regarding this paper.

\item Funding statement: This research received no external funding.

\item Acknowledgement: Not applicable.
\end{itemize}

\bibliographystyle{plainnat}
\bibliography{TaylorIntegral}

\end{document}